\newcommand{\jap}[1]{\left\langle #1 \right\rangle}
\def\ff{{\widehat f}}
\def\eps{\varepsilon}
\def\e{{\rm e}}
\def\C{{\mathbb C}}
\def\dd{{\rm d}}
\def\ddt{{\frac{\dd}{\dd t}}}
\def\R {\mathbb{R}}
\def\ZZ {\mathbb{Z}}
\def\Re{{\rm Re}}
\def\Im{{\rm Im}}
\def \l {\langle}
\def \r {\rangle}
\def\T {{\mathbb T}}
\def\de{{\partial}}
\newcommand\uu {\boldsymbol{u}}
\newcommand\vv {\boldsymbol{v}}
\def\gr {\boldsymbol{g}}
\def\bb {\beta}
\def\QQ{Q}
\def\RR{R}
\definecolor{orange}{rgb}{1.0, 0.49, 0.0}
\definecolor{green}{rgb}{0.0, 0.5, 0.0}
\definecolor{brown}{rgb}{0.43, 0.21, 0.1}
\newcommand{\dt}{\partial_t}
\newcommand{\dy}{\partial_y}
\newcommand{\dX}{\partial_X}
\newcommand{\dY}{\partial_Y}
\newcommand{\dXX}{\partial_{XX}}
\newcommand{\norma}[2]{\left\lVert #1 \right\rVert_{#2}}
\newcommand{\norm}[1]{\left\lVert #1 \right\rVert}
\newtheorem{proposition}{Proposition}[section]
\newtheorem{theorem}[proposition]{Theorem}
\newtheorem{corollary}[proposition]{Corollary}
\newtheorem{lemma}[proposition]{Lemma}
\theoremstyle{definition}
\newtheorem{remark}[proposition]{Remark}
\numberwithin{equation}{section}
\title[Linear inviscid damping for stratified fluids]{Linear inviscid damping for shear flows near Couette \\
in the 2D stably stratified regime}
\author[R. Bianchini]{Roberta Bianchini}
\address{IAC, Consiglio Nazionale delle Ricerche, 00185 Rome, Italy}
\email{r.bianchini@iac.cnr.it}
\author[M. Coti Zelati]{Michele Coti Zelati}
\address{Department of Mathematics, Imperial College London, London, SW7 2AZ, UK}
\email{m.coti-zelati@imperial.ac.uk}
\author[M. Dolce]{Michele Dolce}
\address{GSSI - Gran Sasso Science Institute, 67100, L'Aquila, Italy}
\email{michele.dolce@gssi.it}
\keywords{Inviscid damping, stratified fluids, Boussinesq approximation, mixing}
\subjclass[2000]{35Q35, 76F10}
\begin{document}



\maketitle

\begin{abstract}
	We investigate the linear stability of shears near the Couette flow for a class of 2D incompressible stably stratified fluids. Our main result
	consists of nearly optimal decay rates for perturbations of stationary states whose velocities are monotone 
	shear flows $(U(y),0)$ and have an exponential density profile. In the case of the Couette flow $U(y)=y$, 
	we recover the rates predicted by Hartman in 1975, by adopting an explicit point-wise approach in frequency space. As a by-product, this implies
	optimal decay rates as well as Lyapunov instability in $L^2$ for the vorticity. For the previously unexplored 
	case of more general shear flows close to Couette, the inviscid damping results follow by a weighted energy estimate. Each outcome concerning the stably stratified regime applies to the Boussinesq equations as well.
	Remarkably, our results hold under the celebrated \emph{Miles-Howard criterion} for stratified fluids.
\end{abstract}

\tableofcontents

\section{Introduction}
This article is concerned with the linear asymptotic stability of shear flows close to Couette for a class of two-dimensional incompressible stably stratified fluids without viscosity. Stability of \emph{laminar equilibria}, to which the Couette flow with velocity vector $(y,0)$ belongs, is a widely investigated problem in the general context of hydrodynamic stability, a field where several major open questions still remain unanswered. We refer to \cite{Gallay, BGM18} for two highlighting reviews on the topic. The stability results provided here are explicitly quantified in terms of decay rates due to the phenomenon known as \emph{inviscid damping}, caused by vorticity mixing. A brief review on the existing literature of linear and nonlinear inviscid damping is presented below.

As pointed out later on, from the rigorous mathematical viewpoint, the majority of inviscid damping results belongs to the framework of homogeneous fluids, namely incompressible flows with constant density (and zero viscosity, see \cite{BGM18} for a discussion about that). On the other hand, the physics literature on the stability of non-homogeneous fluids started several decades ago and provided one of the first remarkable answers in the early 60's, with a criterion for spectral stability of stratified flows due to Miles \cite{Miles61} and Howard \cite{Howard61}. The mathematical picture on the stability of shear flows for variable density fluids is therefore far from being complete and only \emph{linear} stability results are available up to now
\cites{ADM20,Lin18} . 

A relevant class of fluids with variable density is represented by the non-homogeneous incompressible Euler equations 
\begin{equation}\label{eq:nonlin}
	\begin{aligned}
	 \rho(\de_t+\uu\cdot\nabla)\uu+\nabla  p&=-\rho \, \gr,\\
	(\de_t+\uu\cdot\nabla)\rho&=0,\\
	\nabla\cdot \uu&=0, 
	\end{aligned}
\end{equation}
where $\uu=(u^x, u^y) $ is the velocity field, $\rho$ is the density, $p$ is the pressure and  $\gr=(0, \mathfrak{g})$ is (minus) gravity. Models of incompressible fluids have a large range of application, in particular in oceanography, see \cite{Rieutord}, where both incompressibility and absence of viscosity are very good approximations of reality. 

The system under investigation in this paper belongs exactly to this framework. On the spatial domain $(x,y)\in \T\times \R$ (a periodic channel), we consider 
\begin{equation}\label{eq:EulerBomega}
	\begin{aligned}
	(\de_t + U(y)\de_x)(\omega -\beta\dy \psi ) -(U''(y)-\beta U'(y))\de_x\psi&=-\RR\de_xq,\\
	(\de_t+ U(y)\de_x)q&=\de_x\psi,\\
	\Delta\psi &=\omega,
	\end{aligned}
\end{equation}
endowed with the suitable initial data
\begin{align}
\omega(0,x,y)=\omega_{in}(x,y),\qquad q(0,x,y)=q_{in}(x,y).
\end{align}
The above linear system is written in terms of the unknowns $\omega, \psi$ and $q$, which represent perturbations 
of a vorticity, a streamfunction and a (scaled) density, respectively. In particular, the velocity vector 
$(v^x,v^y)=\nabla^\perp \psi=(-\de_y\psi,\de_x\psi)$ is recovered via the Biot-Savart law
\begin{align}\label{eq:biot}
v^x=-\de_y\Delta^{-1}\omega,\qquad v^y=\de_x\Delta^{-1}\omega.
\end{align}
Moreover, $R>0$ and $\beta \geq 0$ are \emph{a priori} two constant and independent parameters. 
As explained in the subsequent Sections \ref{sub-1.1}-\ref{sub-1.2}, the potential interdependence of the constants $R$ and $\beta$ establishes the distinction between two different physically relevant models: the case of shear flows in the Boussinesq approximation 
(for $\beta=0$ and $R>1/4$) and the case of an exponentially stratified fluid (for $\beta>0$ and $R=\beta\mathfrak{g} >1/4$).

\subsection{Linear inviscid damping and main results} 
Since the $x$-average is a conserved quantity for the dynamics of \eqref{eq:EulerBomega}, it is convenient to introduce the notation
\begin{align}
\l f\r_x=\frac{1}{2\pi}\int_{\mathbb{T}}f(x,y)\dd x
\end{align}
to denote the $x$-average of a function $f:\T\times\R\to\R$. We prove a quantitative asymptotic stability result when
the background shear is close to the Couette flow $U(y)=y$. The main result of this paper reads as follows.
\begin{theorem}\label{thm:CloseCouette}
	Let $R>1/4$ and $\beta\geq0$ be arbitrarily fixed.  There exist $C_0>1$ and a small constant $\eps_0=\eps_0(\beta,R)$ with the following property. If
	$\eps\in(0,\eps_0]$ and
	\begin{align}\label{eq:closetoC}
	\| U'-1\|_{H^6}+\| U''\|_{H^5}\leq \eps,
	\end{align}
	then for every $t\geq0$ we have
	\begin{align}
	\norm{q(t)-\l q\r_x}_{L^2}+\norm{v^x(t)-\l v^x\r_x}_{L^2}\lesssim&\frac{1}{\l t\r^{\frac12-\delta_\eps}}\left(\norm{\omega_{in}-\l\omega_{in}\r_x}_{H^\frac12}+\norm{q_{in}-\l q_{in}\r_x}_{H^{\frac32}}\right),\label{eq:main1}\\
	\norm{v^y(t)}_{L^2}\lesssim& \frac{1}{\langle t \rangle^{\frac32-\delta_\eps}}\left(\norm{\omega_{in}-\l\omega_{in}\r_x}_{H^{\frac32}}+\norm{q_{in}-\l q_{in}\r_x}_{H^{\frac52}}\right),\label{eq:main2}
	\end{align}
where $\delta_\eps=C_0\eps$.
\end{theorem}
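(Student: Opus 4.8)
The plan is to reduce the problem to a one-dimensional evolution frequency-by-frequency in $x$, move to coordinates travelling with the shear, and close a weighted energy estimate whose weights simultaneously encode the mixing-driven decay and the coercivity furnished by the Miles--Howard condition $\RR>1/4$.

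First, since $\l\cdot\r_x$ is conserved it suffices to bound the nonzero-$x$-frequency part; applying the Fourier transform in $x$ turns \eqref{eq:EulerBomega} into a family of evolutions in $(t,y)$ indexed by $k\in\ZZ\setminus\{0\}$, with transport speed $U(y)$. I would work with the potential vorticity $P=\omega-\beta\dy\psi$, so that $\psi=(\Delta-\beta\dy)^{-1}P$ and the system becomes $(\dt+U\dx)P=(U''-\beta U')\dx\psi-\RR\dx q$ and $(\dt+U\dx)q=\dx\psi$. Next, using $U'\geq1-\eps>0$, I change the spatial variable to $z=U(y)$, which renders the transport operator $\dt+z\dx$ and turns $(\Delta-\beta\dy)^{-1}$ into a variable-coefficient elliptic operator that is an $O(\eps)$ perturbation of the Couette one; then I pass to the unmixed profile variables (conjugation by the phase $\e^{\mathrm{i}ktz}$), after which the transport term drops out and the Biot--Savart operator acquires the explicit time-dependent symbol whose inverse gains a factor $\jap{kt-kz}^{-1}$, the mechanism behind inviscid damping.

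The core of the argument is the choice of the energy functional, modelled on the explicit point-wise analysis of the exact Couette case carried out earlier in the paper. I would take
\[
E(t)=\sum_{k\neq0}\int\Big(A(t,k,z)\,|\hat P_k|^2+B(t,k,z)\,|\partial_z\hat q_k|^2+\RR\,C(t,k,z)\,|\hat q_k|^2+(\text{cross terms})\Big)\dd z,
\]
where the weights carry a factor of order $\jap{kt-kz}^{\gamma}$ with $\gamma$ close to $1/2$ --- the power being dictated by the characteristic exponents $-\tfrac12\pm\mathrm{i}\sqrt{\RR-\tfrac14}$ of the limiting ODE --- together with a cross term between $P$ and $q$ tuned so that, precisely because $\RR>1/4$, the quadratic form is positive definite; this is where Miles--Howard is used. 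I expect the energy to also involve one or two extra $z$-derivatives, needed to absorb the commutators generated by the variable coefficients. Differentiating $E$ in time, the leading terms cancel by the symmetric structure of the $(P,q)$ system, the time derivative of the weights produces a nonpositive contribution that yields the decay, and all remaining terms carry either a factor $U''-\beta U'$, $U'-1$, or a coefficient commutator, hence are $O(\eps)$ and are absorbed: this gives $\tfrac{\dd}{\dd t}E\lesssim \eps\jap{t}^{-1}E$, so $E(t)\lesssim\jap{t}^{C_0\eps}E(0)$, which is the source of the loss $\delta_\eps=C_0\eps$.

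Finally, I would unwind the changes of variables: writing $v^y=\dx\psi$ and $v^x-\l v^x\r_x=-\dy\psi$, and using that in the moving frame $\psi$ is recovered from $P$ by an elliptic operator gaining $\jap{kt-kz}^{-1}$ (and $\jap{kt-kz}^{-2}$ for $v^y$ because of the extra $\dx$), the bound on $E$ translates into \eqref{eq:main1}--\eqref{eq:main2}, with the stated Sobolev indices on the data reflecting how many powers of $k$ are spent to manufacture the time-decay factors. The hard part will be the design of the weight in the third step: making the coercivity inherited from $\RR>1/4$ coexist with the sharp decay rate, and checking that every perturbative term genuinely closes with only an $\eps$-loss --- in particular the commutators between the mixing phase, the $z$-derivatives, and the variable-coefficient Biot--Savart operator, which is exactly where the regularity in \eqref{eq:closetoC} is consumed.
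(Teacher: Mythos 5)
Your outline reproduces the paper's strategy essentially verbatim: pass to the potential vorticity $\theta=\omega-\beta\partial_y\psi$, change to moving-frame coordinates $(X,Y)=(x-U(y)t,U(y))$, symmetrize via the factors $p^{\mp1/4}$ (your $\jap{kt-kz}^{\mp 1/2}$-type weights) on $\theta$ and $q$, form a weighted energy with a cross term whose positive-definiteness is exactly the Miles--Howard condition $R>1/4$, then absorb the errors into the time-derivative of an auxiliary weight and Gr\"onwall, producing the $\jap{t}^{C_0\eps}$ loss.

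One accounting imprecision is worth flagging: you write that ``all remaining terms carry either a factor $U''-\beta U'$, $U'-1$, or a coefficient commutator, hence are $O(\eps)$.'' This is not so, since $U''-\beta U'\approx -\beta$ and the operator $B_L-1$ carries a factor $O(\beta)$, both of order one when $\beta>0$ is fixed. Likewise, the term $(p'p^{-1/2})'$ generated by differentiating the cross term carries no $\eps$ at all. In the paper these non-small contributions are not absorbed by the $\eps$-budget but by a \emph{second}, uniformly bounded ghost weight $m_1$ with $m_1'/m_1=C_\beta |k|^2/p$ (equivalently, a bounded Gr\"onwall factor after integrating the time-integrable error $\sim k^2/p$), with $C_\beta\sim 1+\beta^2$ chosen large enough to swallow all the $\beta$-dependent coefficients; only the genuinely $O(\eps)$ terms require the non-integrable weight with $w'/w\sim|p'|/p$, which is what produces $\delta_\eps=C_0\eps$. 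Your ``nonpositive contribution from the weight time-derivative'' is the right mechanism, but the weight must be split into these two pieces with distinct purposes; as written, your final inequality $\tfrac{\dd}{\dd t}E\lesssim\eps\jap{t}^{-1}E$ would only hold once the $O(\beta)$ and $(p'p^{-1/2})'$ terms have already been absorbed into the bounded part of the weight.
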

Assumption \eqref{eq:closetoC} is a quantification of how close the background shear is to the linear profile given by the Couette flow $U(y)=y$, which
corresponds to $\eps=0$.
Theorem \ref{thm:CloseCouette} is a consequence of more general stability estimates in $H^s$, for any $s\geq0$ (see 
Theorems \ref{thm:Couette-main} and \ref{thm:Close-main} below). It is worth mentioning here that the constants hidden in the symbol 
$\lesssim$ in \eqref{eq:main1}-\eqref{eq:main2} blow up exponentially fast as $R\to1/4$.

The decay estimates \eqref{eq:main1}-\eqref{eq:main2} are an example of \emph{inviscid damping}. This terminology, introduced in 
analogy with Landau damping \cite{MV11}, refers to a stability mechanism due to vorticity mixing. Studied since
Kelvin \cite{Kelvin87} and Orr \cite{Orr07}, the physics literature is quite vast (see \cites{DrazinReid81,Yaglom12,SH01,Drazin02} 
and references therein). Of interest to the case
treated here, we  mention that the mixing phenomenon behind inviscid damping also appears in stratified shear flows \cites{Majda03,camassa_viotti_2013}.

In the case of the 2D Euler equations with constant density, the \emph{nonlinear} asymptotic stability of the Couette flow
was obtained in \cite{BM15}, and much more recently the cases of monotone shears 
\cites{IJ2018,MZ20,IJ20} and the point-vortex \cite{IJ2019} were treated. In the linearized setting, more general flows can be analyzed \cites{BCZV17,WZZ18,WZZ17,WZZkolmo17,WZZ15,JIA19,CZZ18,Zcirc17,Zillinger14,Zillinger15,Zillinger19,GNRS18}. 

However, in the density-dependent case, vorticity mixing is not the only relevant effect. This is clarified in the special 
case of the Couette flow $U(y)=y$, where a  proof which is \emph{point-wise} in frequency space can be obtained, 
and gives the following instability result.

\begin{corollary}
	\label{cor:Lyinst}
Let $U(y)=y$ in \eqref{eq:EulerBomega}, and assume that $R>1/4$ and $\beta\geq0$. Then we have the lower bound
\begin{equation}
\label{bd:Lyomega}
\norm{\omega(t) -\l \omega\r_x}_{L^2}+\norm{\nabla q(t)-\l \nabla q\r_x}_{L^2} \gtrsim\langle t \rangle^{\frac12}\left[\norm{\omega_{in}-\l \omega_{in}\r_x}_{L^{2}_xH^{-1}_y}+\norm{q_{in}-\l q_{in}\r_x}_{H^{1}_xL^{2}_y}\right]
\end{equation}
and the upper bound
\begin{equation}
\norm{\omega(t)-\l \omega\r_x}_{L^2}+\norm{\nabla q(t)-\l \nabla q\r_x}_{L^2}\lesssim
\langle t \rangle^{\frac12}\left[\norm{\omega_{in}-\l \omega_{in}\r_x}_{L^2}+\norm{q_{in}-\l q_{in}\r_x}_{H^1}\right]
\end{equation}
for every $t\geq0$.
\end{corollary}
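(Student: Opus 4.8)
The plan is to exploit the fact that for the Couette flow $U(y)=y$ the system \eqref{eq:EulerBomega} decouples in $x$-frequency and becomes, after passing to the moving frame $z = y - tx$ (equivalently, Fourier in $x$ and the change of variables that straightens the transport operator), an explicitly solvable ODE system in time for each fixed frequency $k \neq 0$. Concretely, writing $\widehat{\omega}_k(t,y)$, $\widehat{q}_k(t,y)$ for the Fourier coefficients in $x$ and introducing the profile variables $f_k(t,\eta) = \widehat{\omega}_k(t, \eta + kt \cdot)$-type unknowns in $(k,\eta)$ Fourier space, the transport term $U(y)\partial_x = y\partial_x$ is removed and \eqref{eq:EulerBomega} reduces to a second-order constant-coefficient-in-a-transformed-sense ODE. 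The key structural point is that the combined vorticity-density dynamics at frequency $(k,\eta)$ is governed by a system whose solution is a linear combination of two modes behaving like $\langle t\rangle^{\mu_\pm}$, where $\mu_\pm = -\tfrac12 \pm i\sqrt{R - \tfrac14}$ are precisely the Miles-Howard exponents; since $R > 1/4$, these are complex with real part $-1/2$, so $|\langle t\rangle^{\mu_\pm}| = \langle t\rangle^{-1/2}$. I would either cite the explicit solution formula from the earlier sections of the paper (Theorem \ref{thm:Couette-main}) or re-derive it: the point is that in the profile/frame variables the rescaled streamfunction and density decay like $\langle t \rangle^{-1/2}$ relative to their initial data, which after undoing the coordinate change and differentiating produces the stated growth.

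The first step is to reduce to a single $x$-frequency $k\neq 0$ by Plancherel in $x$ (both norms in \eqref{bd:Lyomega} and the upper bound are, up to the removal of the $x$-average, $\ell^2_k$ sums of one-dimensional-in-$y$ norms), so it suffices to prove matching upper and lower bounds for each $k$ with constants independent of $k$. The second step is to write down, for fixed $k$, the explicit representation of $\widehat\omega_k(t,y)$ and $\nabla\widehat q_k(t,y)$ in terms of $\widehat\omega_{in,k}$, $\widehat q_{in,k}$: this is where the $\langle t\rangle^{1/2}$ factor enters, because the vorticity $\omega$ (as opposed to the streamfunction-type quantities) picks up a factor of $|k,\eta + kt| \sim \langle kt \rangle$ from the inverse Laplacian's $\eta$-shift, against the $\langle t\rangle^{-1/2}$ decay of the streamfunction mode — net growth $\langle t\rangle^{1/2}$. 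The same mechanism acts on $\nabla q$: the density profile $q$ itself decays, but $\partial_y q$ in physical variables corresponds to $(\eta + kt)\widehat q$ in profile variables, producing the $\langle t\rangle^{1/2}$ growth. The third step is to read off the upper bound directly from these formulas (the worst power of $\langle t\rangle$ is $1/2$, and one simply takes absolute values and sums in $k,\eta$, accounting for the $H^{-1}_y$, resp. $H^1_x$, weights on the right-hand sides that compensate the $\langle k,\eta\rangle$ weights appearing). The fourth step is the lower bound: here one must show the leading-order term genuinely does not cancel. The cleanest route is to test against a single well-chosen initial profile — or better, observe from the explicit solution that the two modes $\langle t\rangle^{\mu_+}$ and $\langle t\rangle^{\mu_-}$ cannot conspire to cancel the $\langle t\rangle^{1/2}$ growth simultaneously for all $t$, because $\langle t\rangle^{\mu_+} + \langle t\rangle^{\mu_-} = 2\langle t\rangle^{-1/2}\cos(\sqrt{R-\tfrac14}\,\log\langle t\rangle)$ oscillates but its envelope is bounded below along a sequence $t_n \to \infty$; combined with the fact that $\widehat\omega_k$ and $\nabla \widehat q_k$ involve different linear combinations of the two modes, at least one of the two quantities on the left of \eqref{bd:Lyomega} is $\gtrsim \langle t\rangle^{1/2}(\text{data})$ for all $t$. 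One makes this rigorous by lower-bounding the sum $\|\omega(t)\|^2 + \|\nabla q(t)\|^2$ frequency by frequency using $|a|^2 + |b|^2 \geq \tfrac12(|a\pm b|)^2$ type inequalities to kill the oscillation.

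The main obstacle I anticipate is the lower bound, specifically ensuring the oscillatory cancellation between the two Miles-Howard modes does not destroy the $\langle t\rangle^{1/2}$ lower bound at isolated times. The resolution is structural rather than computational: one should not lower-bound $\omega$ and $\nabla q$ separately at a fixed $t$, but rather exploit that the \emph{pair} $(\omega, \nabla q)$ retains full information about both modes — the map from $(\text{mode amplitudes})$ to $(\widehat\omega_k, \widehat{\partial_y q}_k)$ at time $t$ should be shown to be uniformly invertible (in a suitably rescaled sense, after dividing out $\langle t\rangle^{1/2}$), so that $\|\omega(t)\|^2 + \|\nabla q(t)\|^2 \gtrsim \langle t\rangle (|\text{amplitude}_+|^2 + |\text{amplitude}_-|^2)$, and the latter is comparable to the weighted norms of the data on the right-hand side of \eqref{bd:Lyomega}. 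This invertibility is essentially the statement that the Wronskian of the two-mode system is non-degenerate, which holds precisely because $R \neq 1/4$ makes $\mu_+ \neq \mu_-$. A secondary, more technical point is getting the Sobolev indices exactly right (why $L^2_x H^{-1}_y$ and $H^1_x L^2_y$ on the lower bound, but the cruder $L^2$ and $H^1$ on the upper bound): this is bookkeeping with the $\langle k,\eta\rangle$ weights generated by $\Delta^{-1}$ and $\nabla$, and the asymmetry reflects that one needs only \emph{some} data to exhibit growth (lower bound) but must control \emph{all} data (upper bound).
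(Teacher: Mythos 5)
Your instinct is right — the result hangs entirely on the pointwise-in-$(k,\eta)$ information already available for Couette — but you propose to rebuild that information from scratch via the Miles--Howard mode decomposition $\langle t\rangle^{-\frac12\pm i\sqrt{R-1/4}}$ together with a Wronskian non-degeneracy argument, which is essentially the hypergeometric route of Hartman and Lin--Yang that the paper deliberately avoids. The paper's own proof is a one-liner from Theorem~\ref{thm:Couette-main}: the \emph{two-sided} pointwise bound \eqref{eq:estimatecouette2} gives $|p^{-\frac14}\Omega_k(t)|^2+|p^{\frac14}Q_k(t)|^2\approx|(k^2+\eta^2)^{-\frac14}\Omega_k(0)|^2+|(k^2+\eta^2)^{\frac14}Q_k(0)|^2$ for every $\eta$ and $t$, so there is no residual oscillation to control — the quadratic form $E(t)$ of \eqref{def:pointwise-functional-Couette} has already absorbed it. One then multiplies by $p^{\frac12}$, integrates in $\eta$, identifies $\|\Omega_k(t)\|_{L^2}=\|\omega_k(t)\|_{L^2}$ and $\|p^{\frac12}Q_k(t)\|_{L^2}=\|\nabla q_k(t)\|_{L^2}$, and closes with the elementary algebraic facts $p^{\frac12}(t;k,\eta)(k^2+\eta^2)^{\frac12}\gtrsim k^2\langle t\rangle$ (from $(k^2+a^2)(k^2+b^2)\geq\tfrac12k^2(a-b)^2$ with $a=\eta-kt$, $b=\eta$) for the lower bound and $p^{\frac12}\lesssim\langle t\rangle(k^2+\eta^2)^{\frac12}$ for the upper bound; summing over $k$ finishes. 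Your route would work if carried through rigorously, but is strictly harder: one must construct the fundamental solutions, track their leading asymptotics with constants uniform in $(k,\eta)$, and then argue invertibility of the mode-to-data map — all of which is exactly what the symmetrized energy $E(t)$ was designed to bypass. One small misreading worth flagging: Theorem~\ref{thm:Couette-main} does not say that the rescaled streamfunction and density both decay like $\langle t\rangle^{-1/2}$; it conserves $|p^{-\frac14}\Omega_k|^2+|p^{\frac14}Q_k|^2$, so $\Omega_k$ \emph{grows} like $p^{\frac14}\sim\langle t\rangle^{\frac12}$ while $Q_k$ decays like $p^{-\frac14}\sim\langle t\rangle^{-\frac12}$, and that asymmetry is precisely what produces the $L^2_xH^{-1}_y$ and $H^1_xL^2_y$ norms on the right of \eqref{bd:Lyomega}.
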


Corollary \ref{cor:Lyinst} highlights the presence of a secular instability, since $\omega$ and $\nabla q$ \emph{may} grow as $t^{1/2}$.
Interestingly, if $q$ was only transported by the Couette flow, then $\nabla q$ may grow as $t$, while in this case the system dynamics produce
a possible slower growth (as clear also from the $L^2$ decay of $q$ of Theorem \ref{thm:CloseCouette}). This instability for the vorticity is also the reason why  the inviscid damping rates of Theorem \ref{thm:CloseCouette} are slower than the ones of the constant density case, and confirms what predicted by Hartman in \cite{Hartman75}.

A similar type of instability was also 
recently observed in the compressible case \cite{ADM20} and in \cite{Zillinger20} in the simplified case of a 
viscous fluid under the Boussinesq approximation.
For homogeneous fluids linearized around Couette, a completely different (linear) model was introduced in \cite{DZ19}, where the authors employ \emph{ad hoc} initial data, which are built in such a way that the vorticity is unstable in any $H^s$, $s>2$ but the velocity still enjoys inviscid damping. This is related to the so-called mechanism of \textit{echoes}.

\subsection{Proof strategy and comparison with previous works}
To the best of our knowledge, a linear asymptotic stability analysis of shear flows in the non-constant density setting 
appears in \cite{Lin18} and \cites{ADM20}.

In \cite{Lin18}, the authors treat the case of the Couette flow $U(y)=y$
and an exponential density 
profile as the one in \eqref{eq:density}, for an arbitrary $R>0$. In the case $R>1/4$, an improvement of the 
statement of \cite{Lin18} is recovered by Theorem
\ref{thm:CloseCouette} by simply setting $\eps=0$. In particular no (exponential) weights are needed in the norms. 
However, it is shown in \cite{Lin18} that for $R\leq 1/4$ the decay rates  undergo (at least) a
logarithmic correction. The methods of proof in our work are completely different from those employed in  \cite{Lin18}. 
There, the analysis is based on  the use of hypergeometric functions (an approach that dates back to Hartman \cite{Hartman75}), 
which were needed to solve the system 
\eqref{eq:EulerBomega} in frequency space. In fact, by performing the change of variables to follow the background shear, it is possible to rewrite \eqref{eq:EulerBomega} as a second order linear ODE in terms of $\psi$ that can be solved with the use of hypergeometric functions. 
	
In our paper, we instead take inspiration from the recently treated compressible case  \cite{ADM20}. Namely, we consider \eqref{eq:EulerBomega} as a $2\times 2$ non-autonomous dynamical system at each fixed frequency. Then, by exploiting a proper symmetrization, we present a proof of Theorem \ref{thm:CloseCouette} in the Couette case (i.e. for $\eps=0$), which will be  propaedeutic to the study of shears
close to Couette. 

Finally, our method of proof relies on the use of an energy functional endowed with certain suitable weights, among which one is inspired by the Alinhac ghost weight energy method \cite{Alinhac01} and was already used by Zillinger (see \cite{Zillinger14} and references therein) in the constant density case, and in \cite{ADM20} in the compressible case.
Notice that the analysis of this model requires a delicate tracking of
the constants depending on $\beta$, which will be involved in the definition of one of the weights of our energy functional. In fact, this is the idea which allows us to handle the general case of exponentially stratified fluids near Couette.
We would also like to point out that the shears-near-Couette case could be seen as a step forward aiming at investigating \emph{nonlinear} stability of the Couette flow for stratified fluids, as a good understanding of the dynamics of the background shear flow was a key point for the nonlinear stability in the constant density case, see \cite{BM15}.

\subsection{Stratified flows in the Boussinesq approximation}\label{sub-1.1}
The analysis of non-homogeneous fluids is commonly carried out under the \textit{Boussinesq approximation}, namely ``density is assumed constant except when it directly causes buoyant forces"-\cite{gray1976validity}. In particular, the system  \eqref{eq:nonlin} is reduced to 
\begin{equation}
\label{syst:bouss}
\begin{split}
		\bar{\rho}_{c}(\dt+\uu \cdot \nabla) \uu+\nabla \widetilde{p}&=-\widetilde{\rho}\boldsymbol{g},\\
		(\dt+\uu \cdot \nabla)\widetilde{\rho}&=0,
\end{split}
\end{equation}
where  $\bar{\rho}_{c}$ is the constant density average. For a physical derivation of this approximation we refer to \cite{gray1976validity}. It is then customary to assume that at equilibrium the stratification (or density variation) is given by a \emph{stable} profile $\bar \rho (x, y) = \bar \rho(y)$ so that $\de_y\bar \rho (y) <0$. 
Among all the possible stratification's equilibria, one often assumes in addition that the stratification is
locally affine so that $\de_y\bar{\rho}(y)$ is constant, see \cite{BDSR, Lannes-Saut} and references therein for an interesting discussion on the benefits and limitations of this approximation. This way, we want to linearize the system \eqref{syst:bouss} around the equilibrium 
\begin{align}
	\label{def:steadybouss}
	\bar\rho=\bar\rho(y)=1-\gamma y, \qquad \de_y\bar p=-\mathfrak{g} \bar\rho, \qquad \bar{\uu}=(U(y),0).
\end{align}
We just remark that when $U(y)=0$ it describes perturbations of a steady solution where gravity compensates for the vertical derivative of the pressure ($\partial_y \bar{p}=-\mathfrak{g} \bar{\rho}$), which translates in the fact that, in this Boussinesq regime, the restoring force of equilibrium's fluctuations is exactly gravity (\emph{Archimedes' principle}). For a well-posedness and stability theory of small (and finite energy) perturbations of stratified fluids with zero velocity at the equilibrium see for instance \cite{Charve17, Danchin, BDSR,CCL18}. \\ \indent 
Thus, we consider  
\begin{equation}
	\uu=\vv+\bar{\uu}, \qquad \widetilde{\rho}=\rho+\bar{\rho}, \qquad \widetilde{p}=p+\bar p.
\end{equation}
 We define 
\begin{align}\label{def:scaled-density-Bouss}
	\omega=\nabla^\perp \cdot \vv, \qquad q=\dfrac{\rho}{\gamma}, \qquad R=\gamma \mathfrak{g},
\end{align}
where $\omega$ and $q$ are, respectively, the vorticity and the scaled density of the perturbation whereas $R$ is the \textit{Brunt-V\"ais\"al\"a frequency}. By assuming $\bar{\rho}_c=1$ the linearization of the system \eqref{syst:bouss} around the \textit{linearly stratified shear flow} \eqref{def:steadybouss} is given by 
\begin{equation}
	\begin{aligned}\label{eq:bouss-last}
		(\de_t + U(y) \de_x)\omega -  U''(y) \de_x \psi&=-R \de_x q,\\
		(\de_t + U(y) \de_x)q&=  \de_x \psi,\\
		\Delta \psi &=\omega.
	\end{aligned}
\end{equation}	
Notice that \eqref{eq:bouss-last} is exactly \eqref{eq:EulerBomega} with $\beta=0$, so that Theorem \ref{thm:CloseCouette} gives asymptotic
stability for system \eqref{eq:bouss-last} in the regime $R>1/4$.

\subsection{The stably stratified regime}\label{sub-1.2}
A model for the stably stratified regime can be heuristically derived from the Euler equations \eqref{eq:nonlin} as follows.
We consider an \emph{exponential stratification}, where the \emph{stable} background density is $\bar{\rho}(y)=\e^{-\beta y}$ for some $\beta>0$,
and we linearize system \eqref{eq:nonlin} around this steady profile. Again, we are interested in studying stability of shear flows. Putting both the approximations together, we thus linearize \eqref{eq:nonlin} near

\begin{align}
\bar\uu=(U(y),0), \qquad \bar\rho=\bar\rho(y)=\e^{-\beta y}, \qquad \de_y\bar p=-\mathfrak{g} \bar\rho.
\end{align}
Hitting the equation for the velocity with $\nabla^\perp\cdot$, we obtain
\begin{equation}\label{eq:linearization-intro}
	\begin{aligned}
	(\de_t + U(y)\de_x) \omega - U''(y)\de_x\psi+\bb(\de_tv^x + U(y)\de_x v^x + U'(y)\de_x\psi)&=-\mathfrak{g}\frac{\de_x\rho}{\bar\rho},\\
	(\de_t+ U(y)\de_x)\frac{\rho}{\bar\rho}&=\bb\de_x\psi,\\
	\Delta \psi&=\omega,
	\end{aligned}
\end{equation}
where the velocity perturbation is recovered again from the Biot-Savart law in \eqref{eq:biot}. 
We rearrange now the linearized system \eqref{eq:linearization-intro} by introducing the scaled density and the parameter $R$
\begin{align}\label{eq:density}
q&=\frac{\rho}{\beta \bar{\rho}}, \qquad R=\beta \mathfrak{g}.
\end{align}
Using that $v^x=-\de_y\psi$, we recover the equations of system \eqref{eq:EulerBomega}. Due to the (linear) relation between $\beta$ and $R$,
our Theorem \ref{thm:CloseCouette} provides an answer to the question of asymptotic stability in the case $\beta > \frac{1}{4\mathfrak{g}}$, or equivalently $R>\frac{1}{4}$. 
We point out that, besides stability of the stratification profile, the positivity of $\beta$ is actually needed to satisfy the criterion $R=\beta \mathfrak{g} > \frac{1}{4}$, which in our strategy turns to be the key condition in order to have a coercive energy functional.

\begin{remark}
The threshold $R>1/4$ is linked to the celebrated \emph{Miles-Howard criterion} for (spectral) stability of stratified fluids
\cites{Howard61,Miles61}, which precisely requires the \emph{Richardson number}  
\begin{align}
\mathrm{Ric}(y)=-\mathfrak{g} \frac{\bar{\rho}'(y)}{\bar{\rho}(y)} \frac{1}{|U'(y)|^2}= \frac{R}{|U'(y)|^2}
\end{align}
to be greater than $1/4$. 
Notice that in the case of shear flows close to Couette (see Theorem \ref{thm:CloseCouette}), this condition can be violated since $|U'(y)|<1+\eps$ only implies $\mathrm{Ric}(y)>1/4-\eps$.
This partially explains the $\eps$-losses in the decay rates, in agreement with what can be proved in the Couette case \cite{Lin18}.
\end{remark}

\subsection{Organization of the paper}

The next Section \ref{sec:prelim} is dedicated to the set-up of the functional-analytic tools needed to study our problem.
In Section \ref{sec:shearsCouette}, relying on a dynamical system approach, we prove 
optimal inviscid damping decay rates of \eqref{eq:EulerBomega} for the Couette flow $U(y)=y$. 
Next, we are able to extend the result (up to a small loss in terms of the decay rates) to shears of general profiles, which are close to Couette in a suitable sense. The latter result is obtained by means of a different method, which is based on the use of a properly weighted energy functional, and is presented in Section \ref{sec:shearsClose}.

\section{Preliminaries}\label{sec:prelim}
In this section, we introduce the set of variables, the functional spaces and the operators which will be used in our analysis. First, we define new coordinates and variables, which follow the background shear flow and will be sometimes referred as the ``moving frame''. Next, we set the notation for the Fourier transform and the functional spaces. Since the new spatial coordinates as well as partial derivatives depend on derivatives of the shear flow, then the action of differential operators in the moving coordinates has to be established and well-defined. Related results are provided in Proposition \ref{prop-delta-t}-\ref{prop:Bt} at the end of this section.

\subsection{Change of coordinates and decoupling}
It is best to further re-write the system \eqref{eq:EulerBomega} by defining the auxiliary variable
\begin{align}\label{eq:relomegatheta}
\theta=\omega -\beta\dy \psi=(I-\beta\de_y\Delta^{-1})\omega.
\end{align}
Since the operator $I-\beta\de_y\Delta^{-1}$ is well-defined and invertible for any $\beta \ge 0$,
we can write  \eqref{eq:EulerBomega}  in terms of $\theta$ and $q$ alone as
\begin{equation}\label{eq:EulerB}
\begin{aligned}
(\de_t + U(y)\de_x)\theta &=-\RR\de_xq+(U''(y)-\beta U'(y))\de_x\Delta^{-1}\left(I-\beta\de_y\Delta^{-1}\right)^{-1}\theta,\\
(\de_t+ U(y)\de_x)q&=\de_x\Delta^{-1}\left(I-\beta\de_y\Delta^{-1}\right)^{-1}\theta,
\end{aligned}
\end{equation}
with initial data
\begin{align}
\theta(0,x,y)=\theta_{in}(x,y),\qquad q(0,x,y)=q_{in}(x,y).
\end{align}
Due to the transport nature of \eqref{eq:EulerB}, the convenient set of coordinates is that of a moving frame that follows the background shear. Define
\begin{align}
X=x-U(y)t,\qquad Y=U(y)
\end{align}
and the corresponding new unknowns 
\begin{equation}\label{def:new-variables}
\Theta(t,X,Y)=\theta(t,x,y),\quad \QQ(t,X,Y)=q(t,x,y),\quad \Omega(t,X,Y)=\omega(t,x,y).
\end{equation}
The differential operators will change accordingly, in analogy with the cases already studied in the incompressible literature \cite{Zillinger14,Zillinger15,CZZ18, JIA19}. 
In particular, by defining 
\begin{align}\label{def-g-b}
g(Y):=U'(U^{-1}(Y)),\qquad b(Y):=U''(U^{-1}(Y)),
\end{align}
we obtain the rules
\begin{align}
\de_x\rightsquigarrow\de_X,\qquad \dy \rightsquigarrow g(Y)(\dY-t\dX),
\end{align}
and
\begin{align}\label{def:Deltat}
\Delta\rightsquigarrow \Delta_t:=\dXX+g^2(Y)(\dY-t\dX)^2+b(Y)(\dY-t\dX).
\end{align}
As we shall see, if $g,b$ are small in a way made more precise later (see Proposition \ref{prop-delta-t}), the operator $\Delta_t$ is invertible.
Similarly (see Proposition \ref{prop:Bt}), the operator
\begin{align}
\label{def:Bt}B_t=&\left(I-\beta g(Y)(\dY-t\dX)\Delta_t^{-1}\right)^{-1}.
\end{align}
is well-defined for any $\beta$. Moreover, it follows from \eqref{eq:relomegatheta} and \eqref{def:new-variables} that
\begin{equation}
\Theta:=B_t^{-1}\Omega.
\end{equation}
In the moving frame, the equations \eqref{eq:EulerB}, written in terms of $\Theta$ and $\QQ$ read  
\begin{equation}\label{eq:EulerBmove}
\begin{aligned}
 \dt \Theta &=-\RR\dX Q+\left(b(Y)-\beta g(Y)\right)\dX \Delta_t^{-1}B_t\Theta,\\
\dt \QQ&=\dX \Delta_t^{-1}B_t\Theta.
\end{aligned}
\end{equation}
It is apparent that the above system decouples in the $X$-frequency. Precisely, we adopt the following convention. Given a function
$f=f(X,Y)$, we can write
\begin{align}\label{def:F-transform-x}
f(X,Y)=\sum_{k\in \ZZ} f_k(Y)\e^{ik X}, \qquad f_k(Y)=\frac{1}{2\pi}\int_\T f(X,Y)\e^{-ik X}\dd X,
\end{align} 
where throughout the paper $k\in \ZZ$ will denote the $X$-Fourier variable.
Applying this reasoning to $\Omega$ and $Q$, we can interpret \eqref{eq:EulerBmove} as a coupled system of infinitely
many equations that read
\begin{equation}\label{eq:EulerBmovek}
\begin{aligned}
 \dt \Theta_k&=-ik\RR Q_k+ik\left(b(Y)-\beta g(Y)\right) \Delta_t^{-1}B_t\Theta_k,\\
\dt \QQ_k&=ik \Delta_t^{-1}B_t\Theta_k.
\end{aligned}
\end{equation}
In the above equations, we identified $\Delta_t^{-1}$ and $B_t$ with their $X$-Fourier localizations.
It is clear that for $k=0$ there is nothing to prove, and therefore we will always assume $k\neq 0$.

\subsection{Function spaces and operators}
The Fourier transform in $Y$ of a function $f=f(X,Y)$ is defined by
\begin{equation}\label{def:Fourier-transform}
\ff(X,\eta)=\int_{\mathbb{R}} f(X,Y) \e^{-i\eta Y} \dd Y, \qquad \eta\in \R.
\end{equation}
For any $k\in \ZZ$ and and $\eta\in \R$, we define
\begin{align}
\l \eta\r=\sqrt{1+\eta^2},\qquad \l (k, \eta)\r=\sqrt{1+k^2+\eta^2}.
\end{align}
We define the 
Sobolev space $H^s$ of order $s\in \R$ by the scalar product
\begin{align}
\l f,h\r_s=\l \widehat{f},\widehat{h}\r_s=\sum_{k\in \ZZ}\int_{\R}  \l (k, \eta)\r^{2s}\ff_k(\eta)\overline{\widehat{h}_k(\eta)}\dd\eta
\end{align}
and norm
\begin{align}
\| f\|_s^2=\| \widehat{f}\|_s^2=\sum_{k\in \ZZ}\int_{\R}  \l (k, \eta)\r^{2s}|\ff_k(\eta)|^2\dd\eta.
\end{align}
By the above notation, we mean that we make tacit use of Plancherel theorem.
When $s=0$, the above reduce to the standard $L^2$ scalar product and norm, and we will use the explicit notation $\| \cdot\|_{L^2}$ for the norm.
An important role will be played by the operator
\begin{align}
\Delta_L:=\dXX+(\dY-t\dX)^2,
\end{align}
which is exactly what one obtains from $\Delta_t$ in \eqref{def:Deltat} when setting $g\equiv1$ and $b\equiv0$.
The Fourier symbol of $-\Delta_L$ will be denoted by
\begin{align}\label{def:p}
p(t;k,\eta)=k^2+(\eta-k t)^2,
\end{align}
along with its time-derivative
\begin{align}
p'(t;k,\eta)=-2k(\eta-k t).
\end{align}
To see $\Delta_t$ as a perturbation of $\Delta_L$, it is convenient to write
\begin{equation}\label{eq:DeltatI-T}
\Delta_t=\left[I-T_{\eps}\right]\Delta_L,
\end{equation}
with
\begin{align}\label{def:tildeTL}
T_{\eps}=\left[(g^2-1)(\dY-t\dX)^2+b(\dY-t\dX)\right](-\Delta_L^{-1}).
\end{align}
In particular,
\begin{align}\label{def:tildeTLbg}
T_{\eps}=T_{\eps}^g+ T_{\eps}^b,
\end{align}
where
\begin{align}\label{eq:Tgandb}
T_{\eps}^g=(g^2-1)(\dY-t\dX)^2(-\Delta_{L}^{-1}), \qquad T_{\eps}^b=b(\dY-t\dX)(-\Delta_{L}^{-1}).
\end{align}
The following proposition holds true. 
\begin{proposition}\label{prop-delta-t}
Let $s\geq 0$ be arbitrarily fixed. There exists a constant $c_s\geq 1$ with the following property.
Assume that 
\begin{align}
\norma{g-1}{s+1}+\norma{b}{s+1}\leq \eps,
\end{align}
 for a positive $\eps< 1/c_s$. Then $\Delta_t$ is invertible on $H^s$ and
\begin{equation}\label{def:Deltat-1}
\Delta_t^{-1}=\Delta_L^{-1} T_L,
\end{equation}
where 
\begin{equation}\label{def:TL}
T_L=\sum_{n=0}^{\infty}T_{\eps}^n=I+T_\eps T_L.
\end{equation}
The convergence above is in the $H^s$-operator norm, and
\begin{align}\label{eq:normbdneum}
\norm{T_L}_{H^s\to H^s} \leq \frac{1}{1-c_s\eps}.
\end{align} 
\end{proposition}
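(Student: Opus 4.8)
The plan is to prove Proposition~\ref{prop-delta-t} by a Neumann series argument based on the factorization~\eqref{eq:DeltatI-T}, $\Delta_t=(I-T_\eps)\Delta_L$. This reduces everything to the single operator bound $\norm{T_\eps}_{H^s\to H^s}\le c_s\eps$ for some constant $c_s\ge1$ depending only on $s$, \emph{uniformly in $t\ge0$}. Granting this, the hypothesis $\eps<1/c_s$ gives $\norm{T_\eps}_{H^s\to H^s}<1$, so the series $T_L=\sum_{n\ge0}T_\eps^n$ converges in the $H^s$-operator norm, satisfies $T_L=I+T_\eps T_L$ and $\norm{T_L}_{H^s\to H^s}\le\sum_{n\ge0}(c_s\eps)^n=(1-c_s\eps)^{-1}$, which is~\eqref{eq:normbdneum}, and $I-T_\eps$ is boundedly invertible with inverse $T_L$. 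Composing on the left with $\Delta_L^{-1}$ --- the Fourier multiplier of symbol $-1/p(t;k,\eta)$ (cf.~\eqref{def:p}), bounded on $H^s$ since $p(t;k,\eta)\ge k^2\ge1$ for $k\neq0$ --- then yields~\eqref{def:Deltat-1}.

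For the core estimate I would split $T_\eps=T_\eps^g+T_\eps^b$ as in~\eqref{eq:Tgandb} and write each summand as a Fourier multiplier followed by a multiplication operator. On the Fourier side $(\dY-t\dX)^2(-\Delta_L^{-1})$ has symbol $-(\eta-kt)^2/p(t;k,\eta)$, whose modulus $(\eta-kt)^2/(k^2+(\eta-kt)^2)\le1$ for all $t$ and all $(k,\eta)$, while $(\dY-t\dX)(-\Delta_L^{-1})$ has symbol $i(\eta-kt)/p(t;k,\eta)$, whose modulus is $|\eta-kt|/(k^2+(\eta-kt)^2)\le 1/(2|k|)\le1/2$ by the arithmetic--geometric inequality and $|k|\ge1$. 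Hence these two multipliers are bounded on $H^s$, uniformly in $t$, with norms $\le1$ and $\le1/2$ respectively, and it remains to control multiplication by $g^2-1$ and by $b$ on $H^s$.

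Since $g^2-1$ and $b$ depend on $Y$ only, in Fourier they act by convolution in $\eta$ at each fixed $k$. Using the elementary inequality $\langle(k,\eta)\rangle\lesssim\langle(k,\xi)\rangle\langle\eta-\xi\rangle$, Young's inequality, and the embedding $\langle\cdot\rangle^{-1}\in L^2(\R)$ to pass from $L^1$ to $L^2$ in the multiplier factor, one obtains $\norm{mf}_{H^s}\le C_s\norma{m}{s+1}\norm{f}_{H^s}$ for any $m=m(Y)$; this is precisely where the $s+1$ (rather than $s$) regularity of $g$ and $b$ enters. Applying this with $m=b$ gives $\norm{T_\eps^b}_{H^s\to H^s}\le \tfrac12 C_s\norma{b}{s+1}\le\tfrac12 C_s\eps$. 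For $m=g^2-1$ I would first write $g^2-1=(g-1)^2+2(g-1)$ and use that $H^{s+1}(\R)$ is a Banach algebra (valid as $s+1>1/2$) to bound $\norma{g^2-1}{s+1}\le C_s'\norma{g-1}{s+1}^2+2\norma{g-1}{s+1}\le(C_s'+2)\eps$ when $\eps<1$; hence $\norm{T_\eps^g}_{H^s\to H^s}\le C_s(C_s'+2)\eps$. Summing, $\norm{T_\eps}_{H^s\to H^s}\le c_s\eps$ for a suitable $c_s\ge1$ depending only on $s$.

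The symbol computations and the product/multiplier inequality above are routine; the only point needing a bit of care is the uniformity in $t$ of all bounds, but this is automatic, since the $t$-dependence sits entirely in the two Fourier multipliers $(\eta-kt)^2/p$ and $(\eta-kt)/p$, whose sup-norms over $(k,\eta)$ are independent of $t$, while the multiplication operators by $g^2-1$ and $b$ do not see $t$ at all. I therefore expect no genuine obstacle: the statement is a clean Neumann-series lemma, with the multiplier estimate --- and the algebra property absorbing the quadratic term $(g-1)^2$ --- as the only ingredients.
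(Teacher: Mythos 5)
Your proposal is correct and follows essentially the same route as the paper: factor $\Delta_t=(I-T_\eps)\Delta_L$, split $T_\eps=T_\eps^g+T_\eps^b$, bound each Fourier multiplier pointwise and handle the $Y$-multiplications by $g^2-1,b$ via Young's convolution inequality in $H^s$, then sum the Neumann series. The only (minor) place where you are more explicit than the paper is in justifying $\norma{g^2-1}{s+1}\lesssim\norma{g-1}{s+1}$ via the decomposition $g^2-1=(g-1)^2+2(g-1)$ and the algebra property of $H^{s+1}(\R)$, which the paper leaves implicit.
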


\begin{proof}
We begin to prove the convergence of the Neumann series \eqref{def:TL} by obtaining a suitable bound on
\begin{align}
\norm{T_{\eps}f}_{s}\leq \|T_\eps^g f\|_{s}+\|T_\eps^b f\|_{s},
\end{align}
for any $f:\R\to\C$ in $H^s$.
To estimate the first part, notice that the Fourier symbol of the operator
$(\dY-t\dX)^2(-\Delta_{L}^{-1})$ is uniformly bounded above by 1. Therefore, by Young's convolution inequality and the fact that
$g\in L^\infty$, we have
\begin{align}
\norm{T_\eps^g f}_{s} &\leq \norm{|\widehat{g^2-1}|*|\ff|}_{s}\leq \norm{\l\cdot\r^s\widehat{g^2-1}}_{L^1}\| \l\cdot\r^s f\|_{L^2}\lesssim \norm{g-1}_{s+1}\|f\|_{s},
\end{align}
where the last inequality follows from
\begin{align}
 \norm{\l\cdot\r^s\widehat{g^2-1}}_{L^1}=\int_\R \l\eta\r^s\left|\widehat{g^2-1}\right| \dd \eta =\int_\R \frac{1}{\l\eta\r}\l\eta\r^{s+1}\left|\widehat{g^2-1}\right| \dd \eta
 \lesssim \norm{g^2-1}_{s+1} \lesssim \norm{g-1}_{s+1}.
\end{align}
In the same way,
\begin{align}
\|T_\eps^b f\|_{s}\lesssim \norm{b}_{s+1}\|f\|_{s}.
\end{align}
Hence, there exists a constant $c_s$ such that
\begin{align}
\label{bd:TtildeL}
\norm{T_{\eps}f}_{s}\leq c_s\left(\norm{g-1}_{s+1}+\norm{b}_{s+1}\right)\|f\|_{s}\leq c_s \eps \norm{f}_s.
\end{align}
If $\eps$ is chosen such that $c_s \eps<1$,
the Neumann series \eqref{def:TL} converges. In particular,
\begin{align}
T_L=[I-T_{\eps}]^{-1},
\end{align}
so that from \eqref{eq:DeltatI-T} we also obtain the relation \eqref{prop-delta-t}, and the proof is over.
\end{proof}

We also need to describe the action of $B_t$ on $H^s$.  Its definition relies on the operator
\begin{align}\label{def:BL}
B_L:=(I-\beta (\de_Y-t\de_X)\Delta_L^{-1})^{-1},
\end{align}
which is precisely $B_t$ in the Couette case where $g\equiv1$ and $b\equiv 0$ (compare with \eqref{def:Bt}). More precisely, adopting the same notation as for the expansion of $\Delta_t^{-1}$ in \eqref{def:Deltat-1}-\eqref{def:TL}, we prove in the following proposition that we can write

\begin{align}\label{def:2Bt}
	B_t=T_{B} B_L ,
\end{align}
where $T_{B}$ is a proper Neumann series (see Proposition \ref{prop:Bt}).
Notice that the operator $B_L$ is
a Fourier multiplier, and
\begin{align}\label{eq:BLsymb}
B_L^{-1}=1+\dfrac{i\beta(\eta-kt)}{p},
\end{align}
so everything is well-defined. 
It is now an easy task to show that, for any fixed frequency, the following bound holds true
\begin{align}\label{eq:BLboundbelow}
\dfrac{1}{\sqrt{1+\beta^2}}\le \left|B_L (t; k, \eta) \right|\le 	1.
\end{align} 
For further reference, we also need to introduce
\begin{align}\label{def:Beps}
B_\eps:&=B_L(\beta (g-1)(\de_Y-t\de_X)\Delta_L^{-1}+\beta g(\de_Y - t \de_X) \Delta_L^{-1}T_\eps T_L).
\end{align}
The action of the above operators on $H^s$ is detailed in the proposition below.
\begin{proposition}
	\label{prop:Bt}
	Let $s\geq 0$ and $\beta \ge 0$ be arbitrarily fixed. 
Assume that
\begin{align}\label{ass:small}
\norma{g-1}{s+1}+\norma{b}{s+1}\leq \eps,
\end{align}
 for a positive $\eps \in (0, \eps_0]$. Then we have

\begin{align}
\label{bd:TildeB}\norm{B_\eps}_{H^s\to H^s} \lesssim \eps.
\end{align} 
The operator $B_t$ on $H^s$ is well-defined and explicitly given by
\begin{align}\label{eq:NeumBt}
B_t&=(I-B_\eps)^{-1}B_L=T_{B} B_L,
\end{align}
where
\begin{align}\label{def:TB}
T_{B}=\sum_{n=0}^\infty B_\eps^n=I+B_\eps T_{B}.
\end{align}
Moreover,
\begin{align}\label{bd:norm_TBL}
\norm{T_{B}}_{H^s \to H^s} \le 2.
\end{align}
\end{proposition}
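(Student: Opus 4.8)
The plan is to follow the exact same template as the proof of Proposition \ref{prop-delta-t}, but now with the operator $B_\eps$ playing the role of $T_\eps$ and $B_L$ playing the role of $\Delta_L^{-1}$. First I would establish the operator bound \eqref{bd:TildeB}. Looking at the definition \eqref{def:Beps}, the operator $B_\eps$ is the composition of $B_L$ with $\beta(g-1)(\de_Y-t\de_X)\Delta_L^{-1} + \beta g(\de_Y-t\de_X)\Delta_L^{-1}T_\eps T_L$. Since $B_L$ is a Fourier multiplier with $|B_L|\le 1$ by \eqref{eq:BLboundbelow}, it suffices to bound the second factor on $H^s$. For the first piece, the Fourier symbol of $(\de_Y-t\de_X)\Delta_L^{-1}$ is $i(\eta-kt)/p$, which satisfies $|(\eta-kt)/p|\le 1/(2k)\le 1/2$ by the elementary inequality $p=k^2+(\eta-kt)^2\ge 2|k||\eta-kt|$; hence, arguing exactly as in the proof of Proposition \ref{prop-delta-t} via Young's convolution inequality and the embedding $H^{s+1}\hookrightarrow$ (weighted $L^1$ of the Fourier transform), one gets $\norm{\beta(g-1)(\de_Y-t\de_X)\Delta_L^{-1}f}_s\lesssim \beta\norm{g-1}_{s+1}\norm{f}_s\lesssim \eps$ (with implicit constant depending on $\beta$, consistently with the paper's convention that constants may depend on $\beta$). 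For the second piece, I would further decompose: $g=1+(g-1)$ and use $\norm{g}_{L^\infty}\le 1+C\eps\lesssim 1$, the same symbol bound $|(\eta-kt)/p|\le 1/2$ for $(\de_Y-t\de_X)\Delta_L^{-1}$, the bound $\norm{T_\eps}_{H^s\to H^s}\le c_s\eps$ from \eqref{bd:TtildeL}, and $\norm{T_L}_{H^s\to H^s}\le 1/(1-c_s\eps)\le 2$ from \eqref{eq:normbdneum} (shrinking $\eps_0$ if needed). Multiplying these together produces a factor of $\eps$, so that the whole second piece is $\lesssim \eps$ as well. Combining, $\norm{B_\eps}_{H^s\to H^s}\lesssim \eps$.

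Next, choosing $\eps_0$ small enough that the implicit constant in \eqref{bd:TildeB} times $\eps_0$ is at most $1/2$, the Neumann series $T_B=\sum_{n=0}^\infty B_\eps^n$ converges in the $H^s$-operator norm, with $\norm{T_B}_{H^s\to H^s}\le \sum_{n\ge 0}(1/2)^n = 2$, which is \eqref{bd:norm_TBL}, and $T_B=(I-B_\eps)^{-1}$ satisfies the fixed-point relation $T_B = I + B_\eps T_B$ in \eqref{def:TB}.

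The remaining point is the algebraic identity \eqref{eq:NeumBt}, namely that $B_t = (I-B_\eps)^{-1}B_L$, i.e. that $B_t B_L^{-1} = (I-B_\eps)^{-1}$, or equivalently $B_L B_t^{-1} = I - B_\eps$. Here I would unwind the definitions. By \eqref{def:Bt}, $B_t^{-1} = I - \beta g(Y)(\de_Y - t\de_X)\Delta_t^{-1}$, and by Proposition \ref{prop-delta-t} we may substitute $\Delta_t^{-1} = \Delta_L^{-1}T_L$ (valid on $H^s$ under the smallness assumption), giving $B_t^{-1} = I - \beta g(\de_Y-t\de_X)\Delta_L^{-1}T_L$. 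Multiplying on the left by $B_L$ and using $B_L T_L = I$ is not quite right — rather, I write $B_L B_t^{-1} = B_L - \beta g(\de_Y-t\de_X)\Delta_L^{-1}T_L$ wait, $B_L$ here is applied as a multiplier — more carefully, $B_L B_t^{-1} = B_L\big(I - \beta g(\de_Y-t\de_X)\Delta_L^{-1}T_L\big)$. Now I use $B_L^{-1} = I - \beta(\de_Y-t\de_X)\Delta_L^{-1}$ (this is \eqref{def:BL}), so $B_L\big(I - \beta(\de_Y-t\de_X)\Delta_L^{-1}\big) = I$, i.e. $B_L = I + \beta B_L (\de_Y-t\de_X)\Delta_L^{-1}$. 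Substituting and also writing $T_L = I + T_\eps T_L$ from \eqref{def:TL} and $g = 1 + (g-1)$, a short bookkeeping computation collects the terms into $B_L B_t^{-1} = I - B_\eps$ with $B_\eps$ exactly as in \eqref{def:Beps}: the $\beta B_L(\de_Y-t\de_X)\Delta_L^{-1}$ from expanding $B_L$ cancels the $g\equiv 1$, $T_L\equiv I$ part of the subtracted term, leaving precisely the $(g-1)$-term and the $g\,T_\eps T_L$-term that constitute $B_\eps$. This yields \eqref{eq:NeumBt}.

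I expect the main obstacle to be purely bookkeeping: making sure the algebraic identity for $B_\eps$ comes out with exactly the stated form \eqref{def:Beps} after substituting the three expansions ($B_L^{-1}$, $\Delta_t^{-1} = \Delta_L^{-1}T_L$, and $T_L = I + T_\eps T_L$) and splitting $g = 1 + (g-1)$ — there is genuine risk of a sign error or a misplaced $g$ versus $g-1$. A secondary subtlety is that all the Fourier-multiplier operators $B_L$, $(\de_Y - t\de_X)\Delta_L^{-1}$ commute with each other (being functions of $\de_Y - t\de_X$ and $\de_X$) but do \emph{not} commute with multiplication by $g(Y)$ or $b(Y)$, so the order of composition must be tracked faithfully; fortunately the definitions \eqref{def:Beps} and \eqref{def:Bt} already fix that order, so it is just a matter of not reshuffling factors. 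Everything else — the convergence of the Neumann series and the operator bounds — is a verbatim repetition of the argument already carried out for $T_L$, using \eqref{eq:BLboundbelow}, \eqref{bd:TtildeL}, \eqref{eq:normbdneum}, and the trivial symbol bound $|\eta - kt|/p \le 1/2$.
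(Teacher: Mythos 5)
Your proposal is correct and follows essentially the same route as the paper: the operator bound on $B_\eps$ comes from Young's convolution inequality, the multiplier bounds $|B_L|\le 1$ and $|\eta-kt|\,p^{-1}\le 1$, and the $T_\eps$, $T_L$ estimates inherited from Proposition \ref{prop-delta-t}, while the factorization $B_t=(I-B_\eps)^{-1}B_L$ follows from substituting $\Delta_t^{-1}=\Delta_L^{-1}T_L$, splitting $g=1+(g-1)$, and invoking $B_L^{-1}=I-\beta(\de_Y-t\de_X)\Delta_L^{-1}$. The paper proves the algebraic identity first and the norm bound second, and it rearranges $B_\eps$ into the three summands of \eqref{def:Beps1} before estimating rather than splitting $g$ inside the second term, but these are purely cosmetic variations on the same argument.
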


\begin{proof}[Proof of Proposition \ref{prop:Bt}]
The expansion of $B_t$ in \eqref{def:Bt} in terms of the Neumann series \eqref{eq:NeumBt}
follows from the definition of $\Delta_t^{-1}$ in \eqref{def:Deltat-1}-\eqref{def:TL} and the identity
\begin{align}\label{eq:last-Bstar}
B_t&=(I-\beta g (\de_Y-t\de_X)\Delta_t^{-1})^{-1}\notag\\
&=(I-\beta g (\de_Y-t\de_X)(\Delta_L^{-1}+\Delta_L^{-1}T_{\eps}T_L))^{-1}\notag\\
&=(I-\beta (g-1)(\de_Y-t\de_X)\Delta_L^{-1}-\beta (\de_Y-t\de_X)\Delta_L^{-1}  -\beta g  (\de_Y - t \de_X) \Delta_L^{-1}T_{\eps}T_L)^{-1}\notag\\
&=(B_L^{-1} (1-B_\eps))^{-1}=(1-B_\eps)^{-1}B_L=T_{B}B_L,
\end{align}
where $B_L, B_\eps$ are in \eqref{def:BL} and \eqref{def:Beps}. The above expansion is rigorous provided that $(I-B_\eps)$ is invertible.
The expression of $B_\eps$ in \eqref{def:Beps} is rearranged here as follows
\begin{align}\label{def:Beps1}
B_\eps&=B_L(\beta (g-1)(\de_Y-t\de_X)\Delta_L^{-1}+\beta (g-1)(\de_Y - t \de_X) \Delta_L^{-1}T_\eps T_L+\beta (\de_Y - t \de_X) \Delta_L^{-1}T_\eps T_L).
\end{align}
We use the upper bound on the multiplier $B_L$ in \eqref{eq:BLboundbelow} and the fact that $|\eta-kt|p^{-1} \le p^{-1/2} \le 1$ to obtain
\begin{align}\notag
\norm{B_\eps f}_s & \le \beta \norm{ |\widehat{g-1}|*|\widehat{f}|}_s + \beta \norm{|\widehat{g-1}|* |\widehat{T_{\eps}T_L  f}|}_s + \beta \norm{T_\eps T_L f}_s\\
& \lesssim \beta \norm{\l \cdot \r^s \widehat{g-1}}_{L^1} \norm{f}_s + \norm{\l \cdot \r^s \widehat{g-1}}_{L^1} \norm{T_\eps T_L f}_s+ \beta \norm{T_\eps T_L f}_s\notag\\
& \lesssim  \beta \norm{g-1}_{s+1} \norm{f}_s + \beta (1+\norm{g-1}_{s+1}) \norm{T_\eps T_L f}_s,
\end{align}
where we applied the Young convolution inequality exactly as in the proof of Proposition \ref{prop-delta-t}. It remains to deal with the last term above. Thus we first appeal to \eqref{bd:TtildeL} and after we employ \eqref{eq:normbdneum}, so that
\begin{align*}
\norm{T_\eps T_Lf}_s & \le c_s (\norm{g-1}_{s+1}+\norm{b}_{s+1})\norm{T_L f}_{s}
\le \dfrac{c_s}{1-c_s \eps_0} (\norm{g-1}_{s+1}+\norm{b}_{s+1})\norm{f}_{s},
\end{align*}
which yields \eqref{bd:TildeB} thanks to the smallness assumptions \eqref{ass:small}, where $\eps_0 \ge \eps$ satisfies 
\begin{align*}
\ \beta \eps_0 \left(1+\dfrac{4  c_s }{1-c_s \eps_0}\right) < 1,
\end{align*}
so that $T_B$ in \eqref{def:TB} is well-defined and $(I-B_\eps)$ is invertible. The bound \eqref{bd:norm_TBL} directly follows from its definition \eqref{def:TB} and the bound on $B_\eps$.
The proof is over.
\end{proof}

\subsection{Notation and conventions}
We highlight the following conventions, which will be used throughout the paper.
\begin{itemize}
\item As already pointed out before, $p'(t; k, \eta)$ stands for the time-derivative of the Fourier symbol $p(t; k, \eta)$. This notation is systematically applied to the Fourier symbols of the weights involved in our $H^s$-weighted norm.
\item We occasionally identify operators with their symbols in order to avoid additional notations (this is for instance the case of the operator $\Delta_t^{-1}$, as pointed out in \eqref{eq:EulerBmovek} and the lines below).
\item We drop the subscript $k$ related to the $X$-Fourier-localization of the variables defined in \eqref{def:F-transform-x} and we also identify functions with their $Y$-Fourier transform in \eqref{def:Fourier-transform} when there is no confusion. The adoption of this convection is usually pointed out at the beginning of the proofs.
\item We use the notation $f_1\lesssim f_2$ if there exists a constant $C=C(R,\beta,s)$ such that $f_1\leq Cf_2$,  where $C$ is independent
of $k$, $\eps$ and $\beta$. We denote $f_1\approx f_2$ if $f_1\lesssim f_2$ and $f_2\lesssim f_1$.
\end{itemize}

\section{The Couette flow}\label{sec:shearsCouette}
We begin our analysis with the Couette flow, when $U(y)=y$. In this case, in \eqref{def-g-b} we have $g\equiv 1$ and $b\equiv 0$, so that $B_t$ in \eqref{def:Bt} coincides with $B_L$ in \eqref{def:BL}, so that 
system \eqref{eq:EulerBmovek} takes the simpler form
\begin{equation}\label{eq:couettek}
\begin{aligned}
 \dt \Theta_k&=-ik\RR Q_k-ik\beta \Delta_L^{-1}B_L\Theta_k,\\
\dt \QQ_k&=ik \Delta_L^{-1}\Theta_k+ik \Delta_L^{-1}(B_L-1)\Theta_k,
\end{aligned}
\end{equation}
where
\begin{align}\label{def:theta-Couette}
\Theta_k=B_L^{-1}\Omega_k.
\end{align}
We remark again that $B_L$ is just a Fourier multiplier, and hence commutes with $\Delta_L^{-1}$. One can explicitly write its symbol and the symbol of its inverse operator, while the lower and upper bounds were already provided in \eqref{eq:BLboundbelow}.

System \eqref{eq:couettek} is a non-autonomous dynamical system for each fixed frequency $(k,\eta)$. Although the proof carried out
in the next Section \ref{sec:shearsClose}
for more general shears applies in this case as well, we consider a different point of view,
and argue \emph{point-wise} in both $k$ and $\eta$. 
From now on, we slightly abuse notation and identify $\Theta_k$ and $Q_k$ with their $Y$-Fourier transforms $\widehat{\Theta}_k$ and $\widehat{Q}_k$. Our result reads as follows.
\begin{theorem}\label{thm:Couette-main}
Let $\beta\geq 0$ and $k\neq 0$.  The solution to \eqref{eq:couettek} satisfies the uniform bounds
\begin{align}\label{eq:estimatecouette}
 |p^{-\frac14}\Theta_k(t)|^2+|p^{\frac14}Q_k(t)|^2\approx  |(k^2+\eta^2)^{-\frac{1}{4}} \Theta_k(0)|^2 + |(k^2+\eta^2)^\frac{1}{4} Q_k(0)|^2,\qquad \forall t\geq 0,
\end{align}
point-wise in $\eta\in \R$. In particular, thanks to \eqref{eq:BLboundbelow},
\begin{align}\label{eq:estimatecouette2}
 |p^{-\frac14}\Omega_k(t)|^2+|p^{\frac14}Q_k(t)|^2\approx  |(k^2+\eta^2)^{-\frac{1}{4}} \Omega_k(0)|^2 + |(k^2+\eta^2)^\frac{1}{4} Q_k(0)|^2,\qquad \forall t\geq 0,
\end{align}
point-wise in $\eta\in \R$.
\end{theorem}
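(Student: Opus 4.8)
The plan is to track the evolution of a suitable quadratic energy for the pair $(\Theta_k, Q_k)$ that is built from the symbols $p^{-1/2}$ and $p^{1/2}$, and to show it is essentially conserved. Since the system \eqref{eq:couettek} is a non-autonomous $2\times 2$ ODE at each fixed $(k,\eta)$, the first step is to symmetrize it. The natural guess is to pass to the rescaled variables $z_1 = p^{-1/4}\Theta_k$ and $z_2 = p^{1/4} Q_k$ (with a factor of $\sqrt{R}$ somewhere, and possibly a correction involving $B_L$), compute $\dt z_1$ and $\dt z_2$, and check that the ``off-diagonal'' coupling terms $-ikR Q_k$ and $ik\Delta_L^{-1}\Theta_k = -ik p^{-1} \Theta_k$ combine, after the rescaling, into an antisymmetric pair — because $p^{-1/4}\cdot(-ikR)\cdot p^{-1/4}$ and $p^{1/4}\cdot(-ik p^{-1})\cdot p^{1/4} = -ik p^{-1/2}$ both have modulus $\sim k p^{-1/2}$, so up to the constant $\sqrt R$ they cancel in the energy. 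The diagonal contributions then come from two sources: the time derivative of the weights $p^{\pm 1/4}$, which produces terms proportional to $p'/p$, and the ``non-transport'' terms carrying the $\beta$-factor, namely $-ik\beta\Delta_L^{-1}B_L\Theta_k$ and $ik\Delta_L^{-1}(B_L-1)\Theta_k$.

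The key algebraic step is to observe that $\frac{\dd}{\dd t} p^{\pm 1/4} = \pm\frac14 p^{-1} p' \cdot p^{\pm 1/4}$, so the weight-derivative terms contribute $\pm\frac14 (p'/p)|z_j|^2$ to $\frac{\dd}{\dd t}(|z_1|^2+|z_2|^2)$ — equal and opposite, hence canceling. What remains is to estimate the $\beta$-terms. Here one uses the explicit symbol of $B_L$ from \eqref{eq:BLsymb}, namely $B_L = (1+ i\beta(\eta-kt)/p)^{-1} = (1 - i\beta(\eta-kt)/p)\cdot|B_L|^2$, and in particular $B_L - 1 = -i\beta(\eta-kt)p^{-1} B_L = i\beta p'(2k)^{-1} p^{-1} B_L$. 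So $ik\Delta_L^{-1}(B_L-1)\Theta_k = -ik p^{-1}(B_L-1)\Theta_k$ has modulus controlled by $\beta k |\eta-kt| p^{-2}|B_L| \lesssim \beta p^{-3/2}|B_L|$ times $|\Theta_k|$, which in the rescaled energy is integrable in time since $\int_0^\infty p^{-3/2}\,\dd t < \infty$ for $k\neq 0$ (this is the standard $\int \langle \eta - kt\rangle^{-3}$-type bound). The $\beta$-term in the $\Theta$-equation is genuinely conservative in nature: when forming $\Re(\dt z_1 \bar z_1)$, the term $-ik\beta p^{-1/2} B_L |\Theta_k|^2 \cdot p^{-1/2}$... one should be careful here — $-ik\beta p^{-1}B_L$ is not purely imaginary because $B_L$ is complex, so its real part, proportional to $k\beta (\eta-kt)p^{-1}\,\mathrm{Im}\,B_L/\ldots$, again carries a factor $p^{-1}\cdot|\eta - kt|/p \lesssim p^{-3/2}$, hence is time-integrable.

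Putting this together: one sets $E(t) = |p^{-1/4}\Theta_k(t)|^2 + R\,|p^{1/4}Q_k(t)|^2$ (the factor $R$ chosen to make the leading coupling exactly antisymmetric), differentiates, cancels the antisymmetric coupling and the weight-derivative terms exactly, and bounds the remainder by $C(\beta,R)\, p^{-3/2}(k^2+(\eta-kt)^2)^{?}\,E(t)$ — more precisely by $h(t) E(t)$ with $\int_0^\infty h(t)\,\dd t \le C(\beta, R) k^{-2} \le C(\beta,R)$. Grönwall's inequality then gives $E(t) \approx E(0)$ uniformly in $t$ and $\eta$, which upon comparing $p(0;k,\eta) = k^2+\eta^2$ with $p(t;k,\eta)$ in the weights yields \eqref{eq:estimatecouette}; the bound \eqref{eq:estimatecouette2} is immediate from $|B_L|\approx 1$ in \eqref{eq:BLboundbelow} and $\Theta_k = B_L^{-1}\Omega_k$. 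The main obstacle I anticipate is not the Grönwall step but the bookkeeping in the symmetrization: verifying that after the $p^{\pm1/4}$ rescaling the leading coupling terms really do cancel (rather than merely being bounded), and isolating from the $\beta$-terms precisely the ``bad'' real parts, showing each of them carries the extra $p^{-1/2}$ decay needed for time-integrability. One subtlety to watch is the behavior near the critical time $t = \eta/k$, where $p$ attains its minimum $k^2$; there $p^{-3/2} \sim |k|^{-3}$ is largest, but the time-integral $\int p^{-3/2}\,\dd t$ is still finite, so no logarithmic loss appears — this is exactly why the Couette case gives the sharp rates with no weights, unlike $R \le 1/4$.
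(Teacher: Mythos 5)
The central algebraic claim in your proposal is false: the diagonal weight-derivative contributions do \emph{not} cancel in the energy $E=|z_1|^2+R|z_2|^2$. Differentiating gives $\dot E = -\tfrac12\tfrac{p'}{p}|z_1|^2 + \tfrac12 R\tfrac{p'}{p}|z_2|^2 + (\text{coupling, which does cancel}) + (\beta\text{-errors})$. The two surviving terms carry the \emph{same} factor $\pm\tfrac{p'}{p}$ but multiply \emph{different} quantities $|z_1|^2$ and $|z_2|^2$, so they are not ``equal and opposite'' unless $|z_1|=|z_2|$. Moreover $\tfrac{p'}{p}$ is not time-integrable ($\int_0^T\tfrac{p'}{p}\,\dd t=\log\tfrac{p(T)}{p(0)}$ grows logarithmically), so a Gr\"onwall bound on $E$ as you have set it up gives at best $E(t)\lesssim p(t)^{1/2}E(0)$, far from the claimed uniformity. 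A further warning sign is that your argument would never produce the $R>1/4$ threshold, which the theorem (implicitly, via the paper's coercivity lemma) genuinely requires.

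The missing ingredient is a cross term in the quadratic form. The paper's energy is
\begin{equation}
E(t)=\frac12\left[|Z_1|^2+|Z_2|^2+\frac{1}{2k\sqrt R}\,\Re\left(p'p^{-\frac12}Z_1\overline{Z_2}\right)\right],\qquad Z_1=p^{-\frac14}\Theta_k,\ \ Z_2=i\sqrt R\,p^{\frac14}Q_k,
\end{equation}
and the whole point is that $\ddt\Re\bigl(p'p^{-\frac12}Z_1\overline{Z_2}\bigr)$ produces a leading contribution $-k\sqrt R\,\tfrac{p'}{p}\bigl[|Z_2|^2-|Z_1|^2\bigr]$, which, once multiplied by $\tfrac{1}{4k\sqrt R}$, \emph{exactly} cancels the diagonal $\pm\tfrac14\tfrac{p'}{p}$ terms from $\tfrac12\ddt(|Z_1|^2+|Z_2|^2)$. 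What remains after this cancellation are genuinely time-integrable pieces: $(p'p^{-\frac12})'=2k^4p^{-\frac32}$, and the $\beta$-terms you correctly identified as carrying an extra $p^{-\frac12}$. All of these are bounded by a constant times $k^2/p$, whose time integral is finite, and then Gr\"onwall closes. Coercivity of this augmented $E$ requires $\tfrac{1}{2|k|\sqrt R}|p'p^{-\frac12}|\le\tfrac{1}{\sqrt R}<2$, i.e.\ $R>1/4$, which is precisely the Miles--Howard threshold and is invisible in your version of the argument. So your identification of the rescaling $p^{\pm1/4}$, the antisymmetric coupling cancellation, and the $p^{-3/2}$-type decay of the $\beta$-terms are all on the right track, but without the cross term the estimate does not close.
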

The pre-factors $p^{\pm\frac14}$ in \eqref{eq:estimatecouette2} appear in view of a natural symmetrization of 
system \eqref{eq:couettek} that we carry out in the next Section \ref{sub:Couettesym}.  
Now we show that the proof of the main Theorem \ref{thm:CloseCouette} in the case the Couette flow (namely, $\eps=0$) follows from \eqref{eq:estimatecouette}.

\begin{proof}[Proof of Theorem \ref{thm:CloseCouette} for Couette]

Recall that from \eqref{eq:biot}  the velocity field (in the moving fra\-me) is given by
\begin{align}\label{def:velocity}
\boldsymbol{V}_k=(V^x_{k}, V^y_{k})
=(-(\partial_Y -t\partial_X)\Delta_L^{-1} \Omega_k, \partial_X \Delta_L^{-1} \Omega_k).
\end{align}
Thus, using that $|\eta-kt|^2 p^{-1} \le 1$,  from \eqref{eq:estimatecouette2} we deduce 

\begin{align}\label{estimate-u}
\|V_{k}^x(t)\|_{L^2}^2 & = \int_\mathbb{R} \dfrac{|\eta-kt|^2}{p^\frac{3}{2}\langle (k, \eta)\rangle} \, |p^{-\frac{1}{4}}\Omega_k(t)|^2 \, \langle (k, \eta) \rangle \dd\eta  \leq \int_\mathbb{R} \dfrac{1}{p^\frac{1}{2}\langle (k, \eta)\rangle} \, |p^{-\frac{1}{4}}\Omega_k(t)|^2 \, \langle (k, \eta) \rangle  \dd\eta \notag\\
&\lesssim \frac{1}{\langle t \rangle} \left[\frac{1}{|k|^2}\|\Omega_k(0)\|_{L^2}^2+\|Q_k(0)\|_{1}^2\right].
\end{align}
Similarly,

\begin{align}\label{estimate-v}
\|V_{k}^y(t)\|_{L^2}^2 &\lesssim \frac{1}{\langle t \rangle^3}\left[\frac{1}{|k|^2}\|\Omega_k(0)\|_{1}^2+\|Q_k(0)\|_{2}^2\right],
\end{align}
and 

\begin{align}\label{estimate-q}
\|Q_k(t)\|_{L^2}^2 &\lesssim \frac{1}{\langle t \rangle} \left[\frac{1}{|k|^2}\|\Omega_k(0)\|_{L^2}^2+\|Q_k(0)\|_{1}^2\right].
\end{align}
Thus, Theorem \ref{thm:CloseCouette} is proven.

\end{proof}

We can also give a proof of Corollary \ref{cor:Lyinst} directly from Theorem \ref{thm:Couette-main}.

\begin{proof}[Proof of Corollary \ref{cor:Lyinst}]

	Just by looking at the lower bound in \eqref{eq:estimatecouette2}, we have  
	\begin{align}\label{bd:OmkC}
	\|\Omega_k(t)\|_{L^2}^2+\|p^\frac12 Q_k(t)\|_{L^2}^2
	&= \int_\mathbb{R} p^\frac{1}{2}  \left[ |p^{-\frac{1}{4}}\Omega_k(t)|^2  + |p^{\frac{1}{4}}Q_k(t)|^2 \right]   \dd\eta\notag\\ 
	&\gtrsim \l t\r \left[\|\Omega_k(0)\|_{-1}^2+|k|^{2}\|Q_k(0)\|_{L^2}^2\right],
	\end{align}
	which proves the lower bound. The upper bound is similar.
\end{proof}

\subsection{Symmetric variables}\label{sub:Couettesym}

Since \eqref{eq:couettek} decouples in the $X$-Fourier variable, we fix a nonzero integer $k$ and define the auxiliary variables 
\begin{equation}\label{eq:Z1Z2couette}
Z_1:= p^{-\frac{1}{4}} \Theta_k, \qquad Z_2:=p^\frac{1}{4}i \sqrt{R} Q_k.
\end{equation}
Taking into account \eqref{eq:NeumBt}, we then find
\begin{align}\label{eq:symm-matrixCouette}
\de_t\begin{pmatrix}
Z_1\\
Z_2
\end{pmatrix}
=\begin{pmatrix}
-\dfrac{1}{4}\dfrac{p'}{p} & -k\sqrt{R}p^{-\frac{1}{2}}\\
k\sqrt{R}p^{-\frac{1}{2}}  & \dfrac{1}{4}\dfrac{p'}{p}
\end{pmatrix}
\begin{pmatrix}
Z_1\\
Z_2
\end{pmatrix}
+\begin{pmatrix}
\beta\dfrac{ik}{p} B_L & 0\\
k\sqrt{R}p^{-\frac{1}{2}}(B_L-1)  & 0
\end{pmatrix}
\begin{pmatrix}
Z_1\\
Z_2
\end{pmatrix}.
\end{align}
The first matrix in \eqref{eq:symm-matrixCouette} is what determines most of the behavior of the system, while the second one is
considered as a remainder (i.e. time-integrable). To exploit this structure,
we define the point-wise energy functional as 
\begin{align}\label{def:pointwise-functional-Couette}
E(t)&=\frac12\left[|Z_1(t)|^2+|Z_2(t)|^2+\frac{1}{2k\sqrt{R}} \Re  \left(p' p^{-\frac12} Z_1(t) \overline{Z_2(t)}\right)\right].
\end{align}
The last term is simply coming from the ratio between the diagonal and non-diagonal entries in the first matrix in \eqref{eq:symm-matrixCouette}.
Now using that $|p'|p^{-\frac12}\leq 2|k|$, we deduce that
\begin{align}\label{eq:estimate-mixed-termCouette}
\frac{1}{2|k|\sqrt{R}} \left| p' p^{-\frac12} Z_1 \overline{Z_2} \right|\leq\frac{1}{\sqrt{R}}|Z_1||Z_2|\leq\frac{1}{2\sqrt{R}}\left(|Z_1|^2+|Z_2|^2\right).
\end{align}
As a consequence,  the functional $E$ is coercive whenever $R>1/4$, namely
\begin{align}\label{eq:coercive-pointwise}
\frac12\left(1-\frac{1}{2\sqrt{R}}\right)\left[|Z_1|^2+|Z_2|^2\right]\leq E\leq\frac12\left(1+\frac{1}{2\sqrt{R}}\right)\left[|Z_1|^2+|Z_2|^2\right].
\end{align}

\begin{remark}
It is quite
interesting  that the Miles-Howard criterion is connected precisely to the positive definiteness of the quadratic form \eqref{def:pointwise-functional-Couette}
arising \emph{after} symmetrization. In the classical proof \cite{Howard61}, this is derived by a suitable conjugation that ensures that $R>1/4$ is a sufficient
condition for spectral stability.
\end{remark}

\subsection{Proof of Theorem \ref{thm:Couette-main}}
In account of the coercivity of $E(t)$, the proof of Theorem \ref{thm:Couette-main} can be obtained by estimating $E(t)$, which will be done via a Gr\"onwall estimate. Hence, we need to compute the time-derivative of $E$.
We first notice that
\begin{align}
\frac12\ddt |Z_1|^2=\Re(\de_tZ_1 \overline{Z_1})= -\dfrac{1}{4}\dfrac{p'}{p} |Z_1|^2-\beta\dfrac{k}{p}\Im(B_L) |Z_1|^2- k\sqrt{R}p^{-\frac{1}{2}}\Re(Z_1\overline{Z_2}).
\end{align}
Similarly,
\begin{align}
\frac12\ddt |Z_2|^2= \dfrac{1}{4}\dfrac{p'}{p} |Z_2|^2 +k\sqrt{R}p^{-\frac{1}{2}} \Re(Z_1\overline{Z_2})+k\sqrt{R}p^{-\frac{1}{2}} \Re((B_L-1)Z_1\overline{Z_2}).
\end{align}
Regarding the last term, 
we  find that
\begin{align}
\ddt\Re  \left(p' p^{-\frac12} Z_1 \overline{Z_2}\right)
&=\left(p' p^{-\frac12}\right)'\Re\left(Z_1\overline{Z_2}\right)+\beta k\frac{p'}{p^\frac32} \Re\left(i B_L Z_1\overline{Z_2}\right)\notag\\
&\quad- k\sqrt{R} \,\frac{p'}{p}\left[|Z_2|^2-|Z_1|^2\right]+k\sqrt{R} \,\frac{p'}{p}\Re( B_L-1)|Z_1|^2.
\end{align}
Therefore, the energy function $E$ satisfies
the equation
\begin{align}\label{eq:energqueq}
	\ddt E=\sum_{i=1}^5\mathcal{I}_i.
\end{align}
where the error terms are defined as  
\begin{align}
	\mathcal{I}_1=&\ \frac{1}{4k\sqrt{R}} \left(\frac{ p'}{p^\frac12}\right)'\Re\left(Z_1\overline{Z_2}\right),\\
	\mathcal{I}_2=&\ \frac{k\sqrt{R}}{p^{\frac{1}{2}}} \Re((B_L-1)Z_1\overline{Z_2}),\\
	\mathcal{I}_3=&\ \frac14\frac{p'}{p}\Re( B_L-1)|Z_1|^2,\\
	\mathcal{I}_4=&\ -\beta\dfrac{k}{p}\Im(B_L) |Z_1|^2,\\
	\mathcal{I}_5=&\ \frac{\beta}{4\sqrt{R}} \frac{p'}{p^\frac32} \Re\left(i B_L Z_1\overline{Z_2}\right).
\end{align}
We now proceed to show that each $\mathcal{I}_i$ has good time-integrability, and therefore we can close the Gr\"onwall estimate which allow us to prove Theorem \ref{thm:Couette-main}. 

To control $\mathcal{I}_1$, notice that 
\begin{equation}
	\label{bd:dtdtp}
	 \left(\frac{p'}{p^{\frac12}}\right)' =\frac{2 k^2}{p^{\frac12}}-\frac12 \frac{(p')^2}{p^{\frac32}}=\frac{2k^4}{p^{\frac32}}= \frac{2|k|}{(1+|t-\frac{\eta}{k}|^2)^\frac32}.
\end{equation}
Therefore 
\begin{equation}
	\label{bd:I1strat}
	|\mathcal{I}_1|\leq \frac{1}{4\sqrt{R}} \frac{1}{(1+|t-\frac{\eta}{k}|^2)^\frac32}(|Z_1|^2+|Z_2|^2).
\end{equation}
Then, observe that the multiplier $B_L$ is contained in all the remaining terms. We can compute it explicitly from \eqref{eq:BLsymb}, namely, we know that $B_L^{-1}=1+i\beta(\eta-kt)p^{-1}=(p+i\beta(\eta-kt))p^{-1}$, hence
\begin{equation}
	B_L=\frac{p^2}{p^2+\beta^2(\eta-kt)^2}-i\beta p\frac{\eta-kt}{p^2+\beta^2(\eta-kt)^2}.
\end{equation}
In particular we have 
\begin{align}
	\label{bd:BL} |B_L|\leq& \ 1+\beta,\\
	\label{bd:ImBL}|\Im(B_L)|\leq& \ \frac{\beta}{p^{\frac12}},\\
	\label{bd:ReBL-1}|\Re(B_L-1)|=&\ \beta^2\frac{(\eta-kt)^2}{p^2+\beta^2(\eta-kt)^2}\leq \frac{\beta^2}{p},\\
	\label{bd:BL-1} |B_L-1|\leq & |\Re(B_L-1)|+|\Im(B_L)|\leq \frac{\beta+\beta^2}{p^{\frac12}}.
\end{align}
By using the bounds above and the fact that $|p'|\leq 2|k|p^{\frac12}$ we infer 
\begin{align}
	|\mathcal{I}_2|+|\mathcal{I}_5|\lesssim& \  \frac{|k|}{p}(|Z_1|^2+|Z_2|^2),\\
	|\mathcal{I}_3|+|\mathcal{I}_4|\lesssim&\    \frac{|k|}{p^\frac32}|Z_1|^2.
\end{align}
Combining the bounds above with \eqref{bd:I1strat}, we deduce that 
\begin{equation}
	|\sum_{i=1}^5 \mathcal{I}_i|\lesssim \frac{1}{1+(t-\frac{\eta}{k})^2}=\frac{k^2}{p},
\end{equation}
where we have roughly bounded all the terms with $k^2 p^{-1}$. Consequently, by the coercivity properties of $E$, see \eqref{eq:coercive-pointwise}, we get   
\begin{align}\label{eq:lowerbound}
	- \dfrac{1}{2 \sqrt{R}-1}\dfrac{|k|^2}{p}E \lesssim \ddt E \lesssim \dfrac{1}{2 \sqrt{R}-1}\dfrac{|k|^2}{p}E.
\end{align}
Now, since $\int_0^\infty |k|^2 p^{-1} \dd t \leq \pi/2$, we can apply the Gr\"onwall lemma to \eqref{eq:lowerbound} and obtain
\begin{align}
	E(t)\approx E(0).
\end{align}
This translates immediately into \eqref{eq:estimatecouette} thanks to \eqref{eq:coercive-pointwise}, thereby 
concluding the proof of Theorem \ref{thm:Couette-main}. \qed

\section{Shears close to Couette}
\label{sec:shearsClose}
This section deals with the full system \eqref{eq:EulerBmovek}. Using the expressions of $\Delta_t^{-1}$ and $B_t$ in \eqref{def:Deltat-1}-\eqref{def:TL} and \eqref{eq:NeumBt}-\eqref{def:TB}, the system reads
	\begin{align}
	\label{eq:pertofCouetteTheta}\dt \Theta_k&=-ik\RR Q_k+ik\left(b(Y)-\beta g(Y)\right) \Delta_t^{-1}B_t\Theta_k,\\
	\label{eq:pertofCouetteQ}\dt \QQ_k&=ik \Delta_L^{-1}\Theta_k + ik \Delta_L^{-1}(B_L-1)\Theta_k+ik \Delta_L^{-1}T_{\eps} T_L B_L\Theta_k+ik \Delta_t^{-1}B_\eps  B_t\Theta_k.
	\end{align}
We have expanded the right-hand side above to highlight  the similarities with the Couette flow  \eqref{eq:couettek}. In the first equation
\eqref{eq:pertofCouetteTheta}, the second term in the right-hand side is treated as an error term, just like in the previous section. In the
second equation \eqref{eq:pertofCouetteQ}, we have expanded the operator $\Delta_t^{-1}$ and $B_t$ in order to extract the Couette-like structure,
while all the other terms are treated as errors.
We have the following result for shear flows near Couette.
\begin{theorem}\label{thm:Close-main}
	Let $R>1/4$, $\beta \geq 0$ and  $s\geq 0$ be fixed. There exist $C_0>1$, $\eps_0\in (0,1/(2C_0))$ with the following property. If
	$\eps\in(0,\eps_0]$ and
	\begin{align}\label{eq:assgbclose}
	\| g-1\|_{s+5}+\| b\|_{s+4}\leq \eps,
	\end{align}
	then for every $k\neq 0$ the solution to \eqref{eq:pertofCouetteTheta}-\eqref{eq:pertofCouetteQ} satisfies the uniform $H^s$-bound
	\begin{align}\label{eq:estimateClosecouette}
	\norm{p^{-\frac14}\Theta_k(t)}_s^2+\norm{p^{\frac14}Q_k(t)}_s^2 \lesssim 
	\jap{t}^\delta\left(\norm{\Theta_k(0)}_{s}^2+\norm{Q_k(0)}_{s+1}^2\right), \qquad \forall t\geq 0,
	\end{align}
	where $\delta=C_0\eps$  and $p$ is given by \eqref{def:p}. 
\end{theorem}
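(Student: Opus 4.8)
The plan is to mimic the point-wise argument of Theorem \ref{thm:Couette-main}, but now at the level of the $H^s$-norm, treating the general shear as a perturbation of Couette and allowing a slow polynomial growth to absorb the $\eps$-sized error terms that are no longer time-integrable. First I would pass to the symmetric variables, setting $Z_1 = p^{-1/4}\Theta_k$ and $Z_2 = p^{1/4} i\sqrt{R}\, Q_k$ exactly as in \eqref{eq:Z1Z2couette}, and rewrite the system \eqref{eq:pertofCouetteTheta}--\eqref{eq:pertofCouetteQ} in the form
\begin{align}
\dt\begin{pmatrix} Z_1 \\ Z_2 \end{pmatrix}
= \begin{pmatrix} -\frac14\frac{p'}{p} & -k\sqrt{R}p^{-\frac12} \\ k\sqrt{R}p^{-\frac12} & \frac14\frac{p'}{p} \end{pmatrix}
\begin{pmatrix} Z_1 \\ Z_2 \end{pmatrix}
+ \mathcal{R},
\end{align}
where the remainder $\mathcal{R}$ collects the $B_L-1$ term, the $\beta$-term, and \emph{in addition} the genuinely new contributions $ik(b-\beta g)\Delta_t^{-1}B_t\Theta_k$ from the first equation and $ik\Delta_L^{-1}T_\eps T_L B_L\Theta_k + ik\Delta_t^{-1}B_\eps B_t\Theta_k$ from the second. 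The key structural point, inherited from the Couette case, is that the diagonal-to-off-diagonal ratio is again $\frac{p'}{p^{1/2}}/(4k\sqrt{R})$, so one uses the \emph{same} correction term $\frac{1}{2k\sqrt{R}}\Re(p'p^{-1/2}Z_1\overline{Z_2})$ inside the energy; the coercivity bound \eqref{eq:coercive-pointwise} holds verbatim since $R>1/4$ and $|p'|p^{-1/2}\le 2|k|$ are unchanged.

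Next I would define the weighted energy functional by integrating the point-wise quantity against $\langle(k,\eta)\rangle^{2s}$, i.e. something like
\begin{align}
E_s(t) = \frac12 \int_\R \langle(k,\eta)\rangle^{2s}\left[|Z_1(t)|^2 + |Z_2(t)|^2 + \frac{1}{2k\sqrt{R}}\Re\left(p'p^{-\frac12}Z_1(t)\overline{Z_2(t)}\right)\right]\dd\eta,
\end{align}
and possibly sharpen it with an Alinhac-type ghost weight (the paper hints at this in the introduction) multiplying the integrand by a bounded, monotone-in-time factor whose derivative produces a good negative term controlling the worst error. Differentiating $E_s$ in time, the ``old'' errors $\mathcal{I}_1,\dots,\mathcal{I}_5$ contribute, after integration in $\eta$, a bound of size $\frac{k^2}{p}E_s$ which is time-integrable and thus harmless, exactly as before. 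The new errors must be estimated using Propositions \ref{prop-delta-t} and \ref{prop:Bt}: on $H^s$ one has $\|\Delta_t^{-1}\|\lesssim \|T_L\| \lesssim (1-c_s\eps)^{-1}$, $\|T_B\|\le 2$, $\|B_\eps\|\lesssim\eps$, while the prefactor $b-\beta g$ (resp. $T_\eps$) carries a factor $\|b\|_{s+\cdots}+\|g-1\|_{s+\cdots}\lesssim\eps$ via the Young convolution argument used in those proofs. One also needs the commutator/weight-transfer fact that conjugating $\Delta_L^{-1}$, $\Delta_t^{-1}$, $B_t$ by $\langle(k,\eta)\rangle^{s}$ costs at most a bounded factor (this is why the loss of $4$--$5$ derivatives in \eqref{eq:assgbclose} relative to $s$ appears: the $T_\eps$-estimate eats one derivative per application as in \eqref{bd:TtildeL}, and $b$ contributes $(\de_Y - t\de_X)$ which must be paid for against $\Delta_L^{-1}$). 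The upshot is that each new error is bounded by $\eps$ times a time factor like $\frac{|k|}{p^{1/2}}$ or even just $\frac{1}{\langle t\rangle}$ times $E_s$, i.e. \emph{not} integrable but only logarithmically divergent in the sense that $\int_0^t \frac{C_0\eps}{\langle\tau\rangle}\dd\tau \le C_0\eps\log\langle t\rangle$.

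Then the Gr\"onwall step delivers $E_s(t) \lesssim \langle t\rangle^{C_0\eps} E_s(0)$, which by coercivity is exactly \eqref{eq:estimateClosecouette} with $\delta = C_0\eps$, after translating $E_s$ back into $\|p^{-1/4}\Theta_k\|_s^2 + \|p^{1/4}Q_k\|_s^2$ and noting $\|Z_2\|_s \approx \sqrt{R}\|p^{1/4}Q_k\|_s$ and that the initial data term $\langle(k,\eta)\rangle^{2s}|(k^2+\eta^2)^{\pm1/4}\cdots|^2$ evaluated at $t=0$ is controlled by $\|\Theta_k(0)\|_s^2 + \|Q_k(0)\|_{s+1}^2$ (the extra derivative on $Q$ absorbing the $p^{1/4}$ at $t=0$, since $p(0;k,\eta)^{1/4}\lesssim \langle(k,\eta)\rangle^{1/2}$). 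I expect the main obstacle to be precisely the bookkeeping of the weighted operator norms: one must verify that $\langle(k,\eta)\rangle^s$ can be moved past $\Delta_t^{-1}B_t$ and the multiplier $b-\beta g$ at the cost of only finitely many extra derivatives on the profile, uniformly in $k$, in $\eps$, and — crucially, for the stratified case — in $\beta$, since $\beta$ enters $B_t$ and $B_\eps$; this $\beta$-uniform tracking (flagged in the introduction as the delicate point enabling the exponentially stratified regime) is where the ghost weight, carefully tuned with a $\beta$-dependent constant, earns its keep. A secondary subtlety is ensuring the time factors genuinely integrate to $O(\eps\log\langle t\rangle)$ rather than $O(\eps t)$: the worst new term, coming from $ik\Delta_L^{-1}T_\eps T_L B_L\Theta_k$ paired with $Z_2$, behaves like $\eps|k|p^{-1/2}|Z_1||Z_2|$, and $\int_0^\infty |k|p^{-1/2}\dd t$ diverges logarithmically — so one really does need $\langle t\rangle^{\delta}$ and cannot hope for a uniform bound, consistently with the $\eps$-losses predicted by the Miles--Howard discussion.
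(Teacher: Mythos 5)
The high-level architecture you describe (symmetric variables, $H^s$-weighted energy functional with the $R>1/4$ cross term, frequency-exchange to control $\Delta_t^{-1}$, $B_t$, $T_\eps$, $B_\eps$ at the cost of finitely many derivatives, translation at $t=0$ via $p(0)^{1/4}\lesssim\langle(k,\eta)\rangle^{1/2}$) matches the paper. But the way you propose to close the argument — a direct Gr\"onwall inequality $\ddt E_s \lesssim \eps\,|k|p^{-1/2}E_s$ with the time factor subsequently integrated to $\eps\log\jap{t}$ — does not work, and this is the core of the proof, not a side issue.

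The obstacle is that $|k|p^{-1/2}(t;k,\eta)$ is a frequency-dependent multiplier, not a scalar function of $t$. After the frequency-exchange needed to handle the convolutions hidden in $T_\eps$, $B_\eps$, the errors take the form
\begin{align}
|\mathcal{R}| \lesssim \eps \left\| \sqrt{\tfrac{w'}{w}} Z_1\right\|_s \left\| \sqrt{\tfrac{w'}{w}} Z_2\right\|_s, \qquad \tfrac{w'}{w}(t;k,\eta)=\tfrac14\tfrac{|p'|}{p}(t;k,\eta),
\end{align}
and there is no $\eta$-uniform bound of the type $\tfrac{|p'|}{p}\lesssim\phi(t)$ with $\int_0^\infty\phi\lesssim\log\jap{t}$: for each $t$ there is a resonant $\eta\approx kt$ where $\tfrac{|p'|}{p}\approx|k|$, so $\sup_\eta\tfrac{|p'|}{p}$ is not even decaying, let alone integrable-to-log. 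Pulling such a multiplier out of the $\eta$-integral as a Gr\"onwall coefficient is precisely the step that fails. The resolution in the paper is not an optional sharpening: the ghost weight $m=m_1w^\delta$ is built \emph{into} the definition of $Z_1,Z_2$, so that differentiating $E_s$ produces the matching artificial dissipation $\norm{\sqrt{m'/m}\,Z_1}_s^2+\norm{\sqrt{m'/m}\,Z_2}_s^2$, with $m'/m=\delta\,w'/w+C_\beta|k|^2/p$. After the frequency-exchange lemmas, every error is dominated frequency-by-frequency by $\tfrac{\eps}{\delta}$ times this dissipation (plus terms dominated by $\tfrac{1}{C_\beta}$ times the $m_1$-part), so choosing $\delta=C_0\eps$ with $C_0$ large and $C_\beta$ as in \eqref{def:Cbeta} makes $E_s$ non-increasing. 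Gr\"onwall is replaced by a monotonicity statement (Proposition \ref{prop:Esfinal}), and the polynomial loss $\jap{t}^\delta$ only appears a posteriori when unwrapping the bounded weight $m_1$ and the growing weight $w^\delta\lesssim(\jap{t}^{1/2}\jap{k,\eta})^\delta$. Your proposal should be reformulated to incorporate the weight $m$ from the start; as written, the Gr\"onwall step has a genuine gap.

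A secondary quantitative point: with the weight absent, the worst errors pair $Z_1$ against $T_\eps T_L$-type contributions, and you explicitly identify these as $\eps|k|p^{-1/2}|Z_1||Z_2|$; but this is a pointwise statement at a single frequency, while $T_\eps$ is a convolution. The pointwise heuristic does not survive the $\eta$-integration without the exchange lemmas \ref{lem-exchange-freq-weights}--\ref{lem-exchange-m1-w} and the derived $m$-dissipation, which is why the derivative losses in \eqref{eq:assgbclose} are $s+5$ and $s+4$ (they pay for moving $\langle\cdot\rangle^s$, $p^{-1/4}$, $m^{-1}$, and $\sqrt{m'/m}$ past the convolution kernels).
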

The bound on the vorticity follows directly.
\begin{corollary}
	\label{cor:Omega}
	Under the same assumptions of Theorem \ref{thm:Close-main}, for $k\neq 0$ the following inequality holds true
	\begin{equation}
	\norm{p^{-\frac14}\Omega_k(t)}^2_s+\norm{p^{\frac14}Q_k(t)}^2_s \lesssim \ \jap{t}^\delta\left(\norm{\Omega_k(0)}_{s}^2+\norm{Q_k(0)}_{s+1}^2\right), \qquad \forall t\geq 0.
	\end{equation}
\end{corollary}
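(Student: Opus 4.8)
\textbf{Proof proposal for Corollary \ref{cor:Omega}.}
The idea is to transfer the estimate of Theorem \ref{thm:Close-main} from $\Theta_k$ to $\Omega_k = B_t\Theta_k$ through the factorization $B_t = T_B B_L$ of Proposition \ref{prop:Bt}. Since $B_L$ is a Fourier multiplier with $\tfrac{1}{\sqrt{1+\beta^2}}\le|B_L|\le 1$ by \eqref{eq:BLboundbelow}, it commutes with the Fourier multiplier $p^{\pm\frac14}$ and has $\norm{B_L}_{H^s\to H^s}\le 1$. Writing
\[
p^{-\frac14}\Omega_k = \big(p^{-\frac14}T_B\,p^{\frac14}\big)\,B_L\,\big(p^{-\frac14}\Theta_k\big),
\]
the whole matter reduces to the uniform-in-$(t,k)$ operator bound $\norm{p^{-\frac14}T_B\,p^{\frac14}}_{H^s\to H^s}\lesssim 1$, after which $\norm{p^{-\frac14}\Omega_k(t)}_s\lesssim \norm{p^{-\frac14}\Theta_k(t)}_s$ and one only has to add the unchanged term $\norm{p^{\frac14}Q_k(t)}_s$ and replace $\norm{\Theta_k(0)}_s$ by $\norm{\Omega_k(0)}_s$ (using $\Theta_k(0)=B_0^{-1}\Omega_k(0)$ and $B_t^{-1}=B_L^{-1}(I-B_\eps)$, which is $H^s$-bounded since $\norm{B_L^{-1}}_{H^s\to H^s}\le 1+\beta$ and $\norm{B_\eps}_{H^s\to H^s}\lesssim\eps$ by \eqref{bd:TildeB}).

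The key new point is a commutator/conjugation estimate showing that passing the weight $p^{\frac14}$ through a multiplication operator costs only half a derivative. If $M_h$ denotes multiplication by a function $h=h(Y)$, then $p^{-\frac14}M_h p^{\frac14}$ has Fourier kernel $\ff_h(\eta-\xi)\,p^{\frac14}(\xi)/p^{\frac14}(\eta)$, and from $p^{\frac12}(\xi)\le p^{\frac12}(\eta)+|\eta-\xi|$ together with $p\ge k^2\ge 1$ one gets $p^{\frac14}(\xi)/p^{\frac14}(\eta)\lesssim\jap{\eta-\xi}^{\frac12}$, a bound uniform in $t$ and $k$. Tracking also the $\jap{(k,\eta)}^s$ weights exactly as in the proofs of Propositions \ref{prop-delta-t}--\ref{prop:Bt}, Young's convolution inequality then yields $\norm{p^{-\frac14}M_h p^{\frac14}}_{H^s\to H^s}\lesssim \norm{h}_{s+2}$. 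On the other hand all the building blocks appearing in $\Delta_L^{-1}$, $(\de_Y-t\de_X)\Delta_L^{-1}$, $(\de_Y-t\de_X)^2\Delta_L^{-1}$ and $B_L$ are Fourier multipliers with symbols bounded by $1$, hence commute with $p^{\frac14}$ and are $H^s$-bounded by $1$. Because conjugation by $p^{\frac14}$ distributes over compositions, the Neumann-series arguments of Propositions \ref{prop-delta-t} and \ref{prop:Bt} go through verbatim for the conjugated operators: $\norm{p^{-\frac14}T_\eps p^{\frac14}}_{H^s\to H^s}\lesssim\eps$, hence $\norm{p^{-\frac14}T_L p^{\frac14}}_{H^s\to H^s}\le 2$, $\norm{p^{-\frac14}B_\eps p^{\frac14}}_{H^s\to H^s}\lesssim\eps$, and finally $\norm{p^{-\frac14}T_B p^{\frac14}}_{H^s\to H^s}\le 2$, all uniformly in $t\geq 0$ and $k\neq 0$. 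The regularity actually used, $\norm{g-1}_{s+2}+\norm{b}_{s+2}\lesssim\eps$, is comfortably supplied by \eqref{eq:assgbclose}.

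Combining these ingredients: $\norm{p^{-\frac14}\Omega_k(t)}_s^2+\norm{p^{\frac14}Q_k(t)}_s^2 \lesssim \norm{p^{-\frac14}\Theta_k(t)}_s^2+\norm{p^{\frac14}Q_k(t)}_s^2 \lesssim \jap{t}^\delta\big(\norm{\Theta_k(0)}_s^2+\norm{Q_k(0)}_{s+1}^2\big) \lesssim \jap{t}^\delta\big(\norm{\Omega_k(0)}_s^2+\norm{Q_k(0)}_{s+1}^2\big)$, which is the claim. I expect the only genuinely delicate step to be the half-derivative conjugation bound $\norm{p^{-\frac14}M_h p^{\frac14}}_{H^s\to H^s}\lesssim\norm{h}_{s+2}$ and its uniform propagation through the various Neumann series — in particular checking that the $p^{\frac14}(\xi)/p^{\frac14}(\eta)$ kernel factor is controlled uniformly in the time $t$ and the frequency $k$, and that the extra half-derivative does not accumulate along the compositions defining $T_L$ and $B_t$; everything else is a repackaging of results already established in Section \ref{sec:prelim}.
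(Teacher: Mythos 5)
Your proposal is correct and follows essentially the same route as the paper: both reduce to the factorization $\Omega=B_tB_L^{-1}B_L\Theta=T_BB_L\Theta$, commute the weight $p^{-\frac14}$ through the operators arising from the Neumann series at the cost of half a derivative (which \eqref{eq:assgbclose} easily absorbs), and handle the initial data via boundedness of $B_t^{-1}$. The only cosmetic difference is that you phrase the key step as a uniform bound on the conjugated operator $p^{-\frac14}T_Bp^{\frac14}$, whereas the paper uses the equivalent absorption argument from $\Omega=B_L\Theta+B_\eps\Omega$ and the smallness $\norm{p^{-\frac14}B_\eps\,\cdot\,}_s\lesssim\eps\norm{p^{-\frac14}\cdot}_s$.
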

Appealing to Theorem \ref{thm:Close-main} and Corollary \ref{cor:Omega}, which will be proved throughout the paper, we first prove Theorem \ref{thm:CloseCouette}. 

\begin{proof}[Proof of Theorem \ref{thm:CloseCouette}]
	In order to prove \eqref{eq:main1}, first observe that since $$p^{-\frac14}\lesssim \jap{kt}^{-\frac12} \jap{k,\eta}^{\frac12}$$ we get
	\begin{align}
		\norm{q_k(t)}_{L^2}=\norm{Q_k(t)}_{L^2}=\norm{p^{-\frac14}p^{\frac14}Q_k(t)}_{L^2}\lesssim \frac{1}{\jap{kt}^{\frac12}}\norm{p^{\frac14}Q_k(t)}_{\frac12}
	\end{align}
	Hence, appealing to Corollary \ref{cor:Omega} and summing in $k$, we prove the bound for $q$ in \eqref{eq:main1}. For the velocity field, we know that its components in the coordinates driven by the flow read
	\begin{align}
		V^x=-g(Y)(\partial_Y-t\partial_X) \Delta_t^{-1} \Omega, \quad V^y=\partial_X \Delta_t^{-1} \Omega.
	\end{align}
	We begin with the bound on $V^x$. First rewrite $V^x$ as 
	\begin{align}
		V^x&=-(\de_Y-t\de_X)\Delta_t^{-1}\Omega-(g(Y)-1)(\de_Y-t\de_X)\Delta_t^{-1}\Omega\\
		&:= V^{x,1}+V^{x,\eps} 
	\end{align}
	Since $\Delta_t^{-1}=\Delta_L^{-1} T_L$ as in \eqref{def:Deltat-1}, we bound $V^{x,1}$ as follows
	\begin{align}
		\norm{V^{x,1}_k}_{L^2}^2=&\int \frac{(\eta-kt)^2}{p^2}|\widehat{T_L\Omega}_k|^2d\eta \leq \int \frac{1}{p}|\widehat{T_L\Omega}_k|^2d\eta\\
		\label{bd:Vx1}\lesssim& \frac{1}{\jap{kt}}\norm{p^{-\frac14}(T_L\Omega_k)(t)}_{\frac12}^2, 
	\end{align}
	where in the last inequality we have used again $p^{-\frac12}\lesssim \jap{kt}^{-1}\jap{k,\eta}$. Then, thanks to \eqref{def:TL} we know that $T_L=I+T_{\eps}T_L$, where $T_\eps$ is defined in \eqref{def:tildeTL}. Therefore 
	\begin{equation}
		\label{bd:TLOme}
		\norm{p^{-\frac14}T_L\Omega_k(t)}_{\frac12}\leq\norm{p^{-\frac14}\Omega_k(t)}_{\frac12}+\norm{p^{-\frac14}T_{\eps}T_L\Omega_k(t)}_{\frac12} 
	\end{equation}
	Now we would like to absorb the last term in the equation above in the left-hand side, similarly to what was done in the proof of Proposition \ref{prop-delta-t}. However, the multiplier $p^{-\frac14}$ does not commute with $T_\eps$. Hence, we have to use the inequality
	\begin{align}
		p^{-\frac{1}{4}}(t, k, \eta) \lesssim \l \eta-\xi \r^{\frac12} p^{-\frac{1}{4}}(t, k, \xi),
	\end{align}
	see also Lemma \ref{lem-exchange-freq-weights},	to exchange frequencies in the last term of the right-hand side of \eqref{bd:TLOme}. In particular, from the definition of $T_\eps$ in \eqref{def:tildeTL}, since $|\eta-kt|^2 p^{-1}\leq 1$, we deduce
	\begin{align*}
		\norm{p^{-\frac{1}{4}} T_{\eps}T_L \Omega_k}_{\frac{1}{2}}\leq&\norm{p^{-\frac{1}{4}}\left(|\widehat{g^2-1}|*|\widehat{T_L \Omega_k}|\right)}_{\frac12}+\norm{p^{-\frac{1}{4}}\left(|\widehat{b}|*|\widehat{T_L \Omega_k}|\right)}_{\frac12}\\
		\lesssim &\norm{\langle\cdot \rangle |\widehat{g^2-1}|*\langle \cdot \rangle^{\frac12} p^{-\frac{1}{4}}|\widehat{T_L \Omega_k}|}_{L^2}+\norm{\langle\cdot \rangle |\widehat{b}|*\langle\cdot \rangle^{\frac12}p^{-\frac{1}{4}}|\widehat{T_L \Omega_k}|}_{L^2}.
	\end{align*}
	Thanks to Young's convolution inequality, by combining the previous bound with \eqref{bd:TLOme} we infer
	\begin{align}
		\norm{p^{-\frac{1}{4}} T_L \Omega_k}_{\frac{1}{2}} \lesssim \norm{p^{-\frac{1}{4}} \Omega_k}_{\frac{1}{2}} + \eps \norm{p^{-\frac{1}{4}} T_L \Omega_k}_{\frac{1}{2}},
	\end{align}
	namely, for $\eps_0$ small enough 
	\begin{align}
		\label{bd:PTLOmega}
		\norm{p^{-\frac{1}{4}} T_L \Omega_k}_{\frac{1}{2}} \lesssim \norm{p^{-\frac{1}{4}}\Omega_k}_{\frac{1}{2}}.
	\end{align}
	Since $$\norm{V^{x,\eps}_k}_{L^2}\leq \norm{g-1}_{L^\infty}\norm{V^{x,1}_k}_{L^2}\lesssim \eps \norm{V^{x,1}_k}_{L^2},$$ by combining \eqref{bd:Vx1} with \eqref{bd:PTLOmega} and Corollary \ref{cor:Omega}, we get
	\begin{align}
		\norm{V^x_k}_{L^2}\lesssim \frac{1}{\jap{kt}^\frac12}\norm{p^{-\frac14}\Omega_k}_{\frac12} \lesssim \frac{1}{\jap{kt}^{\frac12-\frac{\delta}{2}}}\left(\norm{\Omega(0)_k}_{\frac12}+\norm{Q(0)_k}_{\frac32}\right).
	\end{align}
	Applying the same reasoning for $V^y_k$, we get
	\begin{align}\label{estimate-vClose}
		\|V_k^y(t)\|_{L^2} &\lesssim \frac{1}{\langle kt \rangle^{\frac{3}{2}-\frac{\delta}{2}}} \left(\|\Omega_k(0)\|_{\frac32}+\|Q_k(0)\|_{\frac52}\right).
	\end{align}
	Hence, by summing up in $k$ and defining $\delta_\eps=\delta/2=C_0\eps/2$, the proof of Theorem \ref{thm:CloseCouette} is concluded.
\end{proof}

\subsection{Symmetric variables and the energy functional}

Since \eqref{eq:pertofCouetteTheta}-\eqref{eq:pertofCouetteQ} decouples in the $X$-Fourier variable, we fix a nonzero integer $k$ and define the auxiliary variables 
\begin{equation}\label{eq:Z1Z2again}
Z_1:= m^{-1} p^{-\frac{1}{4}} \Theta_k, \qquad Z_2:=m^{-1} p^\frac{1}{4}i \sqrt{R} Q_k,
\end{equation}
with $m=m(t;k,\eta)$ a positive weight, that will be specified later, such that $m'>0$. The choice made in \eqref{eq:Z1Z2again}, up to the weight $m$ is exactly the one of the Couette case, see \eqref{eq:Z1Z2couette}. In fact, one immediately sees that $Z_1,Z_2$ satisfy
\begin{align}\label{eq:symm-matrix}
\de_t\begin{pmatrix}
Z_1\\
Z_2
\end{pmatrix}
=& \begin{pmatrix}
\displaystyle-\frac14\frac{p'}{p} & -k\sqrt{R}p^{-\frac12}\\
k\sqrt{R}p^{-\frac12} & \displaystyle\frac14\frac{p'}{p}
\end{pmatrix}
\begin{pmatrix}
Z_1\\
Z_2
\end{pmatrix}-\frac{m'}{m}\begin{pmatrix}
Z_1\\
Z_2
\end{pmatrix}+\mathcal{R}(t)\begin{pmatrix}
Z_1\\
Z_2
\end{pmatrix},
\end{align}
where $\mathcal{R}(t)$ will be treated as a perturbation being indeed a matrix of remainders. The error terms which are hidden (up to now) in $\mathcal{R}(t)$ need to be controlled by means of the artificial dissipation introduced by the weight $m$.

Given a real number $s\geq 0$, we define the energy functional as
\begin{align}\label{def:energyfun}
E_s(t)=\frac12\left[\|Z_1(t)\|_s^2+\|Z_2(t)\|_s^2+\frac{1}{2k\sqrt{R}} \Re \l p' p^{-\frac12} Z_1(t),Z_2(t)\r_s\right],
\end{align} 
which is exactly the one used for the Couette case, see \eqref{def:pointwise-functional-Couette}, when integrated in the variable $\eta$.

Now using that $|p'|p^{-\frac12}\leq 2|k|$, we deduce that
\begin{align}\label{eq:estimate-mixed-term}
\frac{1}{2|k|\sqrt{R}} \left| \l p' p^{-\frac12} Z_1,Z_2\r_s \right|\leq\frac{1}{\sqrt{R}}\|Z_1\|_s\|Z_2\|_s\leq\frac{1}{2\sqrt{R}}\left(\|Z_1\|_s^2+\|Z_2\|_s^2\right).
\end{align}
As a consequence,  the functional is coercive whenever $R>1/4$, namely
\begin{align}\label{eq:coercive}
\frac12\left(1-\frac{1}{2\sqrt{R}}\right)\left[\|Z_1\|_s^2+\|Z_2\|_s^2\right]\leq E_s\leq\frac12\left(1+\frac{1}{2\sqrt{R}}\right)\left[\|Z_1\|_s^2+\|Z_2\|_s^2\right].
\end{align}The rest of the paper aims at proving that $t\to E_s(t)$ is non-increasing. 
\begin{lemma}
	The functional $E_s(t)$ satisfies 
	\begin{equation}\label{dtEs1}
	\ddt E_s+\left(1-\frac{1}{2\sqrt{R}}\right)\left[\norm{\sqrt{\frac{m'}{m}}Z_1}_s^2+\norm{\sqrt{\frac{m'}{m}}Z_2}_s^2\right]
	\leq\frac{1}{4|k|\sqrt{R}}\left| \l\left(p' p^{-\frac12}\right)'Z_1,Z_2\r_s \right|+\sum_{i=1}^8\mathcal{R}_i,
	\end{equation}
	where the error terms $\mathcal{R}_i$ are 
	\begin{align}
	\label{def:R1}\mathcal{R}_1&=|k|\left|\l Z_1, m^{-1} p^{-\frac{1}{4}}\left((b-\beta g) \Delta_L^{-1}T_L T_B B_L\Theta\right) \r_s\right|,\\
	\label{def:R2}\mathcal{R}_2&=|k|\sqrt{R}\, \left|\l p^{-\frac{1}{2}}(B_L-1)Z_1, Z_2 \r_s\right|,\\
	\label{def:R3}\mathcal{R}_3&=|k|\sqrt{R}\, \left| \l m^{-1}p^{-\frac{3}{4}}T_{\eps}T_LB_L\Theta, Z_2 \r_s\right|,\\
	\label{def:R4}\mathcal{R}_4&=|k|\sqrt{R}\, \left| \l m^{-1}p^{-\frac{3}{4}}T_LB_\eps T_B B_L\Theta, Z_2 \r_s\right|,\\
	\label{def:R5} \mathcal{R}_5&=\frac{1}{4\sqrt{R}}\left|\l p' p^{-\frac34} m^{-1} \left((b-\beta g)\Delta_L^{-1} T_L T_B B_L\Theta\right),Z_2\r_s\right|,\\
	\label{def:R6}\mathcal{R}_6&=\frac14\left|\l Z_1, \frac{p'}{p} (B_L-1)Z_1\r_s\right|,\\
	\label{def:R7}\mathcal{R}_7&=\frac14\left| \l \frac{p'}{p} Z_1, m^{-1}p^{-\frac14} T_{\eps}T_LB_L\Theta\r_s\right|,\\
	\label{def:R8}\mathcal{R}_8&=\frac14\left| \l  \frac{p'}{p} Z_1, m^{-1}p^{-\frac14} T_LB_{\eps}T_BB_L\Theta\r_s\right|.
	\end{align}
\end{lemma}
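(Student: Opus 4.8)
\emph{Plan of proof.} The strategy mirrors the point-wise computation carried out for the Couette flow in Section \ref{sub:Couettesym} and in the proof of Theorem \ref{thm:Couette-main}, now performed at the level of the $H^s$ inner product and with the extra ghost weight $m$. First I would differentiate the three pieces of $E_s$ in \eqref{def:energyfun} and substitute the symmetrized system \eqref{eq:symm-matrix}, writing $\dt Z_j$ as the sum of the first (skew-plus-diagonal) matrix, the diagonal term $-\frac{m'}{m}Z_j$, and the remainder $\mathcal{R}(t)Z$. For the squared norms this gives
\[
\tfrac12\ddt\|Z_1\|_s^2=-\tfrac14\l\tfrac{p'}{p}Z_1,Z_1\r_s-k\sqrt R\,\Re\l p^{-\frac12}Z_2,Z_1\r_s-\norm{\sqrt{\tfrac{m'}{m}}Z_1}_s^2+\Re\l (\mathcal{R}(t)Z)_1,Z_1\r_s,
\]
and the analogous identity for $\|Z_2\|_s^2$; for the mixed term, the product rule produces the coefficient-derivative term $\l(p'p^{-\frac12})'Z_1,Z_2\r_s$ plus the contributions of $\dt Z_1$ and $\dt Z_2$ through the multiplier $p'p^{-\frac12}$.

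Combining the three pieces with the weights $\tfrac12,\tfrac12,\tfrac1{4k\sqrt R}$ of \eqref{def:energyfun}, two sets of cancellations occur exactly as in the Couette case: the off-diagonal contributions $-k\sqrt R\,\Re\l p^{-\frac12}Z_2,Z_1\r_s$ and $+k\sqrt R\,\Re\l p^{-\frac12}Z_1,Z_2\r_s$ cancel because $\Re\l f,g\r_s=\Re\l g,f\r_s$, and the $\frac{p'}{p}|Z_j|^2$ terms from the squared norms are cancelled by the skew contribution $\frac1{4k\sqrt R}\cdot k\sqrt R\frac{p'}{p}(|Z_1|^2-|Z_2|^2)$ arising from the mixed term, while the $\frac{(p')^2}{p^{3/2}}Z_1\overline{Z_2}$ terms coming from the diagonal entries cancel internally in the mixed term. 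What remains on the right-hand side is: the coefficient-derivative term $\frac1{4k\sqrt R}\Re\l(p'p^{-\frac12})'Z_1,Z_2\r_s$; the dissipation $-\norm{\sqrt{m'/m}Z_1}_s^2-\norm{\sqrt{m'/m}Z_2}_s^2$ together with the cross term $-\frac1{2k\sqrt R}\Re\l\frac{m'}{m}p'p^{-\frac12}Z_1,Z_2\r_s$ produced by the $-\frac{m'}{m}$ entries; and the remainder terms $\Re\l(\mathcal{R}(t)Z)_1,Z_1\r_s$, $\Re\l(\mathcal{R}(t)Z)_2,Z_2\r_s$ and $\frac1{4k\sqrt R}$ times $\Re\l p'p^{-\frac12}(\mathcal{R}(t)Z)_1,Z_2\r_s+\Re\l p'p^{-\frac12}Z_1,(\mathcal{R}(t)Z)_2\r_s$.

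Three sources of inequality then bring this identity to the stated form: using $|p'|p^{-\frac12}\le 2|k|$ and $ab\le\frac12(a^2+b^2)$ pointwise under the integral, the cross term is absorbed into a fraction $\frac1{2\sqrt R}$ of the dissipation, exactly as \eqref{eq:estimate-mixed-term}--\eqref{eq:coercive} do at the level of $E_s$ itself, so that moving the dissipation to the left converts its coefficient $1$ into $1-\frac1{2\sqrt R}$, positive precisely because $R>1/4$; the coefficient-derivative term is kept and bounded by $\frac1{4|k|\sqrt R}|\l(p'p^{-\frac12})'Z_1,Z_2\r_s|$; and the remainder contributions are bounded by their absolute values. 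To recognise these last six pairings as $\mathcal{R}_1,\dots,\mathcal{R}_8$, I would make $\mathcal{R}(t)Z$ explicit via $\Delta_t^{-1}=\Delta_L^{-1}T_L$ and $B_t=T_BB_L$ (Propositions \ref{prop-delta-t}, \ref{prop:Bt}): the first component is $ik\,m^{-1}p^{-\frac14}(b-\beta g)\Delta_L^{-1}T_LT_BB_L\Theta$, and the second, after extracting the Couette-like piece $ik\Delta_L^{-1}\Theta_k$ as the $(2,1)$ entry of the first matrix, reduces to $k\sqrt R\,p^{-\frac12}(B_L-1)Z_1+k\sqrt R\,m^{-1}p^{-\frac34}T_\eps T_LB_L\Theta+k\sqrt R\,m^{-1}p^{-\frac34}T_LB_\eps T_BB_L\Theta$. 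Substituting these, and estimating $|\Re z|\le|z|$ and $|p'|\le 2|k|p^{\frac12}$, yields exactly the list \eqref{def:R1}--\eqref{def:R8}.

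The computation itself presents no genuine obstacle: it is bookkeeping that parallels the Couette identity term by term, and the only point requiring care is tracking precisely which operator identities from Propositions \ref{prop-delta-t} and \ref{prop:Bt} to use in order to land on the exact form of the $\mathcal{R}_i$. The real difficulty is postponed to the subsequent lemmas, where each $\mathcal{R}_i$ must be controlled by the artificial dissipation $\norm{\sqrt{m'/m}Z_1}_s^2+\norm{\sqrt{m'/m}Z_2}_s^2$ (plus a time-integrable remainder) through a suitable choice of $m$; this is where the smallness hypotheses \eqref{eq:assgbclose} on $g-1$, $b$ and the careful tracking of the $\beta$-dependence will enter.
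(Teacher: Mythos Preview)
Your proposal is correct and follows essentially the same approach as the paper: you differentiate the three pieces of $E_s$, identify the same cancellations (the off-diagonal $k\sqrt{R}p^{-1/2}$ terms, the diagonal $\tfrac{p'}{p}$ terms against the mixed-term contributions, and the internal $(p')^2p^{-3/2}$ cancellation), absorb the $\tfrac{m'}{m}$ cross term via $|p'|p^{-1/2}\le 2|k|$ exactly as in \eqref{eq:easyerr}, and expand the remainder using $\Delta_t^{-1}=\Delta_L^{-1}T_L$ and $B_t=T_BB_L$ to arrive at $\mathcal{R}_1,\dots,\mathcal{R}_8$. The only slip is cosmetic: the four remainder pairings you list expand into eight (not six) terms because $(\mathcal{R}(t)Z)_2$ has three pieces, but your subsequent explicit identification of the components shows you have this right.
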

\begin{proof}
	Taking the time-derivative of the functional, from $Z_1$ we have
	\begin{align}
	\label{eq:Z1close1}\frac12\ddt\|Z_1\|^2_s&=-\norm{\sqrt{\frac{m'}{m}}Z_1}^2_s-\frac14 \l Z_1, \frac{p'}{p}Z_1 \r-k\sqrt{R}\,\Re\l Z_1,p^{-\frac{1}{2}}Z_2 \r_s\\
	& \label{Z1close2}\quad +\Re\l Z_1, ik m^{-1} p^{-\frac{1}{4}}\left((b-\beta g) \Delta_L^{-1}T_LT_B B_L\Theta\right) \r_s.
	\end{align}
	About $Z_2$,
	\begin{align}\label{eq:Z2close}
	\frac12\ddt\|Z_2\|_s^2&=-\norm{\sqrt{\frac{m'}{m}}Z_2}_s^2+\frac14 \l Z_2, \frac{p'}{p}Z_2 \r_s+k\sqrt{R}\, \Re \l p^{-\frac12}Z_1, Z_2 \r_s\\
	&\quad+k\sqrt{R}\, \Re \l p^{-\frac{1}{2}}(B_L-1)Z_1, Z_2 \r_s +k\sqrt{R}\, \Re \l m^{-1}p^{-\frac{3}{4}}T_{\eps}T_LB_L\Theta, Z_2 \r_s\notag\\
	&\quad+k\sqrt{R}\, \Re \l m^{-1}p^{-\frac{3}{4}}T_LB_\eps T_B B_L\Theta, Z_2 \r_s.
	\end{align}	
	For the mixed term we have
	\begin{align}\label{eq:mixedclose}
	\ddt \Re \l p' p^{-\frac12} Z_1,Z_2\r_s
	&=\Re \l \left(p' p^{-\frac12}\right)'Z_1,Z_2\r_s -2\l p' p^{-\frac12}  m'  m^{-1} Z_1,Z_2\r_s -k\sqrt{R}\, \l Z_2, \frac{p'}{p}Z_2 \r_s\\
	&\quad +\Re \l ikp' p^{-\frac34} m^{-1} \left((b-\beta g)\Delta_L^{-1}T_LB_t\Theta\right),Z_2\r_s\\
	\label{eq:canc}&\quad+k\sqrt{R}\,  \l Z_1, \frac{p'}{p} Z_1\r_s\\
	&\quad+k\sqrt{R}\,  \Re \l Z_1, \frac{p'}{p} (B_L-1)Z_1\r_s+k\sqrt{R}\, \Re \l \frac{p'}{p} Z_1, m^{-1}p^{-\frac14} T_{\eps}T_LB_L\Theta\r_s\\
	&\quad+k\sqrt{R}\, \Re \l  \frac{p'}{p} Z_1, m^{-1}p^{-\frac14} T_LB_{\eps}T_BB_L\Theta\r_s.
	\end{align}
	Now, notice that the sum of the last term on the right-hand side of \eqref{eq:Z1close1} and the last term on the right-hand side of \eqref{eq:Z2close} is zero. Next, the last term of \eqref{eq:mixedclose} multiplied by $1/(4k\sqrt{R})$ and the second term in the right-hand side of \eqref{eq:Z2close} balance each other, while the sum of  \eqref{eq:canc} multiplied by $1/(4k\sqrt{R})$ and the second term in the right-hand side of \eqref{eq:Z1close1} vanishes.
	Thus, we end up with
	
	\begin{align}
	\ddt E_s(t) & + \norma{\sqrt{\dfrac{m'}{m}} Z_1}{s}^2 + \norma{\sqrt{\dfrac{m'}{m}} Z_2}{s}^2\notag\\
	&\le \dfrac{1}{4|k| \sqrt{R}} \left| \l \left(p' p^{-\frac12}\right)'Z_1,Z_2\r_s\right| + \dfrac{1}{2|k| \sqrt{R}}\left|\l p' p^{-\frac12}  m'  m^{-1} Z_1,Z_2\r_s\right|+\sum_{i=1}^8 \mathcal{R}_i.
	\end{align}	
	This way, using that $|p'p^{-\frac12}|\leq 2|k|$,
	we get
	\begin{align}\label{eq:easyerr}
	\frac{1}{2|k|\sqrt{R}}\left|\l p' p^{-\frac12} m' m^{-1} Z_1,Z_2\r_s\right|
	\leq\frac{1}{2\sqrt{R}}\, \left(\norm{\sqrt{\frac{m'}{m}}Z_1}_s^2+\norm{\sqrt{\frac{m'}{m}}Z_2}_s^2\right),
	\end{align}
	and the proof is concluded.
\end{proof}

\subsection{Choice of weights and their properties}
The crucial observation for a proper choice of the weights is that 
we need an additional (uniformly bounded) weight to control the error terms generated by the various $\mathcal{R}_i$'s. More precisely, as it will be clarified later on, the time-derivative of this new weight has to be comparable with $|p'|/p$. This implies that the decay rates proven in Theorem \ref{thm:CloseCouette} have a small $\delta$-correction compared to those in Couette.
For these reasons, the weight $m$ in the definition of $Z_1$ and $Z_2$ in \eqref{eq:Z1Z2again} now takes the form
\begin{align}
\label{def:mSc}
m=m_1 w^\delta,
\end{align} 
where we recall that $\delta=C_0\eps $. The weight $w$ encodes the decay correction, so that
\begin{equation}\label{def:w}
\dfrac{w'}{w}=\dfrac{1}{4}\dfrac{|p'|}{p}, \qquad w|_{t=0}=1.
\end{equation}
The choice of the weight $m_1$,
\begin{align}\label{def:m1}
\frac{m_1'}{m_1}=C_\beta\dfrac{|k|^2}{p}, \qquad m_1|_{t=0}=1,
\end{align}
where the constant $C_\beta$ is given by 
\begin{equation}
\label{def:Cbeta}
C_\beta = 256\sqrt{R}\left(\frac{2\sqrt{R}}{2\sqrt{R}-1}\right) (1+\beta^2),
\end{equation}
plays a role in controlling the right-hand side of \eqref{dtEs1}. It is worth pointing out that $C_\beta$ blows up as $R\to 1/4$. 
Notice that
\begin{align}\label{eq:sumofweight}
\frac{m'}{m}=\delta\frac{w'}{w}+\frac{m_1'}{m_1}.
\end{align}
Explicitly, for $w$ we have
\begin{align}\label{expression:w}
w(t;k,\eta)=\begin{cases}
\left(\dfrac{k^2+\eta^2}{p(t;k,\eta)}\right)^\frac14, \qquad &t < \dfrac{\eta}{k}, \\
\left(\dfrac{(k^2+\eta^2)p(t;k,\eta)}{k^4}\right)^\frac14, \qquad &t \ge \dfrac{\eta}{k},
\end{cases}
\end{align} 
while 
\begin{align}\label{expression:m_1}
m_1(t;k,\eta)=\exp \left[C_\beta\left(\arctan\left(\frac{\eta}{k}-t\right)-\arctan\left(\frac{\eta}{k}\right)\right)\right].
\end{align}
In particular, the weight $m_1$ is uniformly bounded. 

\begin{remark}
	The weight $m_1$ is standard in the case of incompressible Euler/Navier-Stokes with constant density \cites{BVW16,Zillinger14,Zillinger15}.
\end{remark}

The goal of the remaining part of this section is to prove the following proposition.
\begin{proposition}\label{prop:Esfinal}
	Let $R>1/4$, $\beta \geq 0 $ and  $s\geq 0$ be fixed. There exist $\eps_0\in (0,1)$ with the following property. If
	$\eps\in(0,\eps_0]$  and
	\begin{align}
	\| g-1\|_{s+5}+\| b\|_{s+4}\leq \eps,
	\end{align}
	then
	\begin{align}
	\ddt E_s+\frac14\left(1-\frac{1}{2\sqrt{R}}\right)\left[\norm{\sqrt{\frac{m'}{m}}Z_1}_s^2+\norm{\sqrt{\frac{m'}{m}}Z_2}^2_s\right]\leq0,
	\end{align}
	for every $t\geq 0$. 
\end{proposition}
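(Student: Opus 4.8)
The plan is to estimate the right-hand side of the differential inequality \eqref{dtEs1} by (at most) $\tfrac34\big(1-\tfrac1{2\sqrt R}\big)$ times the dissipation $\|\sqrt{m'/m}\,Z_1\|_s^2+\|\sqrt{m'/m}\,Z_2\|_s^2$, so that the remaining $\tfrac14\big(1-\tfrac1{2\sqrt R}\big)$ yields the claim. By \eqref{eq:sumofweight}, \eqref{def:w} and \eqref{def:m1} the dissipation weight decomposes as $\tfrac{m'}{m}=\delta\tfrac{w'}{w}+\tfrac{m_1'}{m_1}=\tfrac{\delta}{4}\tfrac{|p'|}{p}+C_\beta\tfrac{|k|^2}{p}$, and the two summands play complementary roles. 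Every error term that, after reduction, carries a genuine $|p'|/p$--type symbol together with a power of $\eps$ will be absorbed into $\delta\tfrac{w'}{w}$ -- this is where $\delta=C_0\eps$ with $C_0$ large enters. Every other error term -- in particular those carrying the \emph{non-small} multiplier $b-\beta g=O(\beta)$, or the non-small $\beta$-powers produced by $B_L-1$, $\Im B_L$, $\Re(B_L-1)$ via \eqref{bd:BL}--\eqref{bd:BL-1} -- will be absorbed into $C_\beta\tfrac{|k|^2}{p}$, using both the largeness of the numerical factor in $C_\beta$ and its precise shape \eqref{def:Cbeta}: the factor $1+\beta^2$ neutralises quadratic $\beta$-growth (together with $\tfrac{\beta}{1+\beta^2}\le\tfrac12$ for linear powers), while $\tfrac{2\sqrt R}{2\sqrt R-1}=\big(1-\tfrac1{2\sqrt R}\big)^{-1}$ exactly cancels the coercivity-loss factor appearing on the target's right-hand side. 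As a warm-up, the term involving $(p'p^{-\frac12})'$ in \eqref{dtEs1} is disposed of at once: by \eqref{bd:dtdtp} one has $(p'p^{-\frac12})'=2k^4p^{-\frac32}\ge0$, so Cauchy--Schwarz ($2|\l fZ_1,Z_2\r_s|\le\l fZ_1,Z_1\r_s+\l fZ_2,Z_2\r_s$ for $f\ge0$ pointwise in frequency) and $p\ge k^2$ give $\tfrac{1}{4|k|\sqrt R}\big|\l(p'p^{-\frac12})'Z_1,Z_2\r_s\big|\le\tfrac{1}{2\sqrt R}\tfrac{|k|^2}{p}(\|Z_1\|_s^2+\|Z_2\|_s^2)=\tfrac{1}{2\sqrt R\,C_\beta}\tfrac{m_1'}{m_1}(\|Z_1\|_s^2+\|Z_2\|_s^2)$, a negligible piece of the dissipation.

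The heart of the proof is the estimate of $\mathcal{R}_1,\dots,\mathcal{R}_8$. For each I would proceed as follows: (i) use \eqref{eq:Z1Z2again} to replace every occurrence of $\Theta_k$ by $m\,p^{1/4}Z_1$; (ii) commute the multipliers $p^{\pm 1/4}$, $w^\delta$ and $m_1$ through the non-local operators $T_\eps$, $T_L$, $T_B$, $B_\eps$, $B_L$ and through the multiplication by $b-\beta g$, at the price of controlled Sobolev losses, by means of the frequency-exchange bound $p^{-1/4}(t,k,\eta)\lesssim\l\eta-\xi\r^{1/2}p^{-1/4}(t,k,\xi)$ (already used in the proof of Theorem \ref{thm:CloseCouette}, cf.\ Lemma \ref{lem-exchange-freq-weights}) and its analogue for $w^\delta$, combined with Young's convolution inequality as in the proofs of Propositions \ref{prop-delta-t}--\ref{prop:Bt}, the uniform operator bounds $\|T_L\|,\|T_B\|\le 2$, $\|T_\eps\|\lesssim\eps$, $\|B_\eps\|\lesssim\beta\eps$ proved there, and the pointwise bounds \eqref{bd:BL}--\eqref{bd:BL-1} on $B_L$; (iii) extract the gain $1/p$ from each factor $\Delta_L^{-1}$ -- present in $\mathcal{R}_1,\mathcal{R}_5$, in all $T_\eps$-terms, and inside $B_\eps$ -- and match the surviving symbol against $\tfrac{w'}{w}$ or $\tfrac{|k|^2}{p}$ through elementary inequalities such as $|k|(\eta-kt)^2p^{-3/2}\le2\tfrac{w'}{w}$, $|k||\eta-kt|p^{-3/2}\le\tfrac{|k|^2}{p}$, $\tfrac{|p'|}{p}=4\tfrac{w'}{w}$ and $\tfrac{|k|}{p}\le\tfrac{|k|^2}{p}$. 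The upshot is: $\mathcal{R}_7$ and the $T_\eps^g$-type contributions (notably part of $\mathcal{R}_3$) are $\lesssim\eps\,\tfrac{w'}{w}\big(\|Z_1\|_s^2+\|Z_2\|_s^2\big)$, hence $\le\eps/\delta$ times a constant times the dissipation; $\mathcal{R}_1,\mathcal{R}_2,\mathcal{R}_4,\mathcal{R}_5,\mathcal{R}_6,\mathcal{R}_8$ (and the remaining pieces) are $\lesssim\kappa(\beta,\eps)\,\tfrac{|k|^2}{p}\big(\|Z_1\|_s^2+\|Z_2\|_s^2\big)$ with $\kappa(\beta,\eps)(1+\beta^2)^{-1}$ uniformly bounded, hence $\le\kappa(\beta,\eps)/C_\beta$ times the dissipation. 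The precise count of derivatives lost in (ii) -- roughly $s$ for the $H^s$-pairing, one per Young convolution, plus a few more for the reciprocal weights and the $\Delta_L^{-1}$'s -- is what dictates the regularity budget $\|g-1\|_{s+5}+\|b\|_{s+4}\le\eps$.

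Summing all contributions, the right-hand side of \eqref{dtEs1} is bounded by $\big(\tfrac{c_1}{C_0}+\tfrac{c_2}{C_\beta}\big(1-\tfrac1{2\sqrt R}\big)^{-1}\big)\big(1-\tfrac1{2\sqrt R}\big)\big(\|\sqrt{m'/m}\,Z_1\|_s^2+\|\sqrt{m'/m}\,Z_2\|_s^2\big)$ for constants $c_1,c_2$ depending only on $R$ and $s$. Since $C_\beta$ in \eqref{def:Cbeta} already carries the large numerical factor and the compensating factor $\tfrac{2\sqrt R}{2\sqrt R-1}$, one has $c_2\big(1-\tfrac1{2\sqrt R}\big)^{-1}/C_\beta\le\tfrac14$; then choosing $C_0$ large enough (depending on $R,s$) that $c_1/C_0\le\tfrac14$, and finally $\eps_0$ small enough, gives the asserted inequality.

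The main obstacle is step (ii)--(iii) above. Because the reciprocal weights $m^{-1}$ and $m$ sit on opposite sides of the non-local operators, one cannot simply read off a Fourier symbol; one must genuinely commute the \emph{unbounded} weight $w^\delta$ (and the bounded $m_1$) past $T_\eps,T_L,T_B,B_\eps,B_L$ and past multiplication by $b-\beta g$, keeping simultaneous control of the derivative losses and of the $\beta$-powers so that the latter stay dominated by $C_\beta$. A secondary subtlety is that $b-\beta g$ has size $O(\beta)$, not $O(\eps)$, so in $\mathcal{R}_1$ and $\mathcal{R}_5$ the $1/p$ gain furnished by $\Delta_L^{-1}$ must be retained explicitly and the term absorbed through $C_\beta$ rather than through $\eps$.
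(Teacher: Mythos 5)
Your overall architecture coincides with the paper's: start from the differential inequality \eqref{dtEs1}, decompose the dissipation weight via \eqref{eq:sumofweight} as $\tfrac{m'}{m}=\delta\tfrac{w'}{w}+\tfrac{m_1'}{m_1}$, and absorb the error terms into one of the two pieces --- the $\beta$-sized errors through the large constant $C_\beta$ in $m_1'/m_1$, the $\eps$-sized errors through $\delta=C_0\eps$ with $C_0$ large --- using frequency-exchange estimates of the type proved in Lemma~\ref{lem-exchange-freq-weights} and Lemma~\ref{lem-exchange-m1-w}. The warm-up on the $(p'p^{-1/2})'$ term is correct, and your treatment of $\mathcal{R}_1$, $\mathcal{R}_2$, $\mathcal{R}_5$, $\mathcal{R}_6$, $\mathcal{R}_7$ and the $T_\eps$-part of $\mathcal{R}_3$ is essentially the paper's.

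There is, however, a concrete gap in the classification of $\mathcal{R}_4$ and $\mathcal{R}_8$. You claim these are $\lesssim \kappa(\beta,\eps)\,\tfrac{|k|^2}{p}\big(\|Z_1\|_s^2+\|Z_2\|_s^2\big)$ and hence absorbed through the $m_1'/m_1$ piece alone. This cannot be right: $\mathcal{R}_4$ carries the symbol $|k|p^{-1/2}$ and $\mathcal{R}_8$ carries $|p'|/p$, and since $p\ge k^2$ one has $|k|p^{-1/2}\ge |k|^2/p$ pointwise (they agree only at $t=\eta/k$, with $|k|p^{-1/2}$ decaying like $1/t$ versus $1/t^2$), while $|p'|/p$ similarly dominates $|k|^2/p$ away from the critical time. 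Thus neither symbol is controlled by $\tfrac{|k|^2}{p}$ uniformly, and the $C_\beta$-absorption fails for these two terms regardless of how large $C_\beta$ is. Both terms must instead be matched against the $\delta\tfrac{w'}{w}$ piece of the dissipation --- via \eqref{bd:kp12} for $\mathcal{R}_4$ and via $|p'|/p=4w'/w$ for $\mathcal{R}_8$ --- and the absorption only closes because the operator $B_\eps$ furnishes an explicit factor $\eps$ (cf.\ \eqref{bd:exchangem2}, \eqref{bd:wtildeBt}), giving $\eps/\delta=1/C_0$. This is exactly how the paper handles $\mathcal{R}_4$ and $\mathcal{R}_8$. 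The fix is straightforward (move these two terms to the $\eps/\delta$ bucket alongside $\mathcal{R}_3$, $\mathcal{R}_7$), but as written your step ``$\mathcal{R}_4,\mathcal{R}_8\lesssim\kappa\,|k|^2/p$'' does not hold.
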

Theorem \ref{thm:Close-main} follows from the above proposition (see Section \ref{sub:proofs}).
The proof of this proposition is a consequence of the properties of the chosen weights (see Lemmas and \ref{lem-exchange-freq-weights} and
\ref{lem-exchange-m1-w} below), and the estimates on the various error terms, postponed in the next Section \ref{sub:erroest}. We begin
by computing how much it costs to exchange weights in the various convolutions appearing in the error terms.

\begin{lemma}\label{lem-exchange-freq-weights}
	Let $k\neq0$ and $t\geq 0$ be fixed. For any $\eta,\xi\in\R$ we have 
	\begin{align}
	\label{exchange-p} & p^{-1}(t;k,\eta) \lesssim \langle \eta-\xi \rangle^2 p^{-1}(t;k,\xi),\\
	\label{exchange-p'} & \dfrac{|p'|}{p}(t;k,\eta) \lesssim  \langle {{\eta-\xi}}\rangle^2 \dfrac{|p'|}{p}(t;k,\eta)+|k| \langle {\eta-\xi} \rangle^3 p^{-1}(t;k,\eta),\\
	\label{exchange-m} & m^{-1}(t;k,\eta) \lesssim  \langle \eta-\xi \rangle^{\delta} m^{-1}(t; k, \xi).
	\end{align}
\end{lemma}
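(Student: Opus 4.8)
The plan is to prove each of the three inequalities \eqref{exchange-p}, \eqref{exchange-p'}, \eqref{exchange-m} by elementary manipulations of the explicit Fourier symbols, using the triangle inequality in the frequency variable together with the elementary estimate $\l a+b\r\lesssim \l a\r\l b\r$ for real $a,b$.

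For \eqref{exchange-p}, recall $p(t;k,\eta)=k^2+(\eta-kt)^2$. Writing $\eta-kt=(\xi-kt)+(\eta-\xi)$ and using $(\eta-kt)^2\lesssim (\xi-kt)^2+(\eta-\xi)^2$, one gets $p(t;k,\xi)\gtrsim k^2+(\xi-kt)^2\gtrsim k^2+(\eta-kt)^2-(\eta-\xi)^2$. A cleaner route: since $p(t;k,\xi)\geq \max\{k^2,(\xi-kt)^2\}$ and $(\eta-kt)^2\le 2(\xi-kt)^2+2(\eta-\xi)^2$, we obtain
\begin{align*}
p(t;k,\eta)=k^2+(\eta-kt)^2\le k^2+2(\xi-kt)^2+2(\eta-\xi)^2\le 3\l\eta-\xi\r^2\,p(t;k,\xi),
\end{align*}
since $k^2\le p(t;k,\xi)$, $(\xi-kt)^2\le p(t;k,\xi)$, and $(\eta-\xi)^2\le \l\eta-\xi\r^2\le \l\eta-\xi\r^2 p(t;k,\xi)$ (as $p\ge k^2\ge 1$ for $k\neq0$). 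This gives \eqref{exchange-p}.

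For \eqref{exchange-p'}, use $|p'|=2|k||\eta-kt|$ so $|p'|/p=2|k||\eta-kt|/p$; note the right-hand side of \eqref{exchange-p'} as stated has $\eta$ in both terms on the right, so the inequality is really a bound of $|p'|/p$ at $\eta$ in terms of a mix of $|p'|/p$ and $|k|p^{-1}$ at $\eta$ — I would first double-check whether the intended statement should have $\xi$ on the right, and if so, combine \eqref{exchange-p} with the split $|\eta-kt|\le|\xi-kt|+|\eta-\xi|$ to write $|p'(t;k,\eta)|/p(t;k,\eta)\lesssim \l\eta-\xi\r^2|p'(t;k,\xi)|/p(t;k,\xi)+|k|\l\eta-\xi\r^3 p^{-1}(t;k,\xi)$, the second term absorbing the contribution of $|\eta-\xi|$ via $|k||\eta-\xi|/p(t;k,\eta)\lesssim |k|\l\eta-\xi\r^3 p^{-1}(t;k,\xi)$ after applying \eqref{exchange-p}. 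Either way the mechanism is the same: peel off the difference frequency and pay powers of $\l\eta-\xi\r$.

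For \eqref{exchange-m}, recall from \eqref{def:mSc}–\eqref{expression:m_1} that $m=m_1 w^\delta$ with $m_1$ uniformly bounded above and below by positive constants (since $|\arctan|\le\pi/2$), so $m^{-1}(t;k,\eta)\approx w^{-\delta}(t;k,\eta)$ up to constants depending on $\beta,R$ through $C_\beta$. Then it suffices to show $w^{-1}(t;k,\eta)\lesssim \l\eta-\xi\r\, w^{-1}(t;k,\xi)$ and raise to the power $\delta\in(0,1)$. From \eqref{expression:w}, in the regime $t<\eta/k$ one has $w=((k^2+\eta^2)/p)^{1/4}$ so $w^{-1}=(p/(k^2+\eta^2))^{1/4}$, and the claim reduces to comparing $p(t;k,\eta)/(k^2+\eta^2)$ with $p(t;k,\xi)/(k^2+\xi^2)$; using \eqref{exchange-p} for the numerator and $k^2+\eta^2\gtrsim (k^2+\xi^2)/\l\eta-\xi\r^2$ for the denominator gives the power of $\l\eta-\xi\r$. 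The other regime $t\ge\eta/k$ where $w^{-1}=(k^4/((k^2+\eta^2)p))^{1/4}$ is handled symmetrically, using $p\gtrsim p$-at-$\xi$ $/\l\eta-\xi\r^2$ in the denominator as well. The main (mild) obstacle is bookkeeping the case split in \eqref{expression:w} and checking the comparison is uniform across the crossover time $t=\eta/k$; this is routine since $w$ is continuous and both branches agree there, but it is the only place requiring a little care. I expect no deep difficulty — all three bounds are consequences of the single inequality $\l a+b\r\lesssim \l a\r\l b\r$ applied to the explicit quadratic symbols.
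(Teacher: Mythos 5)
Your proposal is correct and follows essentially the same route as the paper: manipulate the explicit symbols, paying powers of $\langle \eta-\xi\rangle$ via the elementary inequality $\langle a+b\rangle\lesssim\langle a\rangle\langle b\rangle$. A few remarks.

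For \eqref{exchange-p}, your chain shows $p(t;k,\eta)\lesssim\langle\eta-\xi\rangle^2 p(t;k,\xi)$, which after relabeling $\eta\leftrightarrow\xi$ (the argument is symmetric) is equivalent to the stated $p^{-1}(t;k,\eta)\lesssim\langle\eta-\xi\rangle^2 p^{-1}(t;k,\xi)$; the paper divides through by $k^2$ and applies $\langle a\rangle\lesssim\langle a-b\rangle\langle b\rangle$ with $a=\xi/k-t$, $b=\eta/k-t$, which is the same computation. Your observation about \eqref{exchange-p'} is correct: as printed, the right-hand side is evaluated at $\eta$, which makes the inequality trivially off; the paper's own displayed proof produces $\xi$ on the right,
\begin{align}
\dfrac{|p'|}{p}(t;k,\eta) \lesssim \langle\eta-\xi\rangle^2\,\dfrac{|p'|}{p}(t;k,\xi)+|k|\,\langle\eta-\xi\rangle^3\, p^{-1}(t;k,\xi),
\end{align}
and your suggested derivation (split $|\eta-kt|\le|\xi-kt|+|\eta-\xi|$, then invoke \eqref{exchange-p}) is exactly what the paper does. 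For \eqref{exchange-m}, your reduction (uniform two-sided bounds on $m_1$, then compare $w^{-\delta}$ with exponent $\delta\in(0,1)$) again matches the paper. Your flag about the regime split in \eqref{expression:w} is a legitimate concern that the paper glosses over: it only verifies the comparison when $\eta$ and $\xi$ lie in the same branch. In the cross-regime case ($\eta,\xi$ on opposite sides of $kt$) one has the extra information $|\eta-kt|,\,|\xi-kt|\le|\eta-\xi|$, whence $p(t;k,\eta),\,p(t;k,\xi)\le k^2\langle\eta-\xi\rangle^2$; running the estimate then gives a power of $\langle\eta-\xi\rangle$ that may exceed $\delta$ by a fixed multiple, but it remains $O(\delta)$, which is all that enters the downstream bounds (e.g.\ \eqref{bd:commm1}, \eqref{bd:Ttildeg}) under the assumption \eqref{eq:assgbclose}. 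Invoking continuity of $w$ at the crossover, as you suggest, is not by itself a proof; the quantitative statement above is the missing ingredient. So your plan is sound, with the same (benign) loose end that the paper itself leaves implicit.
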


\begin{proof}
	In the course of the proof, we omit the dependency of all the quantities on $k$ and $t$.
	Starting with \eqref{exchange-p}, the inequality to prove
	is equivalent to
	
	\begin{align}
	\langle \dfrac{\xi}{k}-t \rangle^2 \lesssim \langle \dfrac{\eta-\xi}{k}\rangle^2\langle \dfrac{\eta}{k}-t \rangle^2.
	\end{align}
	Choosing $a=\dfrac{\xi}{k}-t$ and $b=\dfrac{\eta}{k}-t$, this follows from the general inequality 
	
	\begin{align}\label{eq-genenal-ineq-ab}
	\langle a \rangle \lesssim  \langle a-b \rangle \langle b \rangle.
	\end{align}
	Turning to \eqref{exchange-p'}, we write
	\begin{align}
	\dfrac{|p'|}{p}(\eta) & = \dfrac{2|\dfrac{\eta}{k}-t|}{ \langle \dfrac{\eta}{k}-t \rangle^2} \le \dfrac{2|\dfrac{\xi}{k}-t|+2|\dfrac{\xi}{k}-\dfrac{\eta}{k}|}{\langle \dfrac{\eta}{k}-t\rangle^2}\\
	& \le \left[2|\dfrac{\xi}{k}-t|+2|\dfrac{\xi}{k}-\dfrac{\eta}{k}|\right] \langle\dfrac{\eta}{k}-\dfrac{\xi}{k}\rangle^2 \dfrac{1}{\langle \dfrac{\xi}{k}-t\rangle^2}\\
	& \le  \left[2|\dfrac{\xi}{k}-t|+2|\dfrac{\xi}{k}-\dfrac{\eta}{k}|\right] \langle\eta-\xi\rangle^2 k^2 p^{-1}(\xi)\\
	& \lesssim \dfrac{|p'|}{p}(\xi) \langle {\eta-\xi}\rangle^2+|k| \langle \eta-\xi \rangle^3 p^{-1}(\xi).
	\end{align}
	We now prove \eqref{exchange-m}. Recalling that
	
	\begin{align}
	m^{-1}=(w^\delta m_1)^{-1}, 
	\end{align}
	where $w, m_1$ are is by \eqref{expression:w}, \eqref{expression:m_1} and, since $m_1$ is uniformly bounded, then
	\begin{align}
	(m_1)^{-1}(\eta) \lesssim (m_1)^{-1}(\xi).
	\end{align}
	Thus we deal with $w^{-\delta}$. We know from \eqref{exchange-p} that
	\begin{align}
	p^{-1}(\xi) \lesssim \langle \eta-\xi \rangle^2 p^{-1}(\eta).
	\end{align} 
	Therefore
	\begin{align}
	p(\eta) \lesssim \langle \eta-\xi \rangle^2 p(\xi).
	\end{align}
	Notice that 
	
	\begin{align}
	\dfrac{p(\eta)}{k^2+\eta^2} \lesssim  \langle \eta-\xi \rangle^4 \dfrac{p(\xi)}{k^2+\xi^2}.
	\end{align}
	Recalling the expression of $w$ in \eqref{expression:w} for $t < \dfrac{\eta}{k}$, it follows that
	\begin{align}
	w^{-\delta}(\eta) \le \langle \eta-\xi \rangle^{\delta} w^{-\delta}(\xi).
	\end{align}
	The complementing case $t \ge \dfrac{\eta}{k}$ is analogous, and the proof is over.
\end{proof}
\begin{remark}l\label{rmk-exp-constant}
	It is worth pointing out that the constant value which is implicitly involved in the right-hand side of \eqref{exchange-m} (through the notation `` $\lesssim$ '' ) is exactly $\e^{2C_\beta}$, up to additional constants (independent of $\eps, \beta$). This exponential term $\e^{2C_\beta}$ comes directly from the explicit expression of the weight $m_1$ in \eqref{expression:m_1}.
\end{remark}
We also need to compute how much it costs to commute $T_L, T_\eps, T_B, B_\eps$ with the weights.
\begin{lemma}\label{lem-exchange-m1-w} Let $R>1/4$, $\beta \geq 0$ and  $s\geq 0$ be fixed. There exist $\eps_0\in (0,1)$ with the following property. If
	$\eps\in(0,\eps_0]$ and
	\begin{align}\label{eq:assgbclose2}
	\| g-1\|_{s+5}+\| b\|_{s+4}\leq \eps, 
	\end{align}
	then, for every smooth function $f$ and every $t\geq 0$, the following estimates hold true
	\begin{align}\label{eq:exchangem1}
	\norm{\sqrt{\dfrac{m_1'}{m_1}} p^{-\frac14}m^{-1}T_{\eps}f}_{s}\lesssim& \ \eps \norm{ \sqrt{\dfrac{m_1'}{m_1}} p^{-\frac14}m^{-1} f }_{s},\\
	\label{bd:exchangem2}\norm{\sqrt{\dfrac{m_1'}{m_1}} p^{-\frac14}m^{-1}B_\eps f}_{s}\lesssim&\ \eps  \norm{\sqrt{\dfrac{m_1'}{m_1}} p^{-\frac14}m^{-1}f}_{s},
	\end{align}
	and
	\begin{align}\label{eq:exchangew}
	\norm{\sqrt{\dfrac{w'}{w}} p^{-\frac14}m^{-1}T_{\eps}f}_{s}\lesssim&    \ \eps\left(\norm{\sqrt{\dfrac{w'}{w}} p^{-\frac14}m^{-1}f}_{s}+\norm{\sqrt{\dfrac{m_1'}{m_1}} p^{-\frac14}m^{-1} f}_{s}\right),
	\\
	\label{bd:wtildeBt}\norm{\sqrt{\dfrac{w'}{w}} p^{-\frac14}m^{-1}B_\eps f}_{s}\lesssim&\ \eps \left(\norm{\sqrt{\dfrac{w'}{w}} p^{-\frac14}m^{-1}f}_{s}+\norm{\sqrt{\dfrac{m_1'}{m_1}} p^{-\frac14}m^{-1} f}_{s}\right). 
	\end{align}
	In addition, the following inequalities hold 
	\begin{align}
	\label{bd:m1TL}\norm{\sqrt{\dfrac{m_1'}{m_1}} p^{-\frac14}m^{-1}T_Lf}_{s}\leq& \   2\norm{ \sqrt{\dfrac{m_1'}{m_1}} p^{-\frac14}m^{-1} f }_{s},\\
	\label{bd:m1Bt}\norm{\sqrt{\dfrac{m_1'}{m_1}} p^{-\frac14}m^{-1}T_B f}_{s}\leq& \ 2  \norm{\sqrt{\dfrac{m_1'}{m_1}} p^{-\frac14}m^{-1}f}_{s},
	\end{align}
	and
	\begin{align}\label{bd:wTL}
	\norm{\sqrt{\dfrac{w'}{w}} p^{-\frac14}m^{-1}T_Lf}_{s}\leq& \ 2  \left( \norm{\sqrt{\dfrac{w'}{w}} p^{-\frac14}m^{-1}f}_{s}+\norm{\sqrt{\dfrac{m_1'}{m_1}} p^{-\frac14}m^{-1} f}_{s}\right),\\
	\label{bd:wBt}\norm{\sqrt{\dfrac{w'}{w}} p^{-\frac14}m^{-1}T_B f}_{s}\leq&  \   2\left( \norm{\sqrt{\dfrac{w'}{w}} p^{-\frac14}m^{-1}f}_{s}+\norm{\sqrt{\dfrac{m_1'}{m_1}} p^{-\frac14}m^{-1} f}_{s}\right). 
	\end{align}
\end{lemma}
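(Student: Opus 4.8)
The plan is to reduce every inequality in the statement to a weighted convolution estimate on the Fourier side in $Y$, and to control these convolutions by combining Young's convolution inequality with the frequency-exchange bounds \eqref{exchange-p}, \eqref{exchange-p'}, \eqref{exchange-m} of Lemma \ref{lem-exchange-freq-weights}. Since $T_L$ and $T_B$ are defined through the Neumann series $T_L=\sum_{n\geq0}T_\eps^n$ and $T_B=\sum_{n\geq0}B_\eps^n$, it is natural to prove the estimates in the order: first the bounds \eqref{eq:exchangem1}, \eqref{eq:exchangew} for $T_\eps$; then \eqref{bd:m1TL}, \eqref{bd:wTL} for $T_L$ by geometric summation; then \eqref{bd:exchangem2}, \eqref{bd:wtildeBt} for $B_\eps$; and finally \eqref{bd:m1Bt}, \eqref{bd:wBt} for $T_B$, again by geometric summation.

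For the $T_\eps$ bounds I would split $T_\eps=T_\eps^g+T_\eps^b$ as in \eqref{eq:Tgandb}: in the $Y$-Fourier variable, $\widehat{T_\eps^g f}(\eta)$ is the convolution of $\widehat{g^2-1}$ against $\tfrac{(\cdot-kt)^2}{p}\widehat f$, and similarly for $T_\eps^b$ with symbol $\tfrac{\cdot-kt}{p}$; both inner symbols are bounded by $1$. Multiplying by the weight $\sqrt{m_1'/m_1}\,p^{-1/4}m^{-1}$ evaluated at $\eta$ and moving it onto $\widehat f(\xi)$ costs, by \eqref{exchange-p}, \eqref{exchange-m} and by the identity $m_1'/m_1=C_\beta k^2 p^{-1}$ (so that $\sqrt{m_1'/m_1}(\eta)\lesssim\langle\eta-\xi\rangle\sqrt{m_1'/m_1}(\xi)$ via \eqref{exchange-p}), only polynomial factors $\langle\eta-\xi\rangle^{O(1)}$, which are absorbed into $\|g^2-1\|_{s+5}$, $\|b\|_{s+4}$ by Young's inequality exactly as in the proof of Proposition \ref{prop-delta-t}. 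Since $\|g^2-1\|_{s+5}\lesssim\|g-1\|_{s+5}$ ($H^{s+5}$ being an algebra and $g-1$ small), the hypothesis \eqref{eq:assgbclose2} produces the factor $\eps$, giving \eqref{eq:exchangem1}. The proof of \eqref{eq:exchangew} is identical, except that moving the weight $\sqrt{w'/w}=\tfrac12\sqrt{|p'|/p}$ through the convolution, via \eqref{exchange-p'}, generates in addition to $\langle\eta-\xi\rangle\sqrt{w'/w}(\xi)$ a term $|k|^{1/2}\langle\eta-\xi\rangle^{3/2}p^{-1/4}(\xi)$; using $|k|\,p^{-1}(\xi)\leq (C_\beta|k|)^{-1}(m_1'/m_1)(\xi)\leq (m_1'/m_1)(\xi)$ this term is dominated by $\langle\eta-\xi\rangle^{3/2}\sqrt{m_1'/m_1}(\xi)$, which is precisely what forces the extra $\|\sqrt{m_1'/m_1}\,p^{-1/4}m^{-1}f\|_s$ on the right-hand side of \eqref{eq:exchangew}.

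With the $T_\eps$ estimates at hand, \eqref{bd:m1TL} follows by iterating \eqref{eq:exchangem1} to get $\|\sqrt{m_1'/m_1}\,p^{-1/4}m^{-1}T_\eps^n f\|_s\leq (c\eps)^n\|\sqrt{m_1'/m_1}\,p^{-1/4}m^{-1}f\|_s$ and summing $\sum_{n\geq0}(c\eps)^n\leq 2$ for $\eps_0$ small; \eqref{bd:wTL} follows from a straightforward induction combining \eqref{eq:exchangew} and \eqref{eq:exchangem1}, which shows that the $\sqrt{m_1'/m_1}$-norm of $f$ enters every term with coefficient $O((c\eps)^n)$ without compounding. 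For $B_\eps$ I would use the rearranged expression \eqref{def:Beps1}: the multiplier $B_L$ has modulus $\leq1$ by \eqref{eq:BLboundbelow} and, like $p$ and all the weights, is a Fourier multiplier in $Y$, so it commutes with them; moreover $\beta(\partial_Y-t\partial_X)\Delta_L^{-1}$ has symbol of modulus $\leq\beta p^{-1/2}\leq\beta$. Hence the first summand of \eqref{def:Beps1} is ``multiplication by $\beta(g-1)$ composed with bounded multipliers'' and is estimated exactly like $T_\eps^b$; the second summand is the same, applied to $T_\eps T_L f$, so one first exchanges the weight through the $(g-1)$-convolution and then uses \eqref{eq:exchangem1}/\eqref{eq:exchangew} followed by \eqref{bd:m1TL}/\eqref{bd:wTL}; the third summand has no $(g-1)$ and its bounded-multiplier part commutes with the weights, so it reduces directly to $\|$weight$\,\cdot\, T_\eps T_L f\|_s$, again estimated by \eqref{eq:exchangem1}/\eqref{eq:exchangew} and \eqref{bd:m1TL}/\eqref{bd:wTL}. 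This yields \eqref{bd:exchangem2}, \eqref{bd:wtildeBt}, and then \eqref{bd:m1Bt}, \eqref{bd:wBt} follow from $T_B=\sum_{n\geq0}B_\eps^n$ exactly as \eqref{bd:m1TL}, \eqref{bd:wTL} followed from the $T_\eps$ bounds.

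I expect the main difficulty to be purely bookkeeping: none of the weights $p^{-1/4}$, $m^{-1}$, $\sqrt{m_1'/m_1}$, $\sqrt{w'/w}$ commutes with multiplication by $g^2-1$ or $b$, so each weight exchange costs a power $\langle\eta-\xi\rangle^{O(1)}$ from Lemma \ref{lem-exchange-freq-weights}, and one must track the worst accumulation of these powers (two from $p^{-1}$, three from $|p'|/p$, $\delta$ from $m^{-1}$, plus the $s$ lost in passing from $L^1$ to $L^2$ of $\widehat{g^2-1}$) --- this is exactly what fixes the regularity budget $\|g-1\|_{s+5}+\|b\|_{s+4}\leq\eps$ in \eqref{eq:assgbclose2}. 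A second, more structural point --- but one already understood above --- is that exchanging the weight $\sqrt{w'/w}=\tfrac12\sqrt{|p'|/p}$ inevitably produces a lower-order term of the type $|k|\,\langle\eta-\xi\rangle^3 p^{-1}$; this is harmless precisely because the weight $m_1$ was defined in \eqref{def:m1} with the large constant $C_\beta$, so that $|k|\,p^{-1}\leq m_1'/m_1$, and it is the reason why the $w$-estimates \eqref{eq:exchangew}, \eqref{bd:wtildeBt}, \eqref{bd:wTL}, \eqref{bd:wBt} must carry the extra $\sqrt{m_1'/m_1}$-term on their right-hand sides. Finally, all implicit constants depend on $\beta$ through the factor $\e^{2C_\beta}$ of Remark \ref{rmk-exp-constant}, consistently with the exponential blow-up as $R\to1/4$ flagged after Theorem \ref{thm:CloseCouette}.
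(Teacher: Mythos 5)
Your proof is correct and follows essentially the same strategy as the paper: split $T_\eps$ into $T_\eps^g+T_\eps^b$, commute the weights through the convolution via Lemma~\ref{lem-exchange-freq-weights} (the key point being that exchanging $\sqrt{w'/w}$ produces, via \eqref{exchange-p'} and $|k|\,p^{-1}\lesssim m_1'/m_1$, the extra $\sqrt{m_1'/m_1}$ term --- note in passing that the extra term you write as $|k|^{1/2}\langle\eta-\xi\rangle^{3/2}p^{-1/4}(\xi)$ should read $p^{-1/2}(\xi)$, a harmless slip), and then apply Young's convolution inequality together with the $L^1$-to-$H^{\cdot+1}$ gain. The only difference is presentational: for $T_L$ and $T_B$ you sum the Neumann series term by term, whereas the paper invokes the fixed-point identities $T_L=I+T_\eps T_L$ and $T_B=I+B_\eps T_B$ of \eqref{def:TL}, \eqref{def:TB} and absorbs the $O(\eps)$ correction into the left-hand side.
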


\begin{proof} We start with $T_L, T_\eps$.	In order to prove \eqref{eq:exchangem1}, we first use the operators $T_{\eps}^g$ and $T_{\eps}^b$ defined in \eqref{eq:Tgandb} and 
	the identity 
	\begin{align}
	T_{\eps}=T_{\eps}^g+T_{\eps}^b
	\end{align} 
	to obtain the bound
	\begin{align}\label{eq:estm1m1}
	\norm{\sqrt{\dfrac{m_1'}{m_1}} p^{-\frac14}m^{-1}T_{\eps}f}_{s}\leq 
	\norm{\sqrt{\dfrac{m_1'}{m_1}} p^{-\frac14}m^{-1}T_{\eps}^gf}_{s}
	+\norm{\sqrt{\dfrac{m_1'}{m_1}} p^{-\frac14}m^{-1}T_{\eps}^bf}_{s}.
	\end{align}
	Now, by Lemma \ref{lem-exchange-freq-weights} we find that
	\begin{align}
	\label{bd:commm1}
	\left(\sqrt{\dfrac{m_1'}{m_1}} p^{-\frac14}m^{-1}\right)(\eta)\lesssim \l\xi-\eta\r^{\frac32+\delta}\left(\sqrt{\dfrac{m_1'}{m_1}} p^{-\frac14}m^{-1}\right)(\xi),
	\end{align}
	where we recall that the hidden constant in the right-hand side of the previous bound \eqref{bd:commm1}  is proportional to $\e^{2C_\beta}$, see Remark \ref{rmk-exp-constant}.
	Therefore,
	\begin{align}
	\norm{\sqrt{\dfrac{m_1'}{m_1}} p^{-\frac14}m^{-1}T_{\eps}^gf}_{s}
	&\leq\norm{\l(k,\cdot)\r^s\sqrt{\dfrac{m_1'}{m_1}} p^{-\frac14}m^{-1}|\widehat{g^2-1}|*|\widehat{f}|}\notag\\
	&\lesssim\norm{\l\cdot\r^{s+\frac32+\delta}|\widehat{g^2-1}|*\l(k,\cdot)\r^s\sqrt{\dfrac{m_1'}{m_1}} p^{-\frac14}m^{-1}|\widehat{f}|}\notag\\
	\label{bd:Ttildeg}&\lesssim \norm{g^2-1}_{s+\frac52+\delta} \norm{\sqrt{\dfrac{m_1'}{m_1}} p^{-\frac14}m^{-1}f}_s.
	\end{align}
	In a similar fashion,
	\begin{align}
	\norm{\sqrt{\dfrac{m_1'}{m_1}} p^{-\frac14}m^{-1}T_{\eps}^bf}_{s}
	&\lesssim \norm{b}_{s+\frac52+\delta}\norm{\sqrt{\dfrac{m_1'}{m_1}} p^{-\frac14}m^{-1}f}_s.
	\end{align}
	Plugging the above two bounds in \eqref{eq:estm1m1} and using \eqref{eq:assgbclose2} we obtain \eqref{eq:exchangem1}.
	Looking at \eqref{eq:exchangew}, we have again that
	\begin{align}
	\norm{\sqrt{\dfrac{w'}{w}} p^{-\frac14}m^{-1}T_{\eps}f}_{s}\leq\norm{\sqrt{\dfrac{w'}{w}} p^{-\frac14}m^{-1}T_{\eps}^gf}_{s}+\norm{\sqrt{\dfrac{w'}{w}} p^{-\frac14}m^{-1}T_{\eps}^bf}_{s}.
	\end{align}
	As above, Lemma \ref{lem-exchange-freq-weights} now gives us
	\begin{align}
	\left(\sqrt{\dfrac{w'}{w}} p^{-\frac14}m^{-1}\right)(\eta)\lesssim \l {{\eta-\xi}}\r^{\frac32+\delta}\left(\sqrt{\dfrac{w'}{w}} p^{-\frac14}m^{-1}\right)(\xi)+ \langle {\eta-\xi} \rangle^{2+\delta} \left(\sqrt{\dfrac{m_1'}{m_1}} p^{-\frac14}m^{-1}\right)(\xi).
	\end{align}
	Therefore,
	\begin{align}
	\norm{\sqrt{\dfrac{w'}{w}} p^{-\frac14}m^{-1}T_{\eps}^gf}_{s}
	&\leq\norm{\l(k,\cdot)\r^s\sqrt{\dfrac{w'}{w}} p^{-\frac14}m^{-1}|\widehat{g^2-1}|*|\widehat{f}|}\notag\\
	&\lesssim\norm{\l\cdot\r^{s+\frac32+\delta}|\widehat{g^2-1}|*\l(k,\cdot)\r^s\sqrt{\dfrac{w'}{w}} p^{-\frac14}m^{-1}|\widehat{f}|}\notag\\
	&\quad+\norm{\l\cdot\r^{s+2+\delta}|\widehat{g^2-1}|*\l(k,\cdot)\r^s\sqrt{\dfrac{m_1'}{m_1}} p^{-\frac14}m^{-1}|\widehat{f}|}\notag\\
	&\lesssim \norm{g^2-1}_{s+3+\delta} \left[\norm{\sqrt{\dfrac{w'}{w}} p^{-\frac14}m^{-1}f}_s+\norm{\sqrt{\dfrac{m_1'}{m_1}} p^{-\frac14}m^{-1}f}_s\right].
	\end{align}
	The part with $T_{\eps}^b$ is similar, so that
	\begin{align}
	\norm{\sqrt{\dfrac{w'}{w}} p^{-\frac14}m^{-1}T_{\eps}^bf}_{s}
	&\lesssim \norm{b}_{s+3+\delta} \left[\norm{\sqrt{\dfrac{w'}{w}} p^{-\frac14}m^{-1}f}_s+\norm{\sqrt{\dfrac{m_1'}{m_1}} p^{-\frac14}m^{-1}f}_s\right].
	\end{align}
	By combining the previous two bounds we prove \eqref{eq:exchangew}. 
	About \eqref{bd:m1TL}, since $T_L=I+T_{\eps}T_L$, see \eqref{def:TL}, by using \eqref{eq:exchangem1} we get 
	\begin{align}
	\norm{\sqrt{\dfrac{m_1'}{m_1}} p^{-\frac14}m^{-1}T_Lf}_{s}\leq \  \norm{ \sqrt{\dfrac{m_1'}{m_1}} p^{-\frac14}m^{-1} f }_{s}+C\eps_0  \norm{\sqrt{\dfrac{m_1'}{m_1}} p^{-\frac14}m^{-1}T_Lf}_{s},
	\end{align}
	where $C$ is properly chosen so that the constant value which is hidden in the right-hand side of \eqref{eq:exchangem1} is compensated. Therefore, for $\eps_0$ small enough
	we prove \eqref{bd:m1TL}.  
	To deal with \eqref{bd:wTL}, we combine \eqref{def:TL} with \eqref{eq:exchangew} to get
	\begin{align}
	\norm{\sqrt{\dfrac{w'}{w}} p^{-\frac14}m^{-1}T_Lf}_{s}& \leq \  \norm{ \sqrt{\dfrac{w'}{w}} p^{-\frac14}m^{-1} f }_{s}
	+C\eps_0\norm{\sqrt{\dfrac{m_1'}{m_1}} p^{-\frac14}m^{-1}T_Lf}_{s}
	\\
	& \quad +C\eps_0\norm{\sqrt{\dfrac{w'}{w}} p^{-\frac14}m^{-1}T_Lf}_{s}.
	\end{align}
	For $\eps_0$ small, we can absorb the last term using the left-hand side. Applying \eqref{bd:m1TL} we end up with \eqref{bd:wTL}. 
	
	Now we deal with $B_\eps$ and $T_B$, starting with  \eqref{bd:exchangem2}. From the definition of $B_\eps$  in \eqref{def:Beps}, since $|B_L|\leq1$ as in \eqref{eq:BLboundbelow}, we get 
	\begin{align}
	\label{bd:Btilde1}\norm{\sqrt{\dfrac{m_1'}{m_1}} p^{-\frac14}m^{-1}B_\eps f}_{s}\lesssim& \norm{\sqrt{\dfrac{m_1'}{m_1}} p^{-\frac14}m^{-1}\left(\widehat{g-1}*\frac{p'}{2k}p^{-1}\widehat{f}\right)}_{s}\\
	\label{bd:Btilde2}&+\norm{\sqrt{\dfrac{m_1'}{m_1}} p^{-\frac14}m^{-1}\frac{p'}{2k}p^{-1}\widehat{T_\eps T_Lf}}_{s}\\
	\label{bd:Btilde3}&+  \norm{\sqrt{\dfrac{m_1'}{m_1}} p^{-\frac14}m^{-1}\left(\widehat{g-1}*\frac{p'}{2k}p^{-1}\widehat{T_\eps T_Lf}\right)}_{s}.
	\end{align}
	Considering the term in the right-hand side of \eqref{bd:Btilde1}, by using \eqref{bd:commm1} we get
	\begin{align}
	 \norm{\sqrt{\dfrac{m_1'}{m_1}} p^{-\frac14}m^{-1}\left(\widehat{g-1}*\frac{p'}{2k}p^{-1}\widehat{f}\right)}_{s}& \lesssim   \norm{\langle \cdot \rangle^{s+\frac32+\delta}|\widehat{g-1}|*\langle(k,\cdot)\rangle \sqrt{\dfrac{m_1'}{m_1}} p^{-\frac14}m^{-1}\widehat{f}}\\
	\label{bd:betag-1} &\lesssim \norm{g-1}_{s+\frac52+\delta}\norm{\sqrt{\dfrac{m_1'}{m_1}} p^{-\frac14}m^{-1}f}_{s}.
	\end{align}
	Thanks to \eqref{eq:exchangem1} and \eqref{bd:m1TL} we obtain 
	\begin{align}
	\label{bd:Btilde2bd}
	\norm{\sqrt{\dfrac{m_1'}{m_1}} p^{-\frac14}m^{-1}\frac{p'}{2k}p^{-1}\widehat{T_\eps T_Lf}}_{s}\lesssim	\ \eps\norm{\sqrt{\dfrac{m_1'}{m_1}} p^{-\frac14}m^{-1}f}_{s}.
	\end{align}
	Proceeding as done to get \eqref{bd:betag-1}, by using the previous bound we also have 
	\begin{equation}
		\label{bd:Btilde3bd}
	\norm{\sqrt{\dfrac{m_1'}{m_1}} p^{-\frac14}m^{-1}\left(\widehat{g-1}*\frac{p'}{2k}p^{-1}\widehat{T_\eps T_Lf}\right)}_{s}\lesssim	\ \eps\norm{\sqrt{\dfrac{m_1'}{m_1}} p^{-\frac14}m^{-1}f}_{s}.
	\end{equation}
	Therefore, combining \eqref{bd:Btilde1} with \eqref{bd:betag-1}, \eqref{bd:Btilde2bd} and \eqref{bd:Btilde3bd} we have that \eqref{bd:exchangem2} holds true.  
	
From \eqref{def:Beps} and the upper bound on $B_L$ in \eqref{eq:BLboundbelow}, it is easy to check that \eqref{bd:wtildeBt} follows by arguing as already done to obtain \eqref{eq:exchangew}. Turning to \eqref{bd:m1Bt}, by using \eqref{eq:NeumBt} we have 
	\begin{equation}
\norm{\sqrt{\dfrac{m_1'}{m_1}} p^{-\frac14}m^{-1}T_B f}_{s}\leq \norm{\sqrt{\dfrac{m_1'}{m_1}} p^{-\frac14}m^{-1}f}_{s}+\norm{\sqrt{\dfrac{m_1'}{m_1}} p^{-\frac14}m^{-1}B_\eps T_Bf}_{s}, 
	\end{equation}
	therefore, since $|B_L|\leq 1$, by using \eqref{bd:exchangem2} we get
	\begin{equation}
	\norm{\sqrt{\dfrac{m_1'}{m_1}} p^{-\frac14}m^{-1}T_Bf}_{s}\leq \norm{\sqrt{\dfrac{m_1'}{m_1}} p^{-\frac14}m^{-1}f}_{s}+C\eps_0\norm{\sqrt{\dfrac{m_1'}{m_1}} p^{-\frac14}m^{-1} T_B f}_{s},
	\end{equation}
	which proves \eqref{bd:m1Bt} choosing $\eps_0$ small enough. The proof of \eqref{bd:wBt} is similar.
\end{proof}

\subsection{Proof of Proposition \ref{prop:Esfinal}}\label{sub:erroest} 
The starting point is the differential inequality \eqref{dtEs1}. 
Exploiting the identity \eqref{bd:dtdtp}, we have that $( p'p^{-\frac{1}{2}})'\lesssim C_\beta^{-1}|k| m'_1/m_1$, hence we get
\begin{align}\label{eq:errm1Close}
\frac{1}{4|k|\sqrt{R}}\left| \l\left(p' p^{-\frac12}\right)'Z_1,Z_2\r_s \right|
\leq \frac{1}{2C_\beta}\left(\norm{\sqrt{\frac{m_1'}{m_1}}Z_1}_s^2+\norm{\sqrt{\frac{m_1'}{m_1}}Z_2}_s^2\right),
\end{align}
where we recall $C_\beta$ is given in \eqref{def:Cbeta}.

It remains to deal with the  $\mathcal{R}_i$'s. The idea is to absorb the estimates of these terms by using the second (positive) term in the left-hand side of \eqref{dtEs1}. These computations are developed below.

\medskip

\noindent\textbf{Estimate on  $\mathcal{R}_1$.}
We want to show that
\begin{align}\label{bd:R1est}
\mathcal{R}_1=|k|\left|\l Z_1, m^{-1} p^{-\frac{1}{4}}\left((b-\beta g) \Delta_L^{-1}T_L T_B B_L\Theta\right) \r_s\right|\leq \frac{1}{32}\left(1-\frac{1}{2\sqrt{R}}\right)\norma{\sqrt{\frac{m'_1}{m_1}} Z_1}{s}^2.
\end{align}
By using that $m_1'/m_1=C_\beta |k|^2/p$ we have
\begin{align}
\label{bd:form1}\mathcal{R}_1\leq& \l |Z_1|, m^{-1}p^{-\frac14}\left((|\widehat{b}|+\beta |\widehat{g-1}|)*|k|p^{-1}|\widehat{T_L T_B B_L\Theta}|\right)\r_s\\
\label{bd:form2}&+\beta \l |Z_1|, m^{-1}p^{-\frac14}|k|p^{-1}|\widehat{T_L T_B B_L\Theta}|\r_s\\
\label{bd:R10}\leq & \dfrac{1}{C_\beta|k|} \l |Z_1|, m^{-1}p^{-\frac14}\left((|\widehat{b}|+\beta |\widehat{g-1}|)*m_1'm_1^{-1}|\widehat{T_L T_B B_L\Theta}|\right)\r_s\\
&+\frac{\beta}{C_\beta|k|} \l |Z_1|, m^{-1}p^{-\frac14}m_1'm_1^{-1}|\widehat{T_L T_B B_L\Theta}|\r_s\\
:=&\mathcal{R}_1^1+\mathcal{R}_1^2.
\end{align}
Appealing to Lemma \ref{lem-exchange-freq-weights}, since $\sqrt{m_1'/m_1}=\sqrt{C_\beta}|k|/\sqrt{p}$ and $\langle (k, \eta) \rangle^s \lesssim \langle \eta-\xi \rangle^s \langle (k, \xi) \rangle^s$, then
\begin{equation}
\left(\langle (k,\eta) \rangle^s  m^{-1}p^{-\frac14}\right)(\eta)\sqrt{\frac{m_1'}{m_1}}(\xi )\lesssim\l\eta-\xi\r^{s+\frac32+\delta}\left(\langle (k,\xi) \rangle^s  m^{-1}p^{-\frac14}\right)(\xi)\sqrt{\frac{m_1'}{m_1}}(\eta ).
\end{equation}
Plugging the previous inequality into \eqref{bd:R10} and using Cauchy-Schwarz, 
\begin{align}
\label{bd:R13}
\mathcal{R}_1^1\lesssim& \norm{\sqrt{\frac{m_1'}{m_1}}Z_1}_s\norm{\l\cdot \r^{s+\frac32+\delta}(|\widehat{b}|+\beta |\widehat{g-1}|)*\langle (k, \cdot ) \rangle^s\sqrt{\frac{m_1'}{m_1}}m^{-1}p^{-\frac14}|\widehat{T_L T_B B_L\Theta}|}_{L^2}.
\end{align}
Applying Young's convolution inequality we have 
\begin{equation}
\mathcal{R}_1^1\lesssim \left(\norm{b}_{s+\frac52+\delta}+ \norm{g-1}_{s+\frac52+\delta}\right)\norm{\sqrt{\frac{m_1'}{m_1}}Z_1}_s\norm{\sqrt{\frac{m_1'}{m_1}}m^{-1}p^{-\frac14}T_L T_B B_L \Theta}_s.
\end{equation}
Using \eqref{bd:m1TL} and \eqref{bd:m1Bt}, since $|B_L|\leq1$, see \eqref{eq:BLboundbelow}, we bound the last term as follows
\begin{align}\label{bd:R11}
\norm{\sqrt{\frac{m_1'}{m_1}}m^{-1}p^{-\frac14}T_L T_B B_L\Theta}_s&\leq2 \norm{\sqrt{\frac{m_1'}{m_1}}m^{-1}p^{-\frac14}T_B B_L\Theta}_s
\leq 4\norm{\sqrt{\frac{m_1'}{m_1}}Z_1}_s.
\end{align}
Thus we get
\begin{equation}
\label{bd:R110}
\mathcal{R}_1^1\lesssim \eps \norm{\sqrt{\frac{m_1'}{m_1}}Z_1}^2_s.
\end{equation}
To bound $\mathcal{R}_1^2$, thanks to \eqref{bd:R11} we get 
\begin{equation}
\label{bd:R12}
\mathcal{R}_1^2\leq \frac{4\beta}{C_\beta}\norm{\sqrt{\frac{m_1'}{m_1}}Z_1}^2_s,
\end{equation} 
therefore, by the choice of $C_\beta$, see \eqref{def:Cbeta}, combining \eqref{bd:R110} with \eqref{bd:R12} and
choosing $\eps_0$ small enough, the proof of \eqref{bd:R1est} is over.

\medskip

\noindent\textbf{Estimate on  $\mathcal{R}_2$.}
We prove that
\begin{equation}
\mathcal{R}_2=|k|\sqrt{R}\, \left|\l p^{-\frac{1}{2}}(B_L-1)Z_1, Z_2 \r_s\right| \leq \label{bd:R2est}\frac{1}{32}\left(1-\frac{1}{2\sqrt{R}}\right)\left(\norma{\sqrt{\frac{m'_1}{m_1}} Z_1}{s}^2+\norma{\sqrt{\frac{m'_1}{m_1}} Z_2}{s}^2\right).
\end{equation}
Recall the bound given in \eqref{bd:BL-1}, namely 
\begin{equation}
\label{bd:BL-1sec4}
|B_L-1|\leq \beta(1+\beta)p^{-\frac12},
\end{equation}  
from the definition of $m_1$, see \eqref{def:m1}, we get 
\begin{align}\label{bd:R24}
\mathcal{R}_2\leq \ \frac{\sqrt{R}\beta(1+\beta)}{C_\beta}\norm{\sqrt{\frac{m_1'}{m_1}}Z_1}_s\norm{\sqrt{\frac{m_1'}{m_1}}Z_2}_s.
\end{align}
Hence \eqref{bd:R2est} follows from the definition of $C_\beta$, see \eqref{def:Cbeta}.

\medskip

\noindent\textbf{Estimate on  $\mathcal{R}_3$.}
We aim at proving that
\begin{equation}\label{bd:R3est}
\mathcal{R}_3=|k|\sqrt{R}\, \left| \l m^{-1}p^{-\frac{3}{4}}T_{\eps}T_LB_L\Theta, Z_2 \r_s\right|\leq \frac{1}{32}\left(1-\frac{1}{2\sqrt{R}}\right)\left(\norma{\sqrt{\frac{m'}{m}} Z_1}{s}^2+\norma{\sqrt{\frac{m'}{m}} Z_2}{s}^2\right).
\end{equation}
First of all, notice that the following inequality holds true
\begin{align}
\label{bd:kp12}
|k|p^{-\frac12}(t;k,\eta)&\lesssim (|p'|p^{-1})(t;k,\eta)+|k|^2p^{-1}(t;k,\eta)\lesssim \dfrac{w'}{w}(t; k, \eta)+\dfrac{m_1'}{m_1}(t; k, \eta).
\end{align}
Therefore, we get 
\begin{equation}
\label{bd:R30}
\mathcal{R}_3\lesssim
 \norm{\sqrt{\frac{w'}{w}}m^{-1}p^{-\frac14}T_\eps T_LB_L\Theta}_s \norm{\sqrt{\frac{w'}{w}}Z_2}_s+\norm{\sqrt{\frac{m_1'}{m_1}}m^{-1}p^{-\frac14}T_\eps T_LB_L\Theta}_s \norm{\sqrt{\frac{m_1'}{m_1}}Z_2}_s.
\end{equation}
By applying \eqref{eq:exchangem1}, \eqref{eq:exchangew} followed by \eqref{bd:m1TL}, \eqref{bd:wTL}, since $|B_L|\leq 1$ we infer
\begin{align}
\mathcal{R}_3\lesssim \eps \left(
\norm{\sqrt{\frac{w'}{w}}Z_1}_s \norm{\sqrt{\frac{w'}{w}}Z_2}_s+\norm{\sqrt{\frac{m_1'}{m_1}}Z_1}_s \norm{\sqrt{\frac{w'}{w}}Z_2}_s+\norm{\sqrt{\frac{m_1'}{m_1}}Z_1}_s \norm{\sqrt{\frac{m_1'}{m_1}}Z_2}_s\right).
\end{align}
Finally, by the identity in \eqref{eq:sumofweight} we have
\begin{equation}
\label{bd:R3}
\mathcal{R}_3\lesssim \frac{\eps}{\delta} \left(
\norm{\sqrt{\frac{m'}{m}}Z_1}_s^2 +\norm{\sqrt{\frac{m'}{m}}Z_2}_s^2\right).
\end{equation}
Since $\delta=C_0\eps$, if we choose  $C_0$ sufficiently big (depending only on $\beta,s,R$) we get \eqref{bd:R3est}.

\medskip

\noindent\textbf{Estimate on $\mathcal{R}_4$.}
The goal is again to show that
\begin{equation}\label{bd:R4est}
\mathcal{R}_4=|k|\sqrt{R}\, \left| \l m^{-1}p^{-\frac{3}{4}}T_LB_\eps T_B B_L\Theta, Z_2 \r_s\right|\leq \frac{1}{32}\left(1-\frac{1}{2\sqrt{R}}\right)\left(\norma{\sqrt{\frac{m'}{m}} Z_1}{s}^2+\norma{\sqrt{\frac{m'}{m}} Z_1}{s}^2\right).
\end{equation}
Applying the reasoning that for $\mathcal{R}_3$ gives \eqref{bd:R30}, here we get
\begin{align}
\label{bd:R40}
\mathcal{R}_4\lesssim
\norm{\sqrt{\frac{w'}{w}}m^{-1}p^{-\frac14}T_LB_\eps T_LB_L\Theta}_s \norm{\sqrt{\frac{w'}{w}}Z_2}_s+\norm{\sqrt{\frac{m_1'}{m_1}}m^{-1}p^{-\frac14}T_LB_\eps T_LB_L\Theta}_s \norm{\sqrt{\frac{m_1'}{m_1}}Z_2}_s.
\end{align}
Therefore, by applying first \eqref{bd:wTL}, \eqref{bd:exchangem2} and again \eqref{bd:wTL}, since $|B_L|\leq 1$  and \eqref{eq:sumofweight} holds true, we get 
\begin{align}
\mathcal{R}_4\lesssim &\ \frac{\eps}{\delta} \left(\norm{\sqrt{\frac{m'}{m}}Z_1}_s^2+\norm{\sqrt{\frac{m'}{m}}Z_2}_s^2\right),
\end{align} 
 The proof of \eqref{bd:R4est} is over.

\medskip

\noindent\textbf{Estimate on $\mathcal{R}_5$.}
We show that
\begin{align}
\mathcal{R}_5=&\frac{1}{4\sqrt{R}}\left|\l p' p^{-\frac34} m^{-1} \left((b-\beta g)\Delta_L^{-1} T_L T_B B_L\Theta\right),Z_2\r_s\right|\\
\label{bd:R5est}\leq& \frac{1}{32}\left(1-\frac{1}{2\sqrt{R}}\right)\left(\norma{\sqrt{\frac{m_1'}{m_1}} Z_1}{s}^2+\norma{\sqrt{\frac{m_1'}{m_1}} Z_1}{s}^2\right).
\end{align}
Since $|p|'\leq 2|k|p^{\frac12}$, by using \eqref{def:Deltat-1} we get 
\begin{align}
\mathcal{R}_5\leq& \frac{1}{2\sqrt{R}}\l m^{-1} p^{-\frac14}  \left((|\widehat{b}|+\beta| \widehat{g-1}|)*|k|p^{-1}|\widehat{T_LT_B B_L\Theta}|\right),|Z_2|\r_s\\
&+\frac{\beta}{2\sqrt{R}}\l m^{-1}p^{-\frac14}|k|p^{-1}|\widehat{T_LT_B B_L\Theta}|,|Z_2|\r_s.
\end{align}
Up to the constant $2\sqrt{R}$, the previous bound has the structure of \eqref{bd:form1}-\eqref{bd:form2} with $Z_1$ replaced by $Z_2$. Thus we repeat the computations performed for $\mathcal{R}_1$ to obtain
\begin{equation}
\mathcal{R}_5\leq \frac{1}{C_\beta}\frac{2\beta+C\eps}{\sqrt{R}} \norm{\sqrt{\frac{m_1'}{m_1}}Z_1}_s\norm{\sqrt{\frac{m_1'}{m_1}}Z_2}_s,
\end{equation}
which implies \eqref{bd:R5est} thanks to the choice of $C_\beta$, see \eqref{def:Cbeta}.

\medskip

\noindent\textbf{Estimate on $\mathcal{R}_6$.}
Since $|p'|\leq |k|p^{\frac12}$, analogously to what was done for the term $\mathcal{R}_2$ we get
\begin{equation}
\label{bd:R6est}
\mathcal{R}_6=\frac14\left|\l Z_1, \frac{p'}{p} (B_L-1)Z_1\r_s\right|\leq  \frac{1}{32}\left(1-\frac{1}{2\sqrt{R}}\right)\norma{\sqrt{\frac{m_1'}{m_1}} Z_1}{s}^2.
\end{equation}

\medskip

\noindent\textbf{Estimate on $\mathcal{R}_7$.}
We want to prove that
\begin{equation}
\label{bd:R7est}
\mathcal{R}_7=\frac14\left| \l \frac{p'}{p} Z_1, m^{-1}p^{-\frac14} T_{\eps}T_LB_L\Theta\r_s\right|\leq \frac{1}{32}\left(1-\frac{1}{2\sqrt{R}}\right)\norma{\sqrt{\frac{m_1'}{m_1}} Z_1}{s}^2.
\end{equation}
Recall the definition of $w'/w$, see \eqref{def:w} to have that
\begin{equation}
\mathcal{R}_7\leq\norm{\sqrt{\frac{w'}{w} }m^{-1}p^{-\frac14}T_\eps T_L B_L\Theta}_s\norm{\sqrt{\frac{w'}{w}}Z_1}_s.
\end{equation}
Consequently, by using \eqref{eq:exchangem1}, \eqref{bd:wTL} with the fact that $|B_L|\leq 1$, we get
\begin{align}
\mathcal{R}_7&\lesssim \eps \left(\norm{\sqrt{\frac{w'}{w}}Z_1}_s^2+\norm{\sqrt{\frac{m_1'}{m_1}}Z_1}_s^2\right)
\lesssim \frac{\eps}{\delta}\norm{\sqrt{\frac{m'}{m}}Z_1}_s^2,
\end{align}
where in the last line we have used \eqref{eq:sumofweight}, hence \eqref{bd:R7est} follows upon choosing $C_0$ sufficiently big.
\medskip

\noindent\textbf{Estimate on $\mathcal{R}_8$.}
The last step is to prove that
\begin{equation}
\label{bd:R8est}
\mathcal{R}_8=\frac14\left| \l  \frac{p'}{p} Z_1, m^{-1}p^{-\frac14} T_LB_{\eps}T_BB_L\Theta\r_s\right|\leq \frac{1}{32}\left(1-\frac{1}{2\sqrt{R}}\right)\norma{\sqrt{\frac{m'}{m}} Z_1}{s}^2.
\end{equation}
Using  \eqref{bd:wTL} and \eqref{bd:exchangem2} we obtain that 
\begin{equation}
\mathcal{R}_8\lesssim \eps \left(\norm{\sqrt{\frac{w'}{w}}m^{-1}p^{-\frac14}T_BB_L\Theta}_s +\norm{\sqrt{\frac{m_1'}{m_1}}m^{-1}p^{-\frac14}T_BB_L\Theta}_s\right) \norm{\sqrt{\frac{w'}{w}}Z_1}_s.
\end{equation}
Then, thanks to \eqref{bd:wBt}, \eqref{bd:m1Bt} and $|B_L|\leq 1$, we get
we get
\begin{align}
\mathcal{R}_8&\lesssim \eps \left(\norm{\sqrt{\frac{w'}{w}}Z_1}_s^2+\norm{\sqrt{\frac{m_1'}{m_1}}Z_1}_s^2\right)\lesssim  \frac{\eps}{\delta}\norm{\sqrt{\frac{m'}{m}}Z_1}_s^2,
\end{align}
where in the last line we have used \eqref{eq:sumofweight}.
This proves \eqref{bd:R8est}.

\subsection{Proof of Theorem \ref{thm:Close-main} and Corollary \ref{cor:Omega}}\label{sub:proofs}
Having at hand Proposition \eqref{prop:Esfinal}, we can now prove Theorem \ref{thm:Close-main} and Corollary \ref{cor:Omega}.
\begin{proof}[Proof of Theorem \ref{thm:Close-main}]
	By means of Proposition \ref{prop:Esfinal}, we know that
	\begin{equation}
		\norm{Z_1(t)}_{s}^2+\norm{Z_2(t)}_s^2\lesssim\norm{Z_1(0)}_{s}^2+\norm{Z_2(0)}_s^2. 
	\end{equation}
	Then, from the explicit definition of $w$, see \eqref{expression:w}, we deduce 
	\begin{equation}
		w\lesssim  \jap{t}^\frac12 \jap{k,\eta}.
	\end{equation}
	Hence, recalling the definitions of $Z_1, Z_2$ given in \eqref{eq:Z1Z2again} and since $m=w^{-\delta}m_1^{-1}$, being $m_1$ uniformly bounded, we have that
	\begin{align}
		\norm{p^{-\frac14}\Theta_k(t)}^2_s+\norm{p^{\frac14}Q_k(t)}^2_s=&\ \norm{m_1w^{\delta}Z_1(t)}^2_s+\norm{m_1w^{\delta}Z_2(t)}^2_s\\
		\lesssim&\ \jap{t}^{\delta}\left(\norm{Z_1(t)}^2_{s+\delta}+\norm{Z_2(t)}^2_{s+\delta}\right)\\
		\lesssim&\ \jap{t}^{\delta}\left(\norm{Z_1(0)}^2_{s+\delta}+\norm{Z_2(0)}^2_{s+\delta}\right)
	\end{align}
	where the constant hidden in the last inequalities depends on the uniform bound on $m_1$. Exploiting the definition of $Z_1(0), Z_2(0)$, by the rough bound $\delta \leq \frac12$ the proof of Theorem \ref{thm:Close-main} is over.
\end{proof}

\begin{proof}[Proof of Corollary \ref{cor:Omega}]
	It is enough to prove that
	\begin{align}
		\label{bd:pfCorOm}\norm{p^{-\frac14}\Omega(t)}_s&\lesssim \norm{p^{-\frac14}\Theta(t)}_s,\\
		\label{bd:pfCorTh0}\norm{\Theta(0)}_s&\lesssim\norm{\Omega(0)}_s.
	\end{align}
	Since $\Omega=B_t\Theta=T_BB_L \Theta$, thanks to \eqref{def:TB} and $|B_L|\leq1$ we have 
	\begin{equation}
		\label{bd:pfCor0}
		\norm{p^{-\frac14}\Omega(t)}_s\leq \norm{p^{-\frac14}\Theta(t)}_s+\norm{p^{-\frac14}B_\eps T_BB_L\Omega(t)}_s.
	\end{equation}
	To treat the second term, we  argue as in  \eqref{bd:exchangem2} and \eqref{bd:m1Bt} and commute $p^{-\frac14}$ with $B_\eps T_B$.
	We therefore obtain
	\begin{align}
		\norm{p^{-\frac14}B_\eps T_BB_L\Omega(t)}_s\lesssim& \ \eps \norm{p^{-\frac14}\Omega(t)}_s .
	\end{align}
	As $\eps_0$ is small enough, we can absorb the last term in the left-hand side of \eqref{bd:pfCor0}, hence obtaining \eqref{bd:pfCorOm}. The proof of \eqref{bd:pfCorTh0}  simply comes from the fact that $\Theta(0)=(I-\beta \dy\Delta^{-1})\Omega(0)$. 
\end{proof}

\subsection*{Acknowledgements}
We would like to thank Professor T. Gallay for invaluable comments that improved the manuscript.
The work of MCZ was partially supported by the Royal Society through a 
University Research Fellowship (URF\textbackslash R1\textbackslash 191492). This project has received funding from the European Research Council (ERC) under the European Union's Horizon 2020 research and innovation program Grant agreement No 637653, project BLOC ``Mathematical Study of Boundary Layers in Oceanic Motion’’. RB was partially supported by the SingFlows project, grant ANR-18-CE40-0027 of the French National Research Agency (ANR) and from the GNAMPA group of INdAM (GNAMPA project 2019, INdAM, Italy). MD was partially supported from the GNAMPA group of
INdAM (GNAMPA project 2019, INdAM, Italy).

\bibliographystyle{abbrv}
\bibliography{biblio}

\end{document}